\documentclass[11pt,letterpaper]{amsart}
\usepackage{mathtools}
\usepackage{mathdots}
\usepackage{amssymb}
\usepackage{mathrsfs}
\usepackage[pagebackref,colorlinks=true,linkcolor=blue,citecolor=blue]{hyperref}
\usepackage{tikz-cd}

\tikzset{
  symbol/.style={
    draw=none,
    every to/.append style={
      edge node={node [sloped, allow upside down, auto=false]{$#1$}}}
  }
}

\newcommand{\Z}{\mathbb{Z}}

\newcommand{\R}{\mathbb{R}}
\newcommand{\BC}{\mathbb{C}}

\newcommand{\std}{{\mathrm{std}}}

\newcommand{\SL}{\mathrm{SL}}
\newcommand{\GL}{\mathrm{GL}}
\newcommand{\SO}{\mathrm{SO}}
\newcommand{\RO}{\mathrm{O}}
\newcommand{\Sp}{\mathrm{Sp}}

\newcommand{\RG}{\mathrm{G}}

\newcommand{\ur}{\mathrm{ur}}

\newcommand{\mfr}[1]{\mathfrak{#1}}

\newcommand{\MSeg}{\underline{\mathrm{MSeg}}}

\newcommand{\half}[1]{\frac{#1}{2}}

\newcommand{\comment}[1]{}

\newcommand{\AZ}{\mathsf{AZ}}
\newcommand{\ABV}{\mathrm{ABV}}

\newtheorem{thm}{Theorem}[section]
\newtheorem{cor}[thm]{Corollary}
\newtheorem{lemma}[thm]{Lemma}
\newtheorem{prop}[thm]{Proposition}
\newtheorem {conj}[thm]{Conjecture}

\newtheorem {ques/conj}[thm]{Question/Conjecture}

\newtheorem{defn}[thm]{Definition}
\newtheorem{remark}[thm]{Remark}

\newtheorem{exmp}[thm]{Example}

\newtheorem{algo}[thm]{Algorithm}

\newtheorem*{globalcond*}{Global Condition}
\newtheorem*{localcond*}{Local Condition}

\newtheorem*{globalconj*}{Global Conjecture}
\newtheorem*{localconj*}{Local Conjecture}

\newtheorem*{nonzero*}{Conjecture on the non-vanishing of the normalized intertwining operators}
\newtheorem*{holo*}{Conjecture on the holomorphicity of the normalized intertwining operators}

\DeclareMathOperator{\Ad}{Ad}

\DeclareMathOperator{\Aut}{Aut}
\DeclareMathOperator{\Hom}{Hom}

\DeclareMathOperator{\End}{End}

\DeclareMathOperator{\id}{id}

\numberwithin{equation}{section}

\let\oldbullet\bullet
\renewcommand{\bullet}{{\vcenter{\hbox{\tiny$\oldbullet$}}}}

\begin{document}

\title[Algorithms on computing the Pyasetskii involution]{Algorithms on the Pyasetskii involution on local Langlands parameters of classical groups}

\author{Alexander Hazeltine}
\address{Department of Mathematics\\
University of Michigan\\
Ann Arbor, MI, 48109, USA}
\email{ahazelti@umich.edu}

\author{Chi-Heng Lo}
\address{Department of Mathematics\\
National University of Singapore\\
119076, Singapore}
\email{{ch\_lo@nus.edu.sg}}

\subjclass[2020]{Primary 11F70, 22E50; Secondary 11F85, 22E55}

\date{\today}

\keywords{Admissible Representations, Local Arthur Packets, Local Arthur Parameters, Closure Ordering}

\begin{abstract}
 We give an algorithm to compute the Pyasetskii involution for $\Sp_{2n}$, $\SO_{2n+1}$ and $\RO_{2n}$. The algorithm is a combination of M{\oe}glin-Waldspurger's algorithm for the Pyasetskii involution for $\GL_n$ (\cite{MW86}) and Lanard-M{\'i}nguez's algorithm for the Aubert-Zelevinsky involution of bad parity representations for classical groups (\cite{LM25}). In particular, we give a geometric interpretation of the bad parity case of Lanard-M{\'i}nguez's algorithm.
\end{abstract}

\maketitle

\section{Introduction}

Let $F$ be a non-Archimedean local field of characteristic zero with Weil group $W_F$. Let $G$ be a connected reductive algebraic group defined and quasi-split over $F$. For simplicity, we do not distinguish between $G$ and its group of $F$-points. Denote by $G^{\vee}$ the complex dual group of $G$, and by ${}^L G$ its Langlands dual group.  
Let $\Pi(G)$ be the set of irreducible smooth representations of $G$, and $\Phi(G)$ the set of $L$-parameters for $G$, i.e.~the set of $G^{\vee}$-conjugacy classes of admissible homomorphisms
\[
\phi: W_F \times \mathrm{SL}_2(\BC) \longrightarrow {}^L G.
\]
To each $L$-parameter $\phi$, we attach an infinitesimal parameter (an $L$-parameter which is trivial on $\textrm{SL}_2(\BC)$)
\[
\lambda_\phi(w) := \phi\!\left(w, 
\begin{pmatrix}
|w|^{1/2} & \\ & |w|^{-1/2}
\end{pmatrix}
\right), \qquad w \in W_F.
\]
We let $G_n$ denote the split $\SO_{2n+1}(F)$, $\Sp_{2n}(F)$ or quasi-split $\RO_{2n}(F)$ and refer to them as classical groups throughout this paper. When $G_n=\RO_{2n}(F)$, we identify its complex dual group as $\RO_{2n}(\BC)$. Two admissible homomorphisms $\phi_1, \phi_2: W_F \times \SL_2(\BC) \to \RO_{2n}(\BC)$ are equivalent if they are conjugate to each other under $\RO_{2n}(\BC)$.

Consider the set $\Phi_{\lambda}(G) := \{\phi \in \Phi(G) \mid \lambda_{\phi} = \lambda\}$, which is finite and naturally in bijection with the set of orbits $V_{\lambda}/H_{\lambda}$. Here, $V_{\lambda}$ denotes the Vogan variety associated to $\lambda$, and $H_{\lambda}$ is the centralizer of $\lambda$, which acts on $V_{\lambda}$ by conjugation. See \S \ref{sec Vogan variety} for details.
The geometric structure of $V_{\lambda}/H_{\lambda}$ endows $\Phi_{\lambda}(G)$ with two key features: a partial order $\geq_C$ given by the closure relation, and an involution $\phi \mapsto \widehat{\phi}$, known as the \emph{Pyasetskii involution}. Both the partial order and the involution play crucial roles in the study of local Arthur packets and ABV-packets. Motivated by their significance, it becomes important to provide explicit algorithms for their computation, particularly in settings where $\Phi_{\lambda}(G)$ admits a clear combinatorial description. In this paper, we focus on the cases when $G$ is either a general linear group or a classical group.

For general linear groups, the local Langlands correspondence gives a bijection between $\Pi(\GL_n(F))$ and $\Phi(\GL_n(F))$ (\cite{Hen00, HT01, Sch13}). In particular, we identify $d$-dimensional irreducible representations of $W_F$ with supercuspidal representations of $\GL_d(F)$. Under this identification, the Zelevinsky classification (\cite[Theorem 6.1]{Zel80}) provides a parametrization of $\Phi(\GL_n(F))$ by multi-segments (see Definition \ref{def segments}). Zelevinsky showed in \cite[Theorem 2.2]{Zel81} that the closure ordering on $\Phi_{\lambda}(\GL_n(F))$ corresponds to the partial ordering on multi-segments defined by the union-intersection relation. In the same work, he conjectured that the Pyasetskii involution for $L$-parameters coincides with the Zelevinsky involution (\cite[\S 9]{Zel80}) for the associated irreducible representations. This conjecture was subsequently proven by M{\oe}glin and Waldspurger in \cite{MW86}, who also provided a combinatorial algorithm on multi-segments that realizes these involutions, now known as the M{\oe}glin-Waldspurger algorithm. Subsequently, Knight and Zelevinsky, in \cite{KZ96}, also established an explicit closed formula for computing Pyasetskii involution, using the theory of flows in network.

For the classical groups $G_n$ under consideration, there exists a standard embedding $\std: {}^L G_n \to \GL_N(F)$ (where $N=2n+1$ if $G_n = \Sp_{2n}(F)$, and $N=2n$ otherwise), which induces an injection on $L$-parameters via $\std_{\ast}: \Phi(G_n) \to \Phi(\GL_N(F))$ by composition (see \cite[Theorem 8.1]{GGP12}). In particular, we may view $\Phi(G_n)$ as a subset of $\Phi(\GL_N(F))$ and parametrize its elements in terms of multi-segments, just as for general linear groups. It is known that for any $\phi_1, \phi_2 \in \Phi_{\lambda}(G)$, we have $\phi_1 \geq_C \phi_2$ if and only if $\std \circ \phi_1 \geq_C \std \circ \phi_2$.
Moreover, the closure order can be described via the \emph{rank triangle} introduced in \cite[\S 3]{CR22}; for technical convenience, we rephrase this as the language of \emph{rank matrices} in \S \ref{sec rank matrix}, highlighting their additivity properties. Note that for $\SO_{2n}(F)$, the map $\std_{\ast}$ is not injective and the criterion for closure ordering does not apply, which is why we instead treat $\RO_{2n}(F)$.

While the structure of the closure ordering in this context is well understood, an explicit algorithm for computing the Pyasetskii involution for these classical groups has not previously appeared in the literature. The main result of this paper is to provide such an algorithm.

\begin{thm}\label{thm main intro}
    Let $\phi$ be an $L$-parameter of $G_n$ that corresponds to a multi-segment $\mfr{m}$. We decompose $\mfr{m}$ as (see Definition \ref{def rho part})
    \begin{align}\label{eq decomp intro}
        \mfr{m}=\bigsqcup_{[\rho]\in R} \mfr{m}_{[\rho]}.
    \end{align}
    Then the Pyasetskii involution $\widehat{\phi}$ is the $L$-parameter corresponding to the multi-segment
    \[ \mfr{m}^{\sharp}= \bigsqcup_{[\rho]\in R} \mfr{m}_{[\rho]}^{\sharp},\]
    where $\mfr{m}_{[\rho]}^{\sharp}$ is given by 
    \begin{itemize}
        \item the M{\oe}glin-Waldspurger algorithm if $[\rho]$ is of good parity or non-selfdual,
        \item the Lanard-M{\'i}nguez algorithm for the Aubert-Zelevinsky involution in the bad parity case (see \cite[\S 5.2]{LM25} or Algorithm \ref{algo LM25}), if $[\rho]$ is of bad parity.
    \end{itemize}
    We refer to \S \ref{sec decomp} for the definition of non-selfdual and to Definition \ref{def gp bp} for good and bad parity.
\end{thm}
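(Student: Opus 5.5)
The strategy is to reduce the computation of $\widehat{\phi}$ to independent computations on each block $\mfr{m}_{[\rho]}$ of the decomposition \eqref{eq decomp intro}, and then to treat each type of block separately.

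\textbf{Step 1: product decomposition.} Writing $\lambda=\lambda_\phi$, the decomposition \eqref{eq decomp intro} corresponds to a decomposition of $\lambda$ into pieces $\lambda_{[\rho]}$ supported on the orbit $[\rho]$ (the cuspidal line of $\rho$ together with that of its dual). Under this decomposition the Vogan variety and the centralizer split as products,
\[ V_\lambda=\prod_{[\rho]\in R} V_{\lambda_{[\rho]}},\qquad H_\lambda=\prod_{[\rho]\in R} H_{\lambda_{[\rho]}}, \]
and $H_\lambda$ acts factorwise. The Pyasetskii involution is defined through the $H_\lambda$-invariant duality pairing between $V_\lambda$ and $V_\lambda^\ast$, via conormal bundles. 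Both this pairing and the conormal bundle split compatibly with the product, so the involution is computed factor by factor. This reduces the statement to a single $[\rho]$.

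\textbf{Step 2: the factor types.} There are three cases.

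\emph{Non-selfdual $[\rho]$.} The factor $H_{\lambda_{[\rho]}}$ is a product of general linear groups $\GL$ acting on $V_{\lambda_{[\rho]}}$. The action is identified, via the half of the parameter carried by $\rho$, with the Vogan variety of $\GL_N$ for the $\rho$-line; the dual half is determined by self-duality. By the Zelevinsky and M{\oe}glin--Waldspurger theorem \cite{MW86}, the involution is then given by the M{\oe}glin--Waldspurger algorithm. One must check that the identification of $V^\ast$ is compatible, i.e.\ that it sends the dual half to the dual multisegment.

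\emph{Good parity $[\rho]$.} The centralizer is a product of orthogonal and symplectic groups together with $\GL$'s, and the space $V_{\lambda_{[\rho]}}$ is a symmetric "quiver" space of type $A$ with involution. Here I would use the rank matrix criterion from \S\ref{sec rank matrix}, namely that $\phi_1\ge_C\phi_2$ if and only if $\std\circ\phi_1\ge_C\std\circ\phi_2$.

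The argument runs through the generic-element characterization: $\widehat{\phi}$ is the unique orbit $O'$ such that the conormal variety of $O$ has $O'$ as the dense projection into $V^\ast$. I would show that for a generic $(x,\xi)\in T^\ast_O$ in the classical setting, $\xi$ remains generic in the $\GL$ conormal variety. Since the $\GL$ conormal variety is irreducible, the symmetric locus is cut out by the involution, and fixed points of an involution on a generic orbit are nonempty, this identifies the answer with the M{\oe}glin--Waldspurger answer. Concretely, I would show that the M{\oe}glin--Waldspurger output of a symmetric multisegment is symmetric and is maximal among symmetric orbits meeting the projection, using the rank matrices.

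\emph{Bad parity $[\rho]$.} Here the self-duality forces the action to be essentially a $\GL$-type action on half the data, but with a folding at the center that changes the combinatorics. I would not redo the geometry directly. Instead, I would use the known compatibility, for bad parity, between the Aubert--Zelevinsky involution on the packets and the Pyasetskii involution: the parameter of the Aubert dual equals $\widehat{\phi}$ in this setting, because bad parity packets are singletons obtained by parabolic induction from $\GL$. Combining this with \cite[\S5.2]{LM25} gives the claim.

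\textbf{Main obstacle.} The hardest step is making the bad parity (and good parity) symmetry argument rigorous: showing that restricting the $\GL$ Pyasetskii dual to the self-dual locus is legitimate, since the fixed locus need not contain the $\GL$-dense orbit. For bad parity I would try first to give a direct geometric identification of $V_{\lambda_{[\rho]}}$ with a $\GL$ Vogan variety of half size, and then compare the Lanard--M{\'i}nguez steps with M{\oe}glin--Waldspurger on the halved data.
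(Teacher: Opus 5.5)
\textbf{Verdict.} Your reduction to a single $[\rho]$ and your non-selfdual case match the paper (Lemma \ref{lem phihat}, Proposition \ref{prop nsd}). Both selfdual cases, however, have genuine gaps.

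\textbf{Good parity.} The principle you invoke is that the involution's fixed locus in the irreducible $\GL$ conormal fiber must meet the $\GL$-generic orbit. This is false in general. In Example \ref{exmp bad parity}, the fiber over the closed orbit is all of $V^\ast_{\lambda_{\GL}}$, and its generic orbit is the transpose of the orbit of $\phi_0=\widehat{\phi_4}$, which misses the classical locus entirely. You give no reason why good parity prevents this; your own ``main obstacle'' is exactly this point. Your concrete plan can deliver two things:
(i) the M{\oe}glin--Waldspurger dual of $\std\circ\phi$ is selfdual, hence equal to some $\std\circ\widetilde{\phi}$ in good parity;
(ii) the one-sided bound $\std\circ\widetilde{\phi}=\widehat{\std\circ\phi}\geq_C\std\circ\widehat{\phi}$ (Corollary \ref{cor hat GL classical}).
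Being maximal among classical orbits meeting the fiber does not show that $\std\circ\widetilde{\phi}$ itself meets it. The missing idea is Lemma \ref{lem involution poset}. By (i) and the injectivity of $\std_\ast$, the map $\phi\mapsto\widetilde{\phi}$ is an involution of the finite poset $\Phi_\lambda(G_n)$, as is $\phi\mapsto\widehat{\phi}$. For two involutions, an inequality $\widetilde{\phi}\geq_C\widehat{\phi}$ holding for every $\phi$ forces equality. This closes the good parity case without proving directly that a generic classical covector is $\GL$-generic.

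\textbf{Bad parity.} Your argument here is circular. There is no known theorem that the $L$-parameter of $\widehat{\pi}$ is $\widehat{\phi}$ for bad parity $\phi$. Combined with Theorem \ref{thm LM25}, that is precisely the content of the statement, and the paper only derives it from the open Conjecture \ref{conj AZ} (Proposition \ref{prop ABV AZ bad}). Singleton packets plus induction from the Siegel Levi do not supply it: the standard modules are generally reducible, and the Arthur-packet compatibility does not cover non-Arthur-type $\phi$.

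\textbf{Your fallback also fails.} $V_{\lambda_{[\rho]}}$ is not a $\GL$ Vogan variety of half size, because the blocks around exponent $0$ carry a symmetric or symplectic form. For $\SO_7$ with $\lambda_{\GL}=(\lvert\cdot\rvert^{1})^{\oplus2}\oplus1^{\oplus2}\oplus(\lvert\cdot\rvert^{-1})^{\oplus2}$, one has $V_\lambda\cong\Hom(\BC^2,\BC^2)$ under $\GL_2\times\Sp_2$, with three orbits, while no $\GL_3$ Vogan variety has three orbits. Moreover, ``M{\oe}glin--Waldspurger on a half, then double'' is not well defined. For $\mfr{m}=\{[0,1],[-1,0],[1,1],[-1,-1]\}$, the halves $\{[0,1],[1,1]\}$ and $\{[0,1],[-1,-1]\}$ give the closed orbit and $\mfr{m}$ itself, respectively.

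\textbf{What the paper does instead.} It adapts the first half of M{\oe}glin--Waldspurger's Proposition II.6 to the form $\langle\cdot,\cdot\rangle$, using the Lanard--M{\'i}nguez chain $\Delta_0\succ\cdots\succ\Delta_r$:
(1) It shows that every $g\in V_\lambda^\ast\cap C(f)$ satisfies $g^{r+1}(\Delta_0(d))=0$ (Proposition \ref{prop g r+1=0}). Bad parity enters through the sign cancellation of paths $\iota,\iota^\vee$ from $i$ to $i^\vee$ in Lemma \ref{lem i i vee}, together with Lemma \ref{lem r+1}.
(2) It conjugates a generic element of the conormal fiber into block form on $U\oplus W^-$.
(3) It inducts on rank to get $r(\widehat{\phi})\le r(\AZ(\phi))$.
(4) It concludes with Lemma \ref{lem involution poset} again, using Theorem \ref{thm LM25} only to know that $\AZ$ is an involution.
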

In particular, this gives a geometric interpretation of the Lanard-M{\'i}nguez algorithm in the bad parity case.

We briefly outline our approach. We first observe that both the Vogan variety $V_{\lambda}$ and the centralizer group $H_{\lambda}$ naturally decompose as products (see \eqref{eq decomp rho classical}), which induces the decomposition in \eqref{eq decomp intro}. As a result, it is enough to compute $\mfr{m}_{[\rho]}^{\sharp}$ for each $[\rho]$ separately (see Lemma \ref{lem phihat}). When $[\rho]$ is non-selfdual, the corresponding Vogan variety is naturally isomorphic to that of a general linear group, so the computation of the Pyasetskii involution reduces to the well-known case done by M{\oe}glin-Waldspurger (see Proposition \ref{prop nsd}).

In the good parity case, the key observation is that the Pyasetskii involution on $\Phi_{\std \circ \lambda}(\GL_N(F))$ preserves the subset corresponding to $\Phi_{\lambda}(G_n)$ (Proposition \ref{prop involution of orbit gp}). We can also compare the closure orderings between $\std \circ \widehat{\phi}$ and $\widehat{\std \circ \phi}$ using rank matrices (Corollary \ref{cor hat GL classical}). Consequently, the desired result follows from a simple yet very effective lemma (see Lemma \ref{lem involution poset}).

\begin{lemma}\label{lem involution poset intro}
Let $(S, \geq)$ be a finite partially ordered set. Suppose there exist two involutions $\iota_1, \iota_2$ on $S$ such that $\iota_1(s) \geq \iota_2(s)$ for all $s \in S$. Then $\iota_1 = \iota_2$.
\end{lemma}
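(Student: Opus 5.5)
Suppose $\iota_1 \neq \iota_2$. We will show that the composite $\sigma := \iota_2 \circ \iota_1$ is then weakly increasing everywhere and strictly increasing somewhere, and derive a contradiction from finiteness.

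First, apply the hypothesis to the element $\iota_1(s)$ in place of $s$. Since $\iota_1$ is an involution, $\iota_1(\iota_1(s)) = s$, so we get
\[ s \geq \iota_2(\iota_1(s)) = \sigma(s) \quad \text{for all } s \in S. \]
Because $\iota_1$ and $\iota_2$ are bijections, $\sigma$ is a permutation of the finite set $S$. Hence it has finite order $k \geq 1$, and $\sigma^k = \id$.

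Next, iterate the inequality $s \geq \sigma(s)$ along the $\sigma$-orbit of $s$:
\[ s \geq \sigma(s) \geq \sigma^2(s) \geq \cdots \geq \sigma^k(s) = s. \]
By antisymmetry of the partial order, every inequality in this chain is an equality. In particular $\sigma(s) = s$, that is, $\iota_2(\iota_1(s)) = s$. Applying $\iota_2$ to both sides gives $\iota_1(s) = \iota_2(s)$ for every $s \in S$. This contradicts $\iota_1 \neq \iota_2$, so in fact $\iota_1 = \iota_2$.

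There is no serious obstacle here. The one point needing care is the substitution $s \mapsto \iota_1(s)$, which turns the comparison of the two involutions into a monotonicity statement about the single permutation $\sigma$. Once that is in place, finiteness and antisymmetry close the argument. Equivalently, one could note that $\sum_s \operatorname{rank}(s)$ for any strictly monotone rank function would strictly decrease under $\sigma$ if $\sigma \neq \id$; the orbit argument above avoids having to choose such a function.
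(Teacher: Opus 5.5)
Your proof is correct. It shares the paper's key first step: substituting $\iota_1(s)$ for $s$ to get $s \geq \iota_2\circ\iota_1(s)$. The two arguments diverge in how they finish.

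The paper argues by minimal counterexample. It takes a minimal $s$ in $S'=\{s \mid \iota_1(s)\neq\iota_2(s)\}$ and notes that $\iota_2\iota_1(s)<s$ strictly, since equality would force $\iota_1(s)=\iota_2(s)$. By minimality $\iota_2\iota_1(s)\notin S'$, and the involution property applied at $\iota_2\iota_1(s)$ then gives a contradiction.

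You instead package everything into the single permutation $\sigma=\iota_2\circ\iota_1$, show it is pointwise non-increasing, and telescope around a cycle using $\sigma^k=\id$. Two small wording points: your opening sentence says ``increasing'' where you mean non-increasing, and the contradiction framing is unnecessary, because your argument proves $\sigma(s)=s$ for every $s$ directly. Your route isolates a clean general fact: a non-increasing permutation of a finite poset is the identity.

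The paper's minimality argument uses only well-foundedness of $(S,\geq)$, so it transfers verbatim to infinite posets with the descending chain condition. Your argument as written uses finiteness through the finite order of $\sigma$. It adapts easily, though: under the descending chain condition, the chain $s\geq\sigma(s)\geq\sigma^2(s)\geq\cdots$ stabilizes, and injectivity of $\sigma$ then forces $\sigma(s)=s$. In the paper's applications $S=\Phi_\lambda(G)$ is finite, so either route suffices.
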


The remaining case is the bad parity case which turns out to be the most intricate. To address this, we adapt the proof of \cite[Proposition II.6]{MW86} from the setting of $\GL_n(F)$ to that of the classical groups $G_n^{\vee}$. Since both the Pyasetskii involution and the Lanard-M{\'i}nguez algorithm are known to be involutions, thanks to Lemma \ref{lem involution poset intro} again, we only need to modify (the simpler) half of the original argument in \cite[Proposition II.6]{MW86}. This modification is done in \S \ref{sec bad}.

Finally, we prove in Proposition \ref{prop ABV AZ bad} that the bad parity case of Theorem \ref{thm main intro} is indeed a consequence of the following expected property of ABV-packets: For any $L$-parameter of $G$, we have
\begin{align}\label{eq ABV AZ intro}
    \Pi_{\widehat{\phi}}^{\ABV}(G)= \{ \widehat{\pi}\ | \ \pi \in \Pi_{\phi}^{\ABV}\}.
\end{align}
See \S \ref{sec AZ} for background about ABV-packets. Since we have proved Theorem \ref{thm main intro} unconditionally, it provides an evidence for \eqref{eq ABV AZ intro}.

Here is an outline of the paper.
We begin by recalling in \S\ref{sec Vogan variety} the definition of the Pyasetskii involution through the viewpoint of Vogan varieties, along with various preliminaries. In \S\ref{sec Reduction of Pyasetskii involution for classical groups}, we provide several reductions in the computation of the Pyasetskii involution. Next, in \S\ref{sec A lemma on involutions on finite partially ordered sets}, we prove an elementary lemma on finite partially ordered sets which allows us to avoid having to generalize certain arguments in \cite[Proposition II.6]{MW86}. In \S\ref{sec AZ}, we recall the representation theoretic analogue of the Pyasetskii involution, the Aubert-Zelevinsky involution. Then we prove the computation of the Pyasetskii involution in the good parity case in \S\ref{sec The good parity case}. We complete the computation of the Pyasetskii involution by giving an algorithm for the bad parity case in \S\ref{sec bad}. Lastly, we discuss how the computation in the bad parity case is evidence for the conjectural Equation \eqref{eq ABV AZ intro} in \S\ref{sec ABV}.

\subsection*{Acknowledgements} 
The authors would like to thank Baiying Liu and Qing Zhang for the constant support and encouragement.

\section{Vogan variety}\label{sec Vogan variety}

Throughout this section, we let $\RG$ be a connected reductive group defined and quasi-split over a non-Archimedean local field $F$ of characteristic 0, although we later allow for $\RG$ to be an even orthogonal group which is defined and quasi-split over $F$ (see \S\ref{sec even orthog}). Set $G = \RG(F)$, and recall that the set of $L$-parameters for $G$ is denoted by $\Phi(G)$.

For any $\phi \in \Phi(G)$, there is an associated homomorphism
\[
\lambda_{\phi} : W_F \to \RG^\vee(\mathbb{C})
\]
given by
\[
\lambda_{\phi}(w) := \phi\left(w, \begin{pmatrix}
|w|^{\frac{1}{2}} & \\ & |w|^{-\frac{1}{2}}
\end{pmatrix}\right),
\]
which is called an \emph{infinitesimal parameter} of $G$; that is, a continuous map whose image consists of semisimple elements (see \cite[Section 4.1]{CFMMX22}). Conversely, for a given infinitesimal parameter $\lambda$, we let
\[
\Phi_{\lambda} (G):= \{ \phi \in \Phi(G) \mid \lambda_{\phi} = \lambda \}.
\]
Following \cite{Vog93}, we consider the \emph{Vogan variety} $V_{\lambda}$ for each infinitesimal parameter $\lambda$
\[ V_{\lambda}:= \{ x \in \mathfrak{g}^\vee \ | \ \Ad(\lambda(w))x= |w|x, \forall w \in W_F\}, \]
where $\mathfrak{g}^\vee$ is the Lie algebra of $\RG^\vee(\mathbb{C})$. The vector space $V_{\lambda}$ admits an action of the centralizer group
\[ H_{\lambda}:= \{g \in \RG^\vee(\BC) \ | \ \lambda(w)g= g\lambda(w), \forall w \in W_F \}\]
with finitely many orbits (see \cite[Proposition 5.6]{CFMMX22}),
and for each $\phi \in \Phi_{\lambda}(G)$, the element
\[ X_{\phi}:= d (\phi|_{\SL_2(\BC)}) \left(  \begin{pmatrix}
0&1\\0&0
\end{pmatrix}\right)\]
is in $V_{\lambda}$. Let $C_{\phi}$ denote the $H_{\lambda}$-orbit of $X_{\phi}$. Then we obtain a map
\begin{align*}
     \Phi_{\lambda}(G)& \to V_{\lambda}/H_{\lambda},\\
     \phi& \mapsto C_{\phi},
\end{align*}
which is in fact a bijection (see \cite[Proposition 4.2]{CFMMX22}).

\subsection{Even Orthogonal Groups}\label{sec even orthog}

The results in \cite{CFMMX22} are stated for connected reductive groups. For our purposes, we require analogous results for even orthogonal groups which we explicate in this subsection.

Let $\mathrm{G}_n=\mathrm{O}_{2n}$ denote an even orthogonal group which is defined and quasi-split over $F$. Fix an element $c\in \RG_n\setminus\mathrm{G}_n^0$ defined over $F$ with $c^{-1}=c$, where $\mathrm{G}_n^0=\SO_{2n}$ is the connected component of the identity, and consider the outer automorphism $\theta_c:\SO_{2n}\rightarrow\SO_{2n}$ defined by $\theta_c(g)=cgc^{-1}$ for any $g\in\SO_{2n}.$ Since $\theta_c$ preserves a pinning, there exists a dual automorphism $\widehat{\theta}_c:\SO^\vee_{2n}(\BC)\rightarrow{\SO}^\vee_{2n}(\BC).$ We set $\mathrm{O}^\vee_{2n}(\BC):=\SO^\vee_{2n}(\BC)\rtimes\langle\widehat{\theta}_c\rangle\cong\mathrm{O}_{2n}(\BC).$ 

From the definition, we have that the Lie algebra $\mathfrak{o}_{2n}$ of $\mathrm{O}_{2n}$ is the Lie algebra of $\SO_{2n}.$ We define the set of $L$-parameters of $\mathrm{O}_{2n}$, denoted $\Phi(\mathrm{O}_{2n}),$ to be the set of $L$-parameters of $\SO_{2n}.$ However, we say that $\phi_1,\phi_2\in\Phi(\mathrm{O}_{2n})$ are equivalent if they are conjugate in $\mathrm{O}^\vee_{2n}(\BC).$ Similarly, an infinitesimal parameter of $\mathrm{O}_{2n}$ is an infinitesimal parameter of $\SO_{2n}$ (with equivalence again taken to be conjugation in $\mathrm{O}^\vee_{2n}(\BC)$). We note that since $c$ is defined over $F,$ the Galois group acts trivially on $\langle \theta_c \rangle$.

With the above setup, the definitions in the previous section generalize to $\mathrm{O}_{2n}.$ That is, the \emph{Vogan variety} $V_{\lambda}$ for each infinitesimal parameter $\lambda$ of $\mathrm{O}_{2n}$ is given by ($\mathfrak{o}_{2n}=\mathfrak{so}_{2n}$)
\[ V_{\lambda}:= \{ x \in \mathfrak{o}^\vee_{2n} \ | \ \Ad(\lambda(w))x= |w|x, \forall w \in W_F\}. \]
Again, $V_{\lambda}$ admits an action of the group
\[ H_{\lambda}:= \{g \in \mathrm{O}^\vee_{2n}(\BC) \ | \ \lambda(w)g= g\lambda(w), \forall w \in W_F \}.\]

We let $H_\lambda^\mathrm{SO}=\{g \in \mathrm{SO}^\vee_{2n}(\BC) \ | \ \lambda(w)g= g\lambda(w), \forall w \in W_F \}.$  Since $V_\lambda / H_\lambda^\mathrm{SO}$ has finitely many orbits (see \cite[Proposition 5.6]{CFMMX22}), so does $V_\lambda/H_\lambda.$

Again, for each $\phi \in \Phi_{\lambda}(\mathrm{O}_{2n})=\{\phi\in\Phi(\mathrm{O}_{2n}) \ | \ \lambda_\phi=\lambda\}$, the element
\[ X_{\phi}:= d (\phi|_{\SL_2(\BC)}) \left(  \begin{pmatrix}
0&1\\0&0
\end{pmatrix}\right)\]
is in $V_{\lambda}$. Let $C_{\phi}$ denote the $H_{\lambda}$-orbit of $X_{\phi}$. 

\begin{lemma}\label{lemma even orthog L-par orbit bijection}
We have a bijection
\begin{align*}
    \Phi_{\lambda}(\mathrm{O}_{2n})& \to V_{\lambda}/H_{\lambda},\\
     \phi& \mapsto C_{\phi}.
\end{align*}
It is important to note that we regard $\Phi_{\lambda}(\mathrm{O}_{2n})$ as a set modulo the above equivalence here.
\end{lemma}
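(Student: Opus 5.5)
We will deduce the bijection from the corresponding statement for the connected group $\SO_{2n}$, namely \cite[Proposition 4.2]{CFMMX22}. That result says that for an infinitesimal parameter $\lambda$ of $\SO_{2n}$, the map $\phi\mapsto C^{\SO}_\phi$ is a bijection from $\SO^\vee_{2n}(\BC)$-conjugacy classes of $L$-parameters $\phi$ with $\lambda_\phi=\lambda$ onto $V_\lambda/H^{\SO}_\lambda$. The plan is to pass from $\SO^\vee_{2n}(\BC)$-conjugacy to $\mathrm{O}^\vee_{2n}(\BC)$-conjugacy on both sides and check that the two quotients match.

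\textbf{Well-definedness.} Let $\phi$ be an $L$-parameter of $\SO_{2n}$, so $\lambda_\phi$ is $\mathrm{O}^\vee_{2n}(\BC)$-conjugate to $\lambda$. After conjugating $\phi$ by an element of $\mathrm{O}^\vee_{2n}(\BC)$ we may assume $\lambda_\phi=\lambda$ on the nose. Then $X_\phi\in V_\lambda$. The representative is unique up to conjugation by $g\in\mathrm{O}^\vee_{2n}(\BC)$ with $g\lambda g^{-1}=\lambda$, that is, by $g\in H_\lambda$. Since $X_{g\phi g^{-1}}=\Ad(g)X_\phi$, the $H_\lambda$-orbit $C_\phi$ depends only on the $\mathrm{O}^\vee_{2n}(\BC)$-class of $\phi$.

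\textbf{Surjectivity.} This follows immediately from the $\SO$ case: every $H^{\SO}_\lambda$-orbit in $V_\lambda$ is $C^{\SO}_\phi$ for some $\phi$, and $H_\lambda$-orbits are unions of $H^{\SO}_\lambda$-orbits.

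\textbf{Injectivity.} Suppose $\lambda_{\phi_1}=\lambda_{\phi_2}=\lambda$ and $X_{\phi_2}=\Ad(h)X_{\phi_1}$ for some $h\in H_\lambda$.
\begin{itemize}
\item If $h\in H^{\SO}_\lambda$, then the $\SO$ result shows that $\phi_1$ and $\phi_2$ are $\SO^\vee_{2n}(\BC)$-conjugate.
\item If $h\notin\SO^\vee_{2n}(\BC)$, put $\phi_1':=h\phi_1h^{-1}$. This is again an admissible homomorphism into ${}^L\SO_{2n}$, since conjugation by the outer element $\widehat\theta_c$ preserves the $L$-group structure. We need to know that $\widehat\theta_c$ commutes with the Galois action. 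This holds because the Galois group acts trivially on $\langle\theta_c\rangle$, as noted just before the lemma, so $\mathrm{O}^\vee_{2n}(\BC)$ acts on ${}^L\SO_{2n}$ compatibly with the projection to $W_F$. We then have $\lambda_{\phi_1'}=h\lambda h^{-1}=\lambda$ and $X_{\phi_1'}=\Ad(h)X_{\phi_1}=X_{\phi_2}$. The $\SO$ case gives that $\phi_1'$ and $\phi_2$ are $\SO^\vee_{2n}(\BC)$-conjugate, hence $\phi_1$ and $\phi_2$ are $\mathrm{O}^\vee_{2n}(\BC)$-conjugate.
\end{itemize}

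The main obstacle is making precise that the $L$-group formalism of \cite{CFMMX22} is stable under $\widehat\theta_c$. Concretely, one must check that conjugation by an element of $\mathrm{O}^\vee_{2n}(\BC)$ sends $L$-parameters to $L$-parameters and infinitesimal parameters to infinitesimal parameters, and that it intertwines $\phi\mapsto\lambda_\phi$ and $\phi\mapsto X_\phi$. Both maps are defined functorially by composition and differentiation, so equivariance is formal once $\widehat\theta_c$ is known to be an $L$-automorphism fixing $W_F$. The fact that $\widehat\theta_c$ preserves a pinning and commutes with the Galois action is what guarantees this, and we would spell it out first.
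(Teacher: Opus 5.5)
Your proposal is correct and follows essentially the same route as the paper. Surjectivity comes from the connected case, since $H_\lambda$-orbits are unions of $H_\lambda^{\mathrm{SO}}$-orbits. Injectivity splits according to whether the conjugating element $h\in H_\lambda$ lies in $H_\lambda^{\mathrm{SO}}$; in the second case you replace $\phi_1$ by $\mathrm{Int}(g)\circ\widehat{\theta}_c\circ\phi_1$ and apply the connected case again. Your explicit well-definedness check and your remark that $\widehat{\theta}_c$ commutes with the Galois action spell out points the paper leaves implicit.
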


\begin{proof}

From the connected case (\cite[Proposition 5.6]{CFMMX22}), we have that the map is surjective. Thus it suffices to show that if $C_\phi=C_{\phi'}$ for some $\phi,\phi'\in \Phi_{\lambda}(\mathrm{O}_{2n}),$ then $\phi$ and $\phi'$ are equivalent as $L$-parameters of $\mathrm{O}_{2n}.$ Suppose then that $C_\phi=C_{\phi'}.$ 

If $hX_\phi\in C_{\phi'}$ for some $h\in H_\lambda^\mathrm{SO}$, then the $H_\lambda^\mathrm{SO}$-orbits of $X_\phi$ and $X_{\phi'}$ coincide. By the connected case (\cite[Proposition 5.6]{CFMMX22}), it follows that $\phi$ and $\phi'$ are equivalent as $L$-parameters of $\mathrm{SO}_{2n}$ and therefore, they are also equivalent as $L$-parameters of $\mathrm{O}_{2n}$. This verifies the desired conclusion in this case.

Suppose then that $hX_\phi\in C_{\phi'}$ for some $h\in H_\lambda\setminus H_\lambda^\mathrm{SO}.$ Then $h=g\rtimes \widehat{\theta}_c$ for some $g\in\SO^\vee_{2n}(\BC).$ Since $h\in H_\lambda$, we have $g\widehat{\theta}_c(\lambda(w))g^{-1}=\lambda(w)$
for any $w\in W_F.$ We obtain that $\mathrm{Int}(g)\circ\widehat{\theta}_c\circ\phi\in \Phi_{\lambda}(\mathrm{O}_{2n})$, where $\mathrm{Int}(g)$ denotes the inner conjugation. From the connected case (\cite[Proposition 5.6]{CFMMX22}), it follows that $\phi'$ and $\mathrm{Int}(g)\circ\widehat{\theta}_c\circ\phi$ are equivalent as $L$-parameters of $\mathrm{SO}_{2n}.$ Therefore, we have that $\phi$ and $\phi'$ are equivalent as $L$-parameters of $\mathrm{O}_{2n}.$
\end{proof}



\subsection{Pyasetskii involution} Let $\RG$ be a connected reductive group (or even orthogonal group) that is defined and quasi-split over $F$ and $G=\RG(F).$
Let $\lambda$ be an infinitesimal parameter of $G$. Consider the dual of the Vogan variety
\begin{align*}
     V_{\lambda}^{\ast}:=\{ x \in \mathfrak{g}^\vee \ | \ \Ad(\lambda(w)) x= |w|^{-1} x \}
\end{align*}
and the conormal variety $\Lambda_{\lambda}$ and its subsets $\Lambda_{C}$ (resp. $\Lambda_{C^{\ast}}^{\ast}$) for each $C\in V_{\lambda}/H_{\lambda}$ (resp. $C^{\ast} \in V_{\lambda}^{\ast}/ H_{\lambda}$)
\begin{align*}
    \Lambda_{\lambda}&:= \{ (v, v^{\ast}) \in V_{\lambda} \times V_{\lambda}^{\ast} \ | \ [v, v^{\ast}]=0 \},\\
     \Lambda_{C}&:= \{ (v, v^{\ast}) \in C \times V_{\lambda}^{\ast} \ | \ [v, v^{\ast}]=0 \},\\
      \Lambda_{C^{\ast}}^{\ast}&:= \{ (v, v^{\ast}) \in V_{\lambda} \times C^{\ast} \ | \ [v, v^{\ast}]=0 \},
\end{align*}
where $[\cdot,\cdot]$ denotes the Lie bracket on $\mathfrak{g}^\vee$. Fix a Killing form $B$ on $\mathfrak{g}$; this induces an isomorphism between $V_{\lambda}$ and $V_{\lambda}^{\ast}$. Consequently, we obtain a bijection between $V_{\lambda}/H_{\lambda}$ and $V_{\lambda}^{\ast}/H_{\lambda}$, which we denote by the transpose map $C \mapsto {}^t C$, with inverse $C^{\ast} \mapsto {}^t C^{\ast}$.
The following lemma defines the \emph{Pyasetskii involution} on $ V_{\lambda}/H_{\lambda}$.

\begin{lemma}[{\cite[Lemma 6.5]{CFMMX22}}]\label{lemma Pyasetskii involution}
    For each $H_{\lambda}$-orbit $C$ of $V_{\lambda}$, there exists a unique $H_{\lambda}$-orbit $\widehat{C}$ in $V_{\lambda}$ such that
    \[\overline{\Lambda_{C}}= \overline{\Lambda_{{}^t\widehat{C}}^{\ast}}.\]
\end{lemma}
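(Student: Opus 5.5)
The plan is the standard conormal-variety argument: show that $\Lambda_\lambda$ is equidimensional, that its irreducible components are exactly the closures $\overline{\Lambda_C}$, and that by symmetry they are also exactly the closures $\overline{\Lambda^{\ast}_{C^{\ast}}}$. Matching these two labelings then defines $\widehat{C}$.

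\textbf{Step 1: $\Lambda_C$ is a conormal bundle.} Identify $V_\lambda^{\ast}$ with the linear dual of $V_\lambda$ via the pairing induced by $B$. This is a perfect pairing, since $B$ pairs the $|w|$-eigenspace of $\Ad\lambda$ with the $|w|^{-1}$-eigenspace. By invariance of $B$, for $v\in V_\lambda$ and $v^{\ast}\in V_\lambda^{\ast}$ we have
\[
B([X,v],v^{\ast}) = B(X,[v,v^{\ast}]) \quad \text{for all } X\in \mathrm{Lie}(H_\lambda).
\]
Note that $[v,v^{\ast}]$ lies in the $|w|^0$-eigenspace, which is $\mathrm{Lie}(H_\lambda)$, and $B$ is nondegenerate there. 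Hence $[v,v^{\ast}]=0$ if and only if $v^{\ast}$ annihilates the tangent space $T_vC=[\mathrm{Lie}(H_\lambda),v]$. So $\Lambda_C=T^{\ast}_C V_\lambda$ is the conormal bundle of $C$. It is irreducible when $C$ is irreducible, and of dimension $\dim V_\lambda$.

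\textbf{Step 2: the components on each side.} In the connected case, $C$ is an orbit of the connected group $H_\lambda^{0}$ and is therefore irreducible. In general, including $\RO_{2n}$ where $H_\lambda\supsetneq H_\lambda^{\mathrm{SO}}$, I expect to argue either that $H_\lambda$ is still connected, which should hold for the $\GL$-type centralizers arising here, or that the finitely many $H_\lambda^{0}$-orbits inside $C$ are permuted by $H_\lambda$. In the second case one works with $H_\lambda$-stable unions of components; the statement is then about these $H_\lambda$-orbits of components, and the argument below goes through verbatim.

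Since there are finitely many orbits, $\Lambda_\lambda=\bigsqcup_C \Lambda_C$ is a finite union of locally closed pieces of dimension $\dim V_\lambda$, all distinct. Therefore the $\overline{\Lambda_C}$ are exactly the irreducible (respectively $H_\lambda$-stable) components of $\Lambda_\lambda$. The same argument with the roles of $V_\lambda$ and $V_\lambda^{\ast}$ swapped, applied to $\Lambda^{\ast}_{C^{\ast}}$, shows that the $\overline{\Lambda^{\ast}_{C^{\ast}}}$ are also exactly the components of $\Lambda_\lambda$. This requires finiteness of the $H_\lambda$-orbits on $V_\lambda^{\ast}$, which follows by transport of structure, since $V_\lambda^{\ast}\cong V_\lambda$ equivariantly up to the transpose.

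\textbf{Step 3: defining $\widehat{C}$.} Given $C$, the component $\overline{\Lambda_C}$ equals $\overline{\Lambda^{\ast}_{C^{\ast}}}$ for a unique $C^{\ast}$, and we set $\widehat{C}={}^tC^{\ast}$. Uniqueness holds because distinct $C^{\ast}$ give disjoint dense pieces $\Lambda^{\ast}_{C^{\ast}}$, hence distinct closures.

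The main obstacle is the disconnected case in Step 2. There one must check that $H_\lambda$-orbits may have several irreducible components while the resulting bijection between the components of $\Lambda_\lambda$ remains well defined. Everything else is the formal identification of the defining equation $[v,v^{\ast}]=0$ with the conormal condition.
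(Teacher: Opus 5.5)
Your proposal is correct. It is essentially the standard Pyasetskii conormal-bundle argument behind the cited \cite[Lemma 6.5]{CFMMX22}, which the paper does not reprove. For $\RO_{2n}$ the paper uses the same union-of-components device as your second alternative: Remark~\ref{rmk even orthog Pyasetskii dual} splits an $H_\lambda$-orbit into $H_\lambda^{\mathrm{SO}}$-orbits.

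\textbf{A false claim in Step 2.} $G$ being connected does not make $H_\lambda$ connected, and the classical centralizers in this paper are not always connected either. So both ``in the connected case $C$ is an $H_\lambda^0$-orbit'' and your first alternative fail.

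For example, take $G=\SO_9$ and $\std\circ\lambda=\rho\otimes(\lvert\cdot\rvert\oplus 1\oplus 1\oplus\lvert\cdot\rvert^{-1})$, with $\rho$ a $2$-dimensional irreducible symplectic representation of $W_F$. Then $H_\lambda\cong\GL_1\times\RO_2$ acts on $V_\lambda\cong\BC^2$. The $H_\lambda$-orbit of a nonzero isotropic vector is the union of two $H_\lambda^0$-orbits, namely the two punctured isotropic lines.

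So run your second alternative in every case, including connected $G$. The point you left to be checked is just equivariance. An element $h\in H_\lambda$ carries $\Lambda_{C^0}$ onto $\Lambda_{hC^0}$ and $\Lambda^{\ast}_{D^0}$ onto $\Lambda^{\ast}_{hD^0}$. Hence the bijection $C^0\mapsto D^0$ between $H_\lambda^0$-orbits, given by your Steps 1--3, commutes with the $H_\lambda$-action. It follows that both families $\{\overline{\Lambda_C}\}$ and $\{\overline{\Lambda^{\ast}_{C^{\ast}}}\}$ are exactly the unions of the $H_\lambda$-orbits on the set of irreducible components of $\Lambda_\lambda$. Your Step 3 then gives existence and uniqueness of $\widehat{C}$.

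\textbf{Finiteness on $V_\lambda^{\ast}$.} This needs no transport of structure; the identification $V_\lambda^{\ast}\cong V_\lambda$ you invoke is in any case not what $B$ provides, since $B$ only gives $V_\lambda^{\ast}\cong (V_\lambda)^{\ast}$. Instead, each $\Lambda^{\ast}_{D^0}$ is irreducible of dimension $\dim V_\lambda^{\ast}=\dim V_\lambda=\dim\Lambda_\lambda$. Therefore $D^0\mapsto\overline{\Lambda^{\ast}_{D^0}}$ injects into the finite set of irreducible components of $\Lambda_\lambda$.
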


\begin{remark}\label{rmk even orthog Pyasetskii dual}
    While \cite[Lemma 6.5]{CFMMX22} is stated for connected reductive groups, its extension to even orthogonal groups is straightforward. Indeed, let $\lambda$ be an infinitesimal parameter of $\mathrm{O}_{2n}$ and $\phi\in\Phi_{\lambda}(\mathrm{O}_{2n}).$ Using the notation of \S\ref{sec even orthog}, let $C_\phi$ be the $H_\lambda$-orbit of $X_\phi$ and $C_\phi^0$ be the $H_\lambda^\mathrm{SO}$-orbit of $X_\phi$. By Lemma \ref{lemma even orthog L-par orbit bijection}, we have $C_\phi=C_\phi^0\cup C_{{}^{\widehat{\theta}_c}\phi}^0$, where $C_{{}^{\widehat{\theta}_c}\phi}$ is either empty or denotes the $H_\lambda^\mathrm{SO}$-orbit of $X_{\mathrm{Int}(g)\circ\widehat{\theta}_c\circ\phi}$ if $\mathrm{Int}(g)\circ\widehat{\theta}_c\circ\phi\in \Phi_{\lambda}(\mathrm{O}_{2n})$ for some $g\in\SO^\vee_{2n}(\BC).$ We have that
    $\Lambda_{C_\phi}=\Lambda_{C_\phi^0}\cup \Lambda_{C_{{}^{\widehat{\theta}_c}\phi}^0}.$ It follows from Lemma \ref{lemma Pyasetskii involution} for $\SO_{2n}$ that 
    \[
    \overline{\Lambda_{C_\phi}}=\overline{\Lambda_{C_\phi^0}}\cup \overline{\Lambda_{C_{{}^{\widehat{\theta}_c}\phi}^0}}=\overline{\Lambda_{{}^t\widehat{C_\phi^0}}}\cup \overline{\Lambda_{{}^t\widehat{C_{{}^{\widehat{\theta}_c}\phi}^0}}}=\overline{\Lambda_{{}^t\widehat{C_\phi}}},
    \]
    where $\widehat{C_\phi}=\widehat{C_\phi^0}\cup\widehat{C_{{}^{\widehat{\theta}_c}\phi}^0}$.
    To justify the last equality, we note that $C_\phi^0\neq C_{{}^{\widehat{\theta}_c}\phi}^0$ if and only if ${}^t\widehat{C_\phi^0}\neq {}^t\widehat{C_{{}^{\widehat{\theta}_c}\phi}^0}.$
\end{remark}

\begin{remark}\label{rmk Pyasetskii involution}
    Fix a $f \in V_{\lambda}$ and define $C(f):=\{g \in V_{\lambda}^{\ast}\ | \ [f,g]=0\}$. Then $ \widehat{C} \cap C(f)$ is dense in $C(f)$.
\end{remark}

  For each $L$-parameter $\phi$ in $ \Phi_{\lambda}(G)$, define $\widehat{\phi} \in \Phi_{\lambda}(G)$ such that 
\[\widehat{C_{\phi}}= C_{\widehat{\phi}}.\]
We shall call $\widehat{\phi}$ (resp. $\widehat{C}$) the \emph{Pyasetskii involution} of $\phi$ (resp. $C$).

\subsection{\texorpdfstring{Closure ordering and rank matrix for general linear groups}{}}\label{sec closure ordering}\label{sec rank matrix}
Another important structure of $\Phi_{\lambda}(G)$ is the closure ordering inherited from the geometry of $V_{\lambda}/H_{\lambda}$.
\begin{defn}
For each infinitesimal parameter $\lambda$ of $G$, the finite set $\Phi_{\lambda}(G)$ is equipped with a partial ordering $\geq_C$ defined by $\phi_{1} \geq_C \phi_2$ if $\overline{C_{\phi_1}} \supseteq C_{\phi_2}$.
\end{defn}
In this subsection, we give a sufficient criterion to determine whether $\phi_{1} \geq_{C} \phi_2$ using \emph{rank matrix} (called \emph{rank triangles} in \cite[\S 10.2.1]{CFMMX22}). When $G=G_n$ is the classical groups we consider, this criterion is also necessary. Note that $\SO_{2n}$ is not one of our $G_n$, but $\RO_{2n}$ is.

We begin by considering the case $G=\GL_n(F)$. An infinitesimal parameter $\lambda$ of $\GL_n(F)$ is an $n$-dimensional representation of $W_F$, which can be expressed as
\begin{align}\label{eq inf char 1}
    \lambda = \bigoplus_{i\in I} \rho_i,
\end{align}
where each $\rho_i$ is a $d_i$-dimensional irreducible representation of $W_F$, with $\sum_{i\in I} d_i=n$. Let $W$ denote the underlying vector space of $\lambda$. This decomposes as $W = \bigoplus_{i\in I} W_i$, where $W_F$ acts on $W_i$ via $\rho_i$.

Consider now any $f \in V_{\lambda}$ (resp. $h \in H_{\lambda}$), which we regard as endomorphisms of $W$, so that $f(W_i)$ (resp. $h(W_i)$) are subspaces of $W$. By the definition of $f \in V_{\lambda}$, we see that either $f(W_i)=0,$ or the action of $W_F$ on $f(W_{i})$ is isomorphic to $\rho_i\lvert \cdot \rvert^1$. Similarly, for $h \in H_{\lambda} \subseteq \Aut(W)$, the action of $W_F$ on $h(W_i)$ is isomorphic to $\rho_i$. Equivalently, we have
\begin{align}\label{eq observation decomp Vogan}
    f(W_i) \subseteq \bigoplus_{j \in I,\ \rho_j \cong \rho_i\lvert\cdot\rvert^1} W_j, \quad h(W_i) \subseteq \bigoplus_{j \in I,\ \rho_j \cong \rho_i} W_j.
\end{align}
Thus, $V_{\lambda}$ and $H_{\lambda}$ admit natural decompositions, which we clarify below.

Let $\mathcal{C}$ denote the set of isomorphism classes of finite-dimensional irreducible representations of $W_F$. Define an equivalence relation on $\mathcal{C}$ by $\rho_1 \sim \rho_2$ if and only if there exists an integer $x$ such that $\rho_1 \cong \rho_2\lvert\cdot\rvert^x$. Let $[\rho]$ denote the equivalence class of $\rho$. Using this relation, we can rewrite \eqref{eq inf char 1} by grouping each summand according to its equivalence class. For each class $[\rho]$, define $I_{[\rho]} := \{i \in I\ | \  \rho_i \in [\rho]\}$ and set
\[
\lambda_{[\rho]} := \bigoplus_{i \in I_{[\rho]}} \rho_i,
\]
which is an infinitesimal parameter of some general linear group. We warn the reader that later we will define $\lambda_{[\rho]}$ for infinitesimal parameter $\lambda$ of classical groups with a different definition, so that $\lambda_{[\rho]}$ is still an infinitesimal parameter of some classical group. 

There is a finite set $R$ of equivalence classes such that $\lambda = \bigoplus_{[\rho] \in R} \lambda_{[\rho]}$. The observation \eqref{eq observation decomp Vogan} then tells us that
\begin{align}\label{eq decomp rho GL}
    V_{\lambda} \cong \bigtimes_{[\rho]\in R} V_{\lambda_{[\rho]}},\qquad H_{\lambda} \cong \bigtimes_{[\rho]\in R} H_{\lambda_{[\rho]}}.
\end{align}

Given any $\rho$, we can uniquely write $\rho \cong \rho^u \otimes \lvert\cdot\rvert^x$ for a unitary irreducible representation $\rho^u$ of $W_F$ and some $x \in \R$. Observe that for any $\rho', \rho \in [\rho]$, we have $(\rho')^u = \rho^u$. Accordingly, for $i \in I_{[\rho]}$, define a real number $x_i$ by $\rho_i \cong \rho^u \lvert\cdot\rvert^{x_i}$; in this case, $x_i - x_j \in \Z$ for any $i, j \in I_{[\rho]}$.
Set
\[
e_{[\rho]}^{\min} = \min_{i \in I_{[\rho]}}\{x_i\}, \qquad e_{[\rho]}^{\max} = \max_{i \in I_{[\rho]}}\{x_i\}, \qquad l_{[\rho]} := e_{[\rho]}^{\max} - e_{[\rho]}^{\min}.
\]
For $t \in \Z$, let $W_{\rho, t}$ denote the (possibly zero) subspace of $W$ on which $W_F$ acts as
\[
\bigoplus_{i \in I_{[\rho]},\ \rho_i \cong \rho^u\lvert\cdot\rvert^t} \rho_i.
\]
By Schur's lemma (and with the convention that $\GL(\BC^0) := \{1\}$), we have
\begin{align*}
    H_{\lambda_{[\rho]}} &\cong \bigtimes_{t = e_{[\rho]}^{\min}}^{e_{[\rho]}^{\max}} \GL(W_t), \\
    V_{\lambda_{[\rho]}} &= \bigtimes_{t = e_{[\rho]}^{\min}}^{e_{[\rho]}^{\max} - 1} \Hom_{\BC}(W_t, W_{t+1}),
\end{align*}
where $H_{\lambda_{[\rho]}}$ acts on $V_{\lambda_{[\rho]}}$ by conjugation.

For any $f_{[\rho]} \in V_{\lambda_{[\rho]}}$, write $f_{[\rho]} = (f_t)_{e_{[\rho]}^{\min} \leq t < e_{[\rho]}^{\max}}$, where $f_t \in \Hom_{\BC}(W_t, W_{t+1})$. Then, we associate an $l_{[\rho]} \times l_{[\rho]}$ upper triangular matrix $r_{[\rho]}(f_{[\rho]}) = (r_{[\rho]}(f_{[\rho]})_{ij})_{ij}$, where
\begin{align}\label{eq rk matrix def}
     r_{[\rho]}(f_{[\rho]})_{ij} :=
     \begin{cases}
        \operatorname{rank}\left( f_{e_{[\rho]}^{\max} - i} \circ f_{e_{[\rho]}^{\max} - i - 1} \circ \cdots \circ f_{e_{[\rho]}^{\max} - j} \right) & \text{if } i \leq j, \\
        0 & \text{otherwise}.
     \end{cases}
\end{align}
This matrix $r_{[\rho]}(f_{[\rho]})$ is invariant under the action of $H_{\lambda_{[\rho]}}$. We formalize this with the following definition.

\begin{defn}
Let $\lambda$ be an infinitesimal parameter of $\GL_n(F)$.
\begin{enumerate}
    \item  Let $C$ be an $H_{\lambda}$-orbit of $V_{\lambda}$. For any $f \in C$, write $f = (f_{[\rho]})_{[\rho] \in R}$ via the decomposition \eqref{eq decomp rho GL}. The \emph{rank matrix} of $C$ with respect to $[\rho]$ is defined by $r_{[\rho]}(C) := r_{[\rho]}(f_{[\rho]})$. We then let $r(C) := \{ r_{[\rho]}(C) \}_{[\rho] \in R}$ denote the collection of these rank matrices. For $\phi \in \Phi_{\lambda}(\GL_n(F))$, we likewise define $r(\phi) := r(C_\phi)$.
    \item Given two orbits $C_1$ and $C_2$, we write $r(C_1) \geq r(C_2)$ if $r_{[\rho]}(C_1)_{ij} \geq r_{[\rho]}(C_2)_{ij}$ for all $[\rho] \in R$ and all $1 \leq i, j \leq l_{[\rho]}$.
\end{enumerate}
\end{defn}

The $H_{\lambda}$-orbits are completely determined by the data of $r(C)$: two orbits coincide if and only if their rank matrices agree, and the closure relation $\overline{C_1} \supseteq C_2$ holds if and only if $r(C_1) \geq r(C_2)$. For further details and an explicit algorithm to recover $C$ from $r(C)$, see \cite[\S 3]{CR22}.

\begin{lemma}
    Let $\lambda$ be an infinitesimal parameter of $\GL_n(F)$.    Let $C_1, C_2$ be two $H_{\lambda}$-orbits of $V_{\lambda}$. 
    \begin{enumerate}
        \item [(a)] We have $C_1=C_2$ if and only if $r(C_1)=r(C_2)$.
        \item [(b)] We have $C_1 \geq_C C_2$ if and only if $r(C_1) \geq r(C_2)$.
    \end{enumerate}
\end{lemma}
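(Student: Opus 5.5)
By \eqref{eq decomp rho GL}, both $V_{\lambda}$ and $H_{\lambda}$ split as products over $[\rho]\in R$, with each factor $H_{\lambda_{[\rho]}}$ acting only on its own factor $V_{\lambda_{[\rho]}}$. An $H_\lambda$-orbit is therefore a product $C=\prod_{[\rho]}C_{[\rho]}$ of $H_{\lambda_{[\rho]}}$-orbits. Its closure is the product of the closures, so $\overline{C_1}\supseteq C_2$ holds exactly when $\overline{C_{1,[\rho]}}\supseteq C_{2,[\rho]}$ for every $[\rho]$. Hence both (a) and (b) reduce to a single class $[\rho]$. There the space is
\[
V=\bigtimes_{t}\Hom_{\BC}(W_t,W_{t+1}),
\]
acted on by $\prod_t \GL(W_t)$: this is the representation space of an equioriented type $A$ quiver with dimension vector $(\dim W_t)_t$.

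\textbf{Part (a).} The plan is to prove (a) via the Zelevinsky / Abeasis–Del Fra normal form, or equivalently Gabriel's theorem.
\begin{enumerate}
\item Every representation $f$ decomposes as a direct sum of indecomposable interval representations $\BC$ placed on $[a,b]$ with identity maps. Concretely, this decomposition is obtained by choosing compatible bases through successive images and kernels.
\item The orbit of $f$ is therefore determined by the multiplicities $m_{[a,b]}$ of these intervals, i.e.\ by a multi-segment.
\item The rank of the composite $f_{b-1}\circ\cdots\circ f_a$ equals $\sum_{a'\le a,\ b'\ge b} m_{[a',b']}$.
\item Inclusion–exclusion inverts this relation:
\[
m_{[a,b]}=r_{a,b}-r_{a-1,b}-r_{a,b+1}+r_{a-1,b+1},
\]
with boundary conventions $r_{t,t}=\dim W_t$ and out-of-range ranks equal to $0$. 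Under the indexing \eqref{eq rk matrix def}, the diagonal entries record the dimensions.
\end{enumerate}
It follows that the rank matrix determines the multiplicities and hence the orbit. Conversely, the rank matrix is an invariant of the orbit, since conjugation preserves the rank of each composite. This proves (a).

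\textbf{Part (b), the easy direction.} The closure relation implies the rank inequality by lower semicontinuity of rank. The locus $\{f:\operatorname{rank}(f_{b-1}\circ\cdots\circ f_a)\le k\}$ is Zariski closed, being cut out by minors, and it is $H$-stable. If $C_2\subseteq\overline{C_1}$, taking $k$ equal to the corresponding rank on $C_1$ shows that every rank on $C_2$ is bounded by the corresponding rank on $C_1$.

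\textbf{Part (b), the converse.} This is the main obstacle: we must show that $r(C_1)\ge r(C_2)$ implies $C_2\subseteq\overline{C_1}$. I would first try the route through the rank variety. The set
\[
Z=\{f:\ r(f)\le r(C_1)\ \text{entrywise}\}
\]
is closed and $H$-stable, and the claim is that $Z=\overline{C_1}$. The proof would go by induction along the partial order.
\begin{enumerate}
\item Suppose $r(C_2)<r(C_1)$. Find an orbit $C'$ with $r(C_2)<r(C')\le r(C_1)$ and $C_2\subseteq\overline{C'}$.
\item Construct $C'$ by an elementary degeneration: replace two intervals $[a,b]$ and $[a',b']$ in the multi-segment of $C_2$ by their union and intersection. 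This is Zelevinsky's union–intersection move. It is realized by a one-parameter family $f_s$ that lies in $C'$ for $s\ne0$ and equals the point of $C_2$ at $s=0$; this is built by gluing the two interval modules with a parameter $s$ in the overlapping stretch.
\item A combinatorial lemma is then needed: if $r(C_2)<r(C_1)$, some such move increases $r(C_2)$ while staying $\le r(C_1)$.
\end{enumerate}
Since the set of orbits is finite, iterating these moves reaches $C_1$, and transitivity of closure gives the result. The combinatorial lemma is the delicate point. To prove it, I would choose the segment pair using the entry $(a,b)$ that is maximal, in a suitable order, among those where the rank matrices differ. Alternatively, I would cite the literature result stated just before the lemma, namely \cite[\S 3]{CR22} and \cite[Theorem 2.2]{Zel81}, which the paper already invokes.
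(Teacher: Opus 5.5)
Your proposal is correct. The paper gives no argument of its own for this lemma: it only refers to \cite[\S 3]{CR22} and to Zelevinsky's union--intersection description of the closure order. You go further in two places, and both are right.
\begin{itemize}
\item Part (a): the reduction to a single $[\rho]$, the interval decomposition, and the inclusion--exclusion formula $m_{[a,b]}=r_{a,b}-r_{a-1,b}-r_{a,b+1}+r_{a-1,b+1}$ give a complete proof.
\item Part (b), easy direction: the semicontinuity argument is complete.
\end{itemize}
One small misreading: with the indexing \eqref{eq rk matrix def}, the matrix is $l_{[\rho]}\times l_{[\rho]}$ with one index per arrow. Its diagonal entries are therefore the ranks of the single maps $f_t$, not $\dim W_t$. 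This does no harm, since $\dim W_t$ is fixed by $\lambda$ and can be supplied as the boundary value $r_{t,t}$.

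For the converse of (b), your induction is the standard degeneration strategy, but it does not yet prove anything, because the combinatorial lemma carries essentially all the difficulty. Concretely, take linked intervals $[a_1,b_1],[a_2,b_2]$ with $a_1<a_2\le b_1+1\le b_2$ and replace them by $[a_1,b_2],[a_2,b_1]$. This raises $r_{a,b}$ by exactly one for every $(a,b)$ with $a_1\le a<a_2$ and $b_1<b\le b_2$, and leaves all other entries unchanged. So you need a linked pair in the multi-segment of $C_2$ whose whole rectangle lies inside the set of entries where $r(C_1)$ strictly exceeds $r(C_2)$. Choosing a single extremal entry where the two matrices differ does not by itself give such a pair. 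As written, that direction rests on the cited literature, which is exactly where the paper rests it too.
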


Given an $L$-parameter $\phi$ of $\GL_n(F)$, the computation of its associated rank matrices $r(\phi)$ from its decomposition is straightforward. We illustrate this process in the example below, noting that the procedure easily generalizes to more complicated cases.

\begin{exmp}\label{exmp rank matrix GL}
    Let $\phi$ be an $L$-parameter of $\GL_n(F)$ of the form
    \[
        \phi = \bigoplus_{k=1}^m \rho\lvert\cdot\rvert^{x_k} \otimes S_{a_k},
    \]
    where each $x_k \in \half{1}\Z$, and $S_{a_k}$ denotes the unique $a_k$-dimensional irreducible representation of $W_F$. Assume that $x_i + \half{a_i} \in \delta + \Z$ for all $i$ and some fixed $\delta \in \{0, \half{1}\}$, so that the infinitesimal parameter $\lambda := \lambda_\phi$ satisfies $\lambda = \lambda_{[\rho']}$ for $\rho' = \rho\lvert\cdot\rvert^{x_1 + \half{a_1 - 1}}$.

    Set
    \[
        e_{[\rho']}^{\max} = \max_{1 \leq k \leq m}\left\{ x_k + \half{a_k - 1} \right\}, \qquad
        e_{[\rho']}^{\min} = \min_{1 \leq k \leq m}\left\{ x_k + \half{1 - a_k} \right\}.
    \]
    For each summand $\rho\lvert\cdot\rvert^{x_k} \otimes S_{a_k}$, associate an $l_{[\rho']}\times l_{[\rho']}$ matrix (where $l_{[\rho']} = e_{[\rho']}^{\max} - e_{[\rho']}^{\min}$) whose entries are given by
    \begin{align}\label{eq r summand}
        r_{[\rho']}(\rho\lvert\cdot\rvert^{x_k} \otimes S_{a_k})_{ij} :=
        \begin{cases}
            1 & \text{if } e_{[\rho']}^{\max} - x_k - \half{a_k - 1} - 1 \leq i \leq j \leq e_{[\rho']}^{\max} - x_k + \half{a_k - 1}, \\[1ex]
            0 & \text{otherwise.}
        \end{cases}
    \end{align}
    The rank matrix for $\phi$ is then obtained by summing over all summands:
    \[
        r_{[\rho']}(\phi) = \sum_{k=1}^m r_{[\rho']}(\rho\lvert\cdot\rvert^{x_k} \otimes S_{a_k}).
    \]
\end{exmp}

 \subsection{\texorpdfstring{Closure ordering and rank matrix for general groups}{}}

Now we consider a general connected reductive group $G$ (or $\mathrm{O}_{2n}$ in which case we may regard ${}^L\mathrm{O}_{2n}={}^L\mathrm{SO}_{2n}$ but recall from \S\ref{sec even orthog} that the equivalence of $L$-parameters differs). We fix an $n$-dimensional representation $\xi$ of ${}^L G$, which is an algebraic homomorphism $\xi: {}^L G \to  \GL_n(\BC)$. It induces a Lie algebra homomorphism $d\xi: \mfr{g}^{\vee} \to \mfr{gl}_n$.
The composition with $\xi$ gives a map $\xi_{\ast}:\Phi_{\lambda}(G) \to \Phi_{\xi \circ \lambda}(\GL_n)$, and we define
\[ r_{\xi}(\phi):= r(\xi \circ \phi).\]
Since $\xi$ and $d\xi$ are continuous, if $\phi_1 \geq_C \phi_2$, then $\xi\circ \phi_1 \geq_C \xi \circ \phi_2$, but the converse is not true unless $\xi$ is an embedding and $\lambda_{\phi_1}=\lambda_{\phi_2}$. This is the case if $G=G_n$ is a classical group and $\xi=\std$. We record this fact in the following lemma.

\begin{lemma}\label{lem closure ordering for classical groups}
    Let $\lambda$ be an infinitesimal parameter of $G_n$. Let $\phi_1, \phi_2 \in \Phi_{\lambda}(G_n)$. Then $\phi_1 \geq_C \phi_2$ if and only if $\std \circ \phi_1 \geq_C \std \circ \phi_2$ as $L$-parameters of $\GL_N(F)$.
\end{lemma}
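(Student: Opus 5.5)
The forward direction is formal. If $\phi_1 \geq_C \phi_2$, then $\overline{C_{\phi_1}} \supseteq C_{\phi_2}$. The map $d\std: V_\lambda \to V_{\std\circ\lambda}$ is linear, hence continuous. It is equivariant for $H_\lambda \to H_{\std\circ\lambda}$, and it sends $X_{\phi}$ to $X_{\std\circ\phi}$. So it sends $C_{\phi_i}$ into $C_{\std\circ\phi_i}$ and $\overline{C_{\phi_1}}$ into $\overline{C_{\std\circ\phi_1}}$, which gives $\std\circ\phi_1 \geq_C \std\circ\phi_2$.

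For the converse I would reduce to a statement about nilpotent orbits of classical Lie algebras. Step one uses the decomposition by classes $[\rho]$; it should be the classical analogue of \eqref{eq decomp rho GL}, grouping $[\rho]$ with its dual class. This reduces both $V_\lambda$ and $H_\lambda$ to products of three kinds of factors:
\begin{itemize}
\item factors where $[\rho]$ is not self-dual, which are literally $\GL$-type Vogan varieties, so the criterion is the $\GL$ rank-matrix lemma;
\item factors where $[\rho]$ is self-dual, which become graded pieces of an orthogonal or symplectic space;
\item in the self-dual case, $H_{\lambda_{[\rho]}}$ is a product of $\GL(W_t)$ for $t>0$ together with a single $\RO$ or $\Sp$ of $W_0$ (or $W_{1/2}$ paired with $W_{-1/2}$), and $V_{\lambda_{[\rho]}}$ is the degree-one part of a $\Z$-graded classical Lie algebra.
\end{itemize}
Orbits in such a graded piece correspond to nilpotent $H$-orbits in a $\theta$-group, in the sense of Vinberg. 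Step two identifies these orbits with graded nilpotent orbits of the full classical group $\RO(W)$ or $\Sp(W)$. By a Kostant--Sekiguchi/Vinberg type argument, two degree-one elements are $H_\lambda$-conjugate if and only if they are conjugate in the ambient graded group, and the same holds for closure relations.

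Step three is the heart of the argument. It must show that closure among graded orbits of the classical group is detected by the $\GL$ rank data, that is, by ranks of all compositions $f_{i}\circ\cdots\circ f_j$ between graded pieces. My plan is to prove by induction along a chain that if $r(\std\circ\phi_1)\ge r(\std\circ\phi_2)$, then $\phi_1$ and $\phi_2$ are connected by a sequence of minimal degenerations. Each such degeneration is realized inside a small Levi: either a $\GL$-type one, or a classical group of small rank acting on a few segments. For each degeneration I would exhibit an explicit one-parameter family $f(s)$ in $V_\lambda$ with $f(s)\in C_{\phi_1}$ for $s\ne 0$ and $f(0)\in C_{\phi_2}$.

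Alternatively, and more cheaply, I could quote the known result: for $\RO$ and $\Sp$, Kraft--Procesi show that nilpotent orbit closures are the intersections of $\GL$ orbit closures with the classical Lie algebra. Its graded analogue for symmetric or graded classical pairs, covering the $\theta$-group setting as in \cite{CR22}, would give the equivalence directly.

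The main obstacle is Step three. The issue is that $\overline{C_{\std\circ\phi_1}}\cap V_\lambda$ might a priori contain several $H_\lambda$-orbits with the same $\GL$ ranks, or be reducible. I would first try to prove that the intersection is irreducible and equal to $\overline{C_{\phi_1}}$ by a dimension count: compare $\dim C_\phi$, computed via centralizers as half the $\GL$ quantity plus a correction term, with a Kraft--Procesi-style argument. The $\RO_{2n}$ case (rather than $\SO_{2n}$) is needed precisely so that the orbits are the full-orthogonal-group ones, on which $\std_*$ is injective.
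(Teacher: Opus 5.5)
Your forward direction is correct, and it is exactly the paper's justification: $d\std$ is linear, equivariant for $\widetilde{H}_\lambda\to H_{\std\circ\lambda}$, and sends $X_\phi$ to $X_{\std\circ\phi}$. For the converse the paper gives no proof; it records the equivalence as known. The converse is not formal: being an embedding is not enough, as the $\SO_{2n}$ case shows (Remark \ref{rmk weakly equivalent}(3)). Your proposal does not establish it.

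Step one is sound. It is the decomposition \eqref{eq decomp rho classical}, with Proposition \ref{prop nsd} handling the non-selfdual factors. Step two, however, is vacuous or false:
\begin{itemize}
\item After unramification, the grading-preserving subgroup of $\RO(W)$ or $\Sp(W)$ is $\widetilde{H}_\lambda$ itself, so ``conjugate in the graded group'' just means $H_\lambda$-conjugate.
\item Conjugacy in the ungraded group is strictly weaker. For $\GL_3$ and $\lambda=1\oplus\lvert\cdot\rvert\oplus\lvert\cdot\rvert^{2}$, the multisegments $\{[0,1],[2]\}$ and $\{[0],[1,2]\}$ have the same Jordan type but give distinct, incomparable $H_\lambda$-orbits. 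Twisting by a non-selfdual character and adding duals gives the same phenomenon in a classical Vogan variety.
\end{itemize}
So the ungraded Kraft--Procesi/Gerstenhaber--Hesselink theorem says nothing about orbit closures in $V_\lambda$, and \cite{CR22} treats only $\GL_n$. Your ``cheap'' route needs the graded statement itself, with a precise source. That statement is: for orthogonal or symplectic representations of the equioriented type $A$ quiver (sign fixed by good versus bad parity), the degeneration order is the restriction of the ordinary one. It is not a corollary of the ungraded result.

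If you prove it yourself, a dimension count will not suffice. It cannot rule out a smaller orbit forming its own irreducible component of $\overline{C_{\std\circ\phi_1}}\cap V_\lambda$ unless you have a lower bound on component dimensions, and that bound is exactly what is missing.

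The degeneration route is the right one, but you skip its main subtlety: covering relations in the induced order on $\Phi_\lambda(G_n)$ need not be $\GL$-covers. In Example \ref{exmp bad parity}, $\phi^4<\phi^1$ is a cover in $\Phi_\lambda(\SO_7(F))$. Yet the non-selfdual $\phi_2,\phi_3$ lie strictly between $\phi_4$ and $\phi_1$ in $\GL_6$. Inside $V_\lambda$ the component $f_0$ is determined by $f_{-1}$, so the two $\GL$-steps must be performed at once.

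You would therefore need the following lemma. Whenever $r(\phi_1)>r(\phi_2)$ with $\phi_1,\phi_2\in\Phi_\lambda(G_n)$, produce an intermediate $\phi\in\Phi_\lambda(G_n)$ with $r(\phi_1)\geq r(\phi)>r(\phi_2)$. Also produce an explicit curve in $V_\lambda$ (not merely in $V_{\lambda_{\GL}}$) degenerating $C_\phi$ to $C_{\phi_2}$. Such a curve might be a dual pair of elementary $\GL$-moves done simultaneously, or a move inside a small selfdual block. As it stands, the converse is unproved.
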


The following statement determines $r(\xi \circ \widehat{\phi})$, or equivalently the $\GL_n(\BC)$-conjugacy class of $\xi \circ \widehat{\phi}$.

\begin{lemma}\label{lem rank matrix for involution}
Let $\lambda $ be an infinitesimal parameter of $G$. Decompose $\xi \circ \lambda =\bigoplus_{[\rho] \in R} \lambda_{[\rho]}$. Taking any $y \in {}^t C_{\phi}$, for any $[\rho]\in R$, we have
\[ r_{[\rho]}(\xi \circ \widehat{\phi})_{ij}= \max\{  r_{[\rho]}(d\xi(x))_{ij} \ | \ x \in V_{\lambda},\ [ x,y]=0 \}.  \]
\end{lemma}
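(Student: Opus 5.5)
The plan is to reduce the statement to a density assertion in the fibre of the conormal variety over $y$, and then use lower semicontinuity of rank. Let $Z(y):=\{x\in V_{\lambda}\ |\ [x,y]=0\}$. This is a linear subspace of $V_{\lambda}$, hence irreducible. The goal is to show that $\widehat{C_\phi}\cap Z(y)$ is a nonempty open dense subset of $Z(y)$; this is the dual form of Remark \ref{rmk Pyasetskii involution}, with the roles of $V_\lambda$ and $V_\lambda^\ast$ exchanged. Granting that, the lemma follows as below.

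First, $d\xi$ is linear, and $d\xi(X_{\phi'})=X_{\xi\circ\phi'}$ for any $\phi'\in\Phi_\lambda(G)$. Write $d\xi(x)=(d\xi(x)_{[\rho]})_{[\rho]}$ according to \eqref{eq decomp rho GL}. Each entry $r_{[\rho]}(d\xi(x))_{ij}$ is the rank of a composition of blocks of $d\xi(x)$, which is a polynomial map of $x$. Hence the function $x\mapsto r_{[\rho]}(d\xi(x))_{ij}$ is lower semicontinuous on $Z(y)$. Its maximum $M_{ij}$ is therefore attained on a nonempty Zariski open subset $U_{ij}\subseteq Z(y)$.

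Second, the map $x\mapsto r_{[\rho]}(d\xi(x))$ is constant on $H_\lambda$-orbits, because $\xi(h)$ conjugates $d\xi(x)$. On the orbit $\widehat{C_\phi}=C_{\widehat\phi}$ it therefore equals $r_{[\rho]}(\xi\circ\widehat\phi)$. Since $Z(y)$ is irreducible, the open dense set $\widehat{C_\phi}\cap Z(y)$ meets every $U_{ij}$. This gives $r_{[\rho]}(\xi\circ\widehat\phi)_{ij}=M_{ij}$, which is the claim.

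For the density statement, the key step is to identify the irreducible component. Lemma \ref{lemma Pyasetskii involution} (and Remark \ref{rmk even orthog Pyasetskii dual} for $\RO_{2n}$) gives $\overline{\Lambda_{C}}=\overline{\Lambda^{\ast}_{{}^t\widehat C}}$. Apply this with $C=\widehat{C_\phi}$ and use that $C\mapsto\widehat C$ is an involution; this gives
\[ \overline{\Lambda^{\ast}_{{}^tC_\phi}}=\overline{\Lambda_{\widehat{C_\phi}}}. \]
Now $\Lambda^\ast_{{}^tC_\phi}$ is an $H_\lambda$-equivariant vector bundle over the orbit ${}^tC_\phi$, with fibre $Z(y')$ over $y'$. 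The set $\Lambda_{\widehat{C_\phi}}\cap\Lambda^\ast_{{}^tC_\phi}$ is $H_\lambda$-stable and contains a dense open subset $O$ of $\Lambda^\ast_{{}^tC_\phi}$, which we may take $H_\lambda$-stable. Such an $O$ projects onto all of ${}^tC_\phi$, so $O\cap(Z(y)\times\{y\})$ is a nonempty open subset of the irreducible space $Z(y)$, hence dense. Its first projection lies in $\widehat{C_\phi}\cap Z(y)$, which gives the required density.

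I expect the main obstacle to be the irreducibility and equivariance used in the last step when $H_\lambda$ is disconnected, notably for $\RO_{2n}$. There $\Lambda^\ast_{{}^tC_\phi}$ may have several components, one over each $H_\lambda^{0}$-orbit in ${}^tC_\phi$. To handle this, I would first run the argument with the identity component $H_\lambda^{0}$, whose orbits are irreducible, so the bundle over the $H^0_\lambda$-orbit of $y$ is irreducible. I would then invoke Remark \ref{rmk even orthog Pyasetskii dual} to see that the component matched with the $H_\lambda^0$-orbit of $y$ lies in $\overline{\Lambda_{\widehat{C_\phi}}}$. This suffices, since ranks are invariant under the full $H_\lambda$.
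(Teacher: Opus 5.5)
Your proof is correct, but it is organized differently from the paper's.

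The paper proves the two inequalities separately. For ``$\geq$'' it uses $Z(y)\subseteq p(\overline{\Lambda^{\ast}_{{}^tC_\phi}})=p(\overline{\Lambda_{C_{\widehat\phi}}})\subseteq\overline{C_{\widehat\phi}}$ together with semicontinuity of rank. For ``$\leq$'' it observes that $p(\Lambda^{\ast}_{{}^tC_\phi})$ is the union of the finitely many $H_\lambda$-orbits meeting $Z(y)$, and that its closure contains $C_{\widehat\phi}$. Hence some orbit meeting $Z(y)$ has closure containing $C_{\widehat\phi}$, and so has an entrywise larger rank matrix.

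You instead prove the stronger geometric statement that $C_{\widehat\phi}\cap Z(y)$ is dense in $Z(y)$. This is the dual form of Remark \ref{rmk Pyasetskii involution}, which the paper states without proof. Both inequalities then follow at once from semicontinuity and the irreducibility of the linear space $Z(y)$. As a bonus, the maxima for all $(i,j)$ and all $[\rho]$ are attained simultaneously on a common dense open subset of $Z(y)$. The paper's route is more economical, using only closures and finiteness of orbits.

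One step of yours deserves a line of justification: that $\Lambda_{\widehat{C_\phi}}\cap\Lambda^{\ast}_{{}^tC_\phi}$ contains a dense open subset of $\Lambda^{\ast}_{{}^tC_\phi}$. This holds because both sets are locally closed (an orbit cut out by the closed condition $[v,v^{\ast}]=0$). Hence each is open and dense in their common closure, and the intersection of two open dense subsets is again open and dense. The same reasoning shows the intersection is open dense in $\Lambda^{\ast}_{{}^tC_\phi}$ itself.

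This argument uses no irreducibility of $\Lambda^{\ast}_{{}^tC_\phi}$, and the intersection is stable under all of $H_\lambda$, not just $H_\lambda^{0}$. So the concern in your last paragraph about disconnected $H_\lambda$ (e.g.\ for $\RO_{2n}$) does not arise. The only irreducibility you need is that of $Z(y)$, and the detour through $H_\lambda^{0}$ and Remark \ref{rmk even orthog Pyasetskii dual} is unnecessary.
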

\begin{proof}
Since $\xi$ is algebraic and
$r_{[\rho]}(d\xi(x))_{ij}$ counts the rank of $d\xi(x)^{j-i+1}$ after restricting to a subspace, the set
\[\{x \in V_{\lambda}\ | \ r_{[\rho]}(d\xi(x))_{ij} \geq t\}\]
is an open set for any $t$.
Let $p:\Lambda_{\lambda} \to V_\lambda$ be the projection to the first coordinate. Then 
\[\{x\in V_{\lambda} \ | \ [x,y]=0 \} \subseteq p( \overline{\Lambda^{\ast}_{{}^tC_{{\phi}}}})=p( \overline{\Lambda_{C_{\widehat{\phi}}}}) \subseteq \overline{p(\Lambda_{C_{\widehat{\phi}}})}= \overline{C_{\widehat{\phi}}},\] 
which implies that
\[  r_{[\rho]}(\xi \circ \widehat{\phi})_{ij} \geq \max\{  r_{[\rho]}(d\xi(x))_{ij} \ | \ x \in V_{\lambda},\ [ x,y]=0 \}.\]

Conversely, note that $p(\Lambda_{{}^tC_{{\phi}}}^{\ast})$ coincides with the union of all $H_{\lambda}$-orbits of $x \in V_{\lambda}$ satisfying $[x, y] = 0$. Indeed, if $z \in p(\Lambda_{{}^tC_{{\phi}}}^{\ast})$, then there exists $y' \in {}^tC_{{\phi}}$ such that $[z, y'] = 0$. For any $h \in H_{\lambda}$ with $h(y') = y$, we see that $0 = [h(z), h(y')] = [h(z), y]$, so $h(z)$ also satisfies $[h(z), y] = 0$. Conversely, if $z = h(z')$ for some $h \in H_{\lambda}$ and $[z', y] = 0$, then $[z, h^{-1}(y)] = 0$, where $h^{-1}(y) \in {}^tC_{{\phi}}$. Thus, $z$ lies in $p(\Lambda_{{}^tC_{{\phi}}}^{\ast})$. 

Now, since
\[
C_{\widehat{\phi}} \subseteq p(\overline{\Lambda_{C_{\widehat{\phi}}}}) = p(\overline{\Lambda^{\ast}_{{}^tC_{{\phi}}}}) \subseteq \overline{p(\Lambda^{\ast}_{{}^tC_{{\phi}}})},
\]
the above reasoning implies that there exists an orbit $C' \subseteq p(\Lambda^{\ast}_{{}^tC_{{\phi}}})$ whose closure contains $\overline{C_{\widehat{\phi}}}$. Therefore,
\[
r_{[\rho]}(\xi \circ \widehat{\phi})_{ij} \leq \max\{ r_{[\rho]}(d\xi(x))_{ij} \mid x \in V_{\lambda},\, [x, y] = 0 \},
\]
which completes the proof of the lemma.
\end{proof}



Recall that two admissible homomorphisms $\phi_1, \phi_2: W_F \times \SL_2(\BC) \to {}^L G$ are considered equivalent if they are conjugate under $G^{\vee}$. By allowing the representation $\xi$ to vary, the preceding lemma identifies the "weak equivalence" class of $\widehat{\phi}$ in the following sense.

\begin{defn}\label{def weak equiv}
    Two admissible homomorphisms $\phi_1, \phi_2: W_F \times \SL_2(\BC) \to {}^L G$ are said to be \emph{weakly equivalent} if, for every finite-dimensional representation $\xi: {}^L G \to \GL_n(\BC)$, the homomorphisms $\xi \circ \phi_1$ and $\xi \circ \phi_2$ are conjugate in $\GL_n(\BC)$. We write $[\phi]_w$ to denote the weak equivalence class of an $L$-parameter $\phi$.
\end{defn}

\begin{remark}\label{rmk weakly equivalent}\ 
    \begin{enumerate}
        \item For $G= \SO_{2n+1}(F)$, $\Sp_{2n}(F)$, and $\RO_{2n}(F)$, let $\std: {}^L G \to \GL_{N}(\BC)$ denote the standard embedding, where $N=2n+1$ if $G=\Sp_{2n}(F)$ and $N=2n$ otherwise. The induced map $\std_*\colon \Phi(G) \to \Phi(\GL_N)$ is injective \textnormal{(see \cite[Theorem 8.1]{GGP12})}. Thus, for these groups, the weak equivalence class of any $L$-parameter $\phi$ consists of a single $L$-parameter. For the remainder of the paper, we will simply write $r(\phi) := r_{\std}(\phi)$ when $G$ is one of these classical groups.
        \item For $G=\RG_2(F)$, consider the sequence of embeddings
        \[
            \RG_2(\BC) \xrightarrow{\iota} \mathrm{Spin}_7(\BC) \xrightarrow{\mathrm{std}} \SO_{7}(\BC).
        \]
        The composition $\mathrm{std}_* \circ \iota_* : \Phi(\RG_2) \to \Phi(\Sp_6)$ is also injective \textnormal{(\cite[Lemma 2.3(iii)]{GS23})}. Consequently, the weak equivalence class of any $L$-parameter $\phi$ of $\RG_2$ also consists of a single $L$-parameter.
        \item For $G=\SO_{2n}$ with $n > 2$, again let $\std: \SO_{2n}(\BC) \to \GL_{2n}(\BC)$ be the standard embedding. In this case, $\std(\phi_1)$ and $\std(\phi_2)$ are conjugate in $\GL_{2n}(\BC)$ if and only if $\phi_2 \in \{\phi_1, \phi_1^{c}\}$, where $\phi_1^c$ denotes the image of $\phi_1$ under the outer automorphism. Therefore, $[\phi] \subseteq \{\phi, \phi^c\}$, and there exist $L$-parameters $\phi$ for which $[\phi] = \{\phi, \phi^c\}$ \textnormal{(see \cite[Proposition 7.3]{Mat24})}. It is worth noting that in this case, $\dim(C_{\phi}) = \dim(C_{\phi^c})$. This observation is crucial for the discussion in \cite[Part IV]{CHLLRX}.
    \end{enumerate}
\end{remark}

Here is another crucial corollary of Lemma \ref{lem rank matrix for involution}. We may compare the closure ordering between $\widehat{(\xi \circ \phi)}$ and 
$\xi \circ (\widehat{\phi})$, which are $L$-parameters of $\GL_n.$ 

\begin{cor}\label{cor hat GL classical}
Suppose that $\phi$ is an $L$-parameter of $G$. Then as $L$-parameters of $\GL_{n}(F)$, we have
\[ \widehat{(\xi \circ \phi)}\geq_C \xi \circ (\widehat{\phi})\]
\end{cor}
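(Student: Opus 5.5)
The plan is to compare rank matrices on both sides, using Lemma \ref{lem rank matrix for involution} twice: once for $G$ with the representation $\xi$, and once for $\GL_n(F)$ with the identity representation. For $\GL_n$ the closure order is detected exactly by rank matrices (part (b) of the lemma on $\GL_n$ orbits). It therefore suffices to show, for every $[\rho]$ and every $i\le j$,
\[ r_{[\rho]}\bigl(\widehat{\xi\circ\phi}\bigr)_{ij} \geq r_{[\rho]}\bigl(\xi\circ\widehat{\phi}\bigr)_{ij}. \]

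First, fix $y\in {}^tC_{\phi}\subseteq V_\lambda^{\ast}$. Lemma \ref{lem rank matrix for involution} applied to $G$ and $\xi$ gives
\[ r_{[\rho]}(\xi\circ\widehat{\phi})_{ij}=\max\{r_{[\rho]}(d\xi(x))_{ij} \mid x\in V_\lambda,\ [x,y]=0\}. \]
Applied to $\GL_n(F)$ with the identity representation and the infinitesimal parameter $\xi\circ\lambda$, it gives
\[ r_{[\rho]}(\widehat{\xi\circ\phi})_{ij}=\max\{r_{[\rho]}(x')_{ij}\mid x'\in V_{\xi\circ\lambda},\ [x',y']=0\} \]
for any $y'\in{}^tC_{\xi\circ\phi}$.

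Second, construct a valid $y'$ and show that every admissible $x$ on the $G$ side yields an admissible $x'$ on the $\GL_n$ side. Set $y'=d\xi(y)$ and $x'=d\xi(x)$.
\begin{itemize}
\item Since $\xi$ intertwines the adjoint actions, $d\xi$ maps $V_\lambda$ into $V_{\xi\circ\lambda}$ and $V_\lambda^\ast$ into $V_{\xi\circ\lambda}^\ast$.
\item Because $d\xi$ is a Lie algebra homomorphism, $[x,y]=0$ implies $[d\xi(x),d\xi(y)]=0$.
\item We need $d\xi(y)\in {}^tC_{\xi\circ\phi}$, i.e.\ that the transpose is compatible with $d\xi$. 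Concretely, choose $y$ to be the image under the transpose isomorphism of $X_\phi$; one can take $y=X_\phi^-=d\phi(\begin{smallmatrix}0&0\\1&0\end{smallmatrix})$. Then $d\xi(y)=d(\xi\circ\phi)(\begin{smallmatrix}0&0\\1&0\end{smallmatrix})$ is the corresponding transposed element for $\xi\circ\phi$.
\end{itemize}
With these choices, the max over the $G$ side is taken over a subset of the values in the max over the $\GL_n$ side, which gives the desired inequality.

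The main obstacle is the third point. The identification $V_\lambda\cong V_\lambda^\ast$ comes from a chosen Killing form, so it is not obvious that ${}^tC$ is the orbit of the opposite nilpotent. I would argue that the transpose orbit is independent of the choice of nondegenerate invariant form up to the $H_\lambda$-action, and that ${}^tC_\phi$ contains the lowering element of an $\mathfrak{sl}_2$-triple through $X_\phi$. Equivalently, the pairing between $V_\lambda$ and $V_\lambda^\ast$ identifies the orbit of $X_\phi$ with that of $X^-_\phi$, since both are determined by the same $\SL_2$-type data. The same description on the $\GL_n$ side, via $\xi\circ\phi$, then shows that $d\xi$ respects transposes. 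If this identification proves delicate, the fallback is to use the fact that both orbits are determined by their Jordan/rank data and to check the rank matrices of $d\xi(y)$ directly against those of ${}^tC_{\xi\circ\phi}$.
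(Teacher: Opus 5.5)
Your proposal is correct and follows the paper's proof: apply Lemma \ref{lem rank matrix for involution} once for $(G,\xi)$ and once for $\GL_n(F)$ with the identity representation, then note that $d\xi$ maps $\{x\in V_\lambda : [x,y]=0\}$ into $\{x'\in V_{\xi\circ\lambda} : [x',d\xi(y)]=0\}$, so the maxima compare entrywise. The paper silently assumes that $d\xi(y)$ lies in ${}^tC_{\xi\circ\phi}$. Your explicit choice $y=X_\phi^-$, together with identifying ${}^tC_\phi$ as the $H_\lambda$-orbit of the lowering element, is a sound way to make that step precise.
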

\begin{proof}
We compare the rank matrices of these $L$-parameters. Let $y \in C_{\phi}$ be arbitrary. By Lemma \ref{lem rank matrix for involution}, we have
\begin{align*}
r_{[\rho]}(\xi \circ \widehat{\phi})_{ij} &= \max \left\{ r_{[\rho]}(d\xi(z))_{ij} \;\middle|\; z \in \mfr{g}^{\vee},\ \Ad(\lambda(w))z = |w|z,\ [z, y] = 0 \right\}, \\
r_{[\rho]}(\widehat{(\xi \circ \phi)})_{ij} &= \max \left\{ r_{[\rho]}(x)_{ij} \;\middle|\; x \in \mathfrak{gl}_{n},\ \Ad(\xi \circ \lambda(w))x = |w|x,\ [x, d\xi(y)] = 0 \right\}.
\end{align*}
The second formula follows by applying the lemma to the case $H = \GL_n(F)$ with the identity map $\xi_H: H^{\vee} \to H^{\vee}$, and to the $L$-parameter $\xi \circ \phi \in \Phi(H)$. 

On the other hand, since $\xi$ is a homomorphism of Lie groups, we have the inclusion
\[
\left\{ d\xi(z) \;\middle|\; z \in \mfr{g}^{\vee},\ \Ad(\lambda(w))z = |w|z,\ [z,y]=0 \right\} \subseteq \left\{ x \in \mathfrak{gl}_{n} \;\middle|\; \Ad(\xi \circ \lambda(w))x = |w|x,\ [x, d\xi(y)] = 0 \right\}.
\]
Therefore, $r_{[\rho]}(\widehat{(\xi \circ \phi)})_{ij} \geq r_{[\rho]}(\xi \circ \widehat{\phi})_{ij}$, since the former is the maximum over a set containing the latter. This concludes the proof of the corollary.
\end{proof}

See Example \ref{exmp bad parity} below; the inequality in the above corollary may be strict even if $G$ is a classical group and $\xi=\std$.

\section{Reduction of Pyasetskii involution for classical groups}\label{sec Reduction of Pyasetskii involution for classical groups}
Let $G_n$ be quasi-split $\SO_{2n+1}(F), \Sp_{2n}(F)$ or $\RO_{2n}(F)$. 
Let $\std: {}^L G_n \to \GL_N(\BC)$ be the standard embedding. For an $L$-parameter $\phi$ (resp. infinitesimal parameter $\lambda$) of $G_n$, we write $\phi_{\GL}:= \std \circ \phi$ (resp. $\lambda_{\GL}:= \std \circ \lambda$) as an $L$-parameter (resp. infinitesimal parameter) of $\GL_N(\BC)$. The standard embedding $\std$ identifies $G^{\vee}$ as a subgroup of $\GL_N(\BC)$ that preserves a $\epsilon$-symmetric bilinear form $\langle \cdot , \cdot \rangle$ on $W=\BC^{N}$ (taking identity component if $G_n=\Sp_{2n}(F)$). Then 
\[\mfr{g}^{\vee}= \{ g \in \mfr{gl}(W) \ | \ \langle v_1, g v_2 \rangle+ \langle gv_1,  v_2 \rangle=0, \ \forall v_1,v_2 \in W \}.\]
To make the discussion uniform, we define 
\[ \widetilde{H}_{\lambda}:=\begin{cases}
    H_{\lambda} &\text{ if }G_n=\SO_{2n+1}(F) \text{ or }\RO_{2n}(F),\\
    H_{\lambda} \times \{\pm \id_W\} &\text{ if }G_n =\Sp_{2n}(F).
\end{cases} \]
Observe that the natural map $V_{\lambda}/H_{\lambda} \to V_{\lambda}/ \widetilde{H}_{\lambda}$ is a bijection in all cases and does not affect the computation of the Pyasetskii involution. With this notation, we have the following descriptions:
\begin{align*}
    V_{\lambda} &= \left\{ g \in V_{\lambda_{\GL}} \;\middle|\; \langle v_1, g v_2 \rangle + \langle gv_1, v_2 \rangle = 0 \text{ for all } v_1, v_2 \in W \right\}, \\
    \widetilde{H}_{\lambda} &= \left\{ h \in H_{\lambda_{\GL}} \;\middle|\; \langle h v_1, h v_2 \rangle = \langle v_1, v_2 \rangle \text{ for all } v_1, v_2 \in W \right\}.
\end{align*}

\subsection{Decomposition of Vogan variety}\label{sec decomp}
Fix an infinitesimal parameter $\lambda$ of $G_n$. Recall from \S\ref{sec closure ordering} that we have the decompositions
\[
    \lambda_{\GL} = \bigoplus_{i \in I} \rho_i = \bigoplus_{[\rho] \in R} \bigoplus_{i \in I_{[\rho]}} \rho_i,
\]
where each $W_i \subseteq W$ denotes the underlying space of $\rho_i$, and $W_{[\rho]} := \bigoplus_{i \in I_{[\rho]}} W_i$.
Since $\lambda$ preserves the non-degenerate bilinear form $\langle \cdot, \cdot \rangle$, we can further decompose the index set $R$ as $R = R_{nsd} \sqcup R_{sd}$, where
\[
    R_{nsd} = \{ [\rho] \in R \mid [\rho^\vee] \neq [\rho] \}, \qquad R_{sd} = \{ [\rho] \in R \mid [\rho^\vee] = [\rho] \}.
\]
For $[\rho] \in R_{nsd}$, the restriction of $\langle \cdot, \cdot \rangle$ to $W_{[\rho]} \oplus W_{[\rho^\vee]}$ is still non-degenerate; for $[\rho] \in R_{sd}$, the restriction to $W_{[\rho]}$ remains non-degenerate.

For each $[\rho]$, define the associated infinitesimal parameter of a general linear group by
\begin{align}\label{eq rho part lambda}
    \lambda_{\GL, [\rho]} :=
    \begin{cases}
        \displaystyle\bigoplus_{i \in I_{[\rho]} \sqcup I_{[\rho^\vee]}} \rho_i &\text{if } [\rho] \in R_{nsd},\\[2ex]
        \displaystyle\bigoplus_{i \in I_{[\rho]}} \rho_i &\text{if } [\rho] \in R_{sd}.
    \end{cases}
\end{align}
Let $d_{\rho}$ denote the dimension of $\lambda_{\GL, [\rho]}$.

In what follows, we construct for each $[\rho]$ an infinitesimal parameter $\lambda_{[\rho]}$ associated to a suitable quasi-split classical group $G_{[\rho]}$. Note that $G_{[\rho]}$ may differ in type from $G_n$. By \cite[Theorem 8.1]{GGP12}, to specify $\lambda_{[\rho]}$, it suffices to specify the corresponding $L$-parameter $ \std \circ \lambda_{[\rho]}$ for general linear groups.

Suppose $G_n= \SO_{2n+1}$. Then $d_{\rho}$ must be even since it preserves a symplectic form. Then $G_{[\rho]}\cong \SO_{d_{\rho}+1}(F)$ and $\lambda_{[\rho]}$ is determined by $\std \circ \lambda_{[\rho]}= \lambda_{\GL, [\rho]}$. 
Suppose $G_n = \Sp_{2n}(F)$ or $\RO_{2n}(F)$. 
If $d_{\rho}$ is odd, then $G_{[\rho]} = \Sp_{d_{\rho}-1}(F)$, and $\lambda_{[\rho]}$ is characterized by $\std \circ \lambda_{[\rho]} = \det(\lambda_{\GL, [\rho]}) \otimes \lambda_{\GL, [\rho]}$. If $d_{\rho}$ is even, then $G_{[\rho]}$ is the quasi-split even orthogonal group $\RO(V)$, where the discriminant of $V$ coincides with $\det(\lambda_{\GL, [\rho]})$. In this case, $\lambda_{[\rho]}$ is determined by $\std \circ \lambda_{[\rho]} = \lambda_{\GL, [\rho]}$.

Take a subset $R_{nsd/2} \subset R_{nsd}$ such that $R_{nsd}= \cup_{[\rho] \in R_{nsd/2}} \{[\rho], [\rho^{\vee}]\}$. Then by the description of $V_{\lambda}, \widetilde{H}_{\lambda}$ and \eqref{eq decomp rho GL}, we obtain a decomposition
\begin{align}\label{eq decomp rho classical}
     V_{\lambda}= \bigtimes_{[\rho] \in R_{nsd/2}\sqcup R_{sd}} V_{\lambda_{[\rho]}} ,\ \ \widetilde{H}_{\lambda} =\bigtimes_{[\rho] \in R_{nsd/2}\sqcup R_{sd}} \widetilde{H}_{\lambda_{[\rho]}}.
\end{align}
We define $\phi_{[\rho]} \in \Phi_{\lambda_{[\rho]}}(G_{[\rho]})$ accordingly.

\begin{defn}\label{def rho part}
    For any $L$-parameter $\phi \in \Phi_{\lambda}(G_n)$ and $[\rho]\in R$, we define its $\rho$-isotypic part $\phi_{[\rho]} \in \Phi_{\lambda_{[\rho]}}(G_{[\rho]})$ by 
    \[ C_{\phi}= \bigtimes_{[\rho] \in R_{nsd/2}\sqcup R_{sd} } C_{\phi_{[\rho]}}\]
    under the decomposition \eqref{eq decomp rho classical}. We say $\phi$ is isotypic if $\phi=\phi_{[\rho]}$ for some $[\rho]$.
\end{defn}
 The following lemma reduces the computation of Pyasetskii involution for $\phi$ to $\phi_{[\rho]}$.

\begin{lemma}\label{lem phihat}
Let $\phi \in \Phi_{\lambda}(G_n)$ with Pyasetskii involution $\widehat{\phi}$. For any $[\rho]\in R$, define $\phi_{[\rho]}, (\widehat{\phi})_{[\rho]} \in \Phi_{\lambda_{[\rho]}}(G_{[\rho]})$ as in Definition \ref{def rho part}. Then $(\widehat{\phi})_{[\rho]} $ is the Pyasetskii involution of $\phi_{[\rho]}$.
\end{lemma}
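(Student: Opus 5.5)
The idea is to show that the defining property of the Pyasetskii involution in Lemma \ref{lemma Pyasetskii involution} is compatible with products. We use the decomposition \eqref{eq decomp rho classical} and replace $H_{\lambda}$ by $\widetilde{H}_{\lambda}$, which does not change the orbits. The first step is to check that the dual variety decomposes in the same way as $V_\lambda$:
\[V_{\lambda}^{\ast}=\bigtimes_{[\rho]\in R_{nsd/2}\sqcup R_{sd}} V^{\ast}_{\lambda_{[\rho]}}.\]
This follows from the analogue of \eqref{eq observation decomp Vogan} with $|\cdot|^{-1}$ in place of $|\cdot|$. An element $g\in V^\ast_{\lambda_{\GL}}$ maps $W_i$ into the sum of the $W_j$ with $\rho_j\cong\rho_i|\cdot|^{-1}$, so it preserves each $W_{[\rho]}$, and also each $W_{[\rho]}\oplus W_{[\rho^\vee]}$ in the non-selfdual case. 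The skew-adjointness condition with respect to $\langle\cdot,\cdot\rangle$ then restricts blockwise, because the form is non-degenerate on each block and the blocks are mutually orthogonal.

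The key point is that the Lie bracket respects the block structure. If $v=(v_{[\rho]})$ and $v^\ast=(v^\ast_{[\rho]})$, then $[v,v^\ast]=([v_{[\rho]},v^\ast_{[\rho]}])_{[\rho]}$, since both preserve each block of $W$. Hence $[v,v^\ast]=0$ if and only if $[v_{[\rho]},v^\ast_{[\rho]}]=0$ for every $[\rho]$. The Killing-form transpose $C\mapsto {}^tC$ is also compatible with the product, by the same orthogonality. For this I would fix the isomorphism $V_\lambda\cong V_\lambda^\ast$ blockwise; the choice of invariant form does not matter, since the bijection on orbits is canonical (it is induced by the transpose with respect to $\langle\cdot,\cdot\rangle$).

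With these facts in hand, write $C_\phi=\prod C_{\phi_{[\rho]}}$. After reordering factors we get
\[\Lambda_{C_\phi}=\prod_{[\rho]}\Lambda_{C_{\phi_{[\rho]}}},\qquad \Lambda^\ast_{{}^tC'}=\prod_{[\rho]}\Lambda^\ast_{{}^tC'_{[\rho]}}\]
for any product orbit $C'=\prod C'_{[\rho]}$. The closure of a finite product of subsets of affine spaces is the product of the closures, so
\[\overline{\Lambda_{C_\phi}}=\prod\overline{\Lambda_{C_{\phi_{[\rho]}}}}=\prod\overline{\Lambda^\ast_{{}^t\widehat{C_{\phi_{[\rho]}}}}}=\overline{\Lambda^\ast_{{}^t C'}},\qquad C':=\prod\widehat{C_{\phi_{[\rho]}}}.\]
Here the middle equality is Lemma \ref{lemma Pyasetskii involution}, applied to each $G_{[\rho]}$; Remark \ref{rmk even orthog Pyasetskii dual} covers the case where $G_{[\rho]}$ is even orthogonal. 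The uniqueness in Lemma \ref{lemma Pyasetskii involution} for $G_n$ then gives $\widehat{C_\phi}=C'$, and so $(\widehat\phi)_{[\rho]}=\widehat{\phi_{[\rho]}}$ by Definition \ref{def rho part}.

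The main obstacle is the bookkeeping in the first step. In the non-selfdual case, $V_{\lambda_{[\rho]}}$ lives inside $\mathfrak{gl}(W_{[\rho]}\oplus W_{[\rho^\vee]})$ rather than in a single $\mathfrak{gl}(W_{[\rho]})$. In the symplectic case with $d_\rho$ odd, the group $G_{[\rho]}$ is of different type, and $\lambda_{[\rho]}$ is twisted by $\det$. So one must check that the twist by $\det(\lambda_{\GL,[\rho]})$ does not change $V$, $V^\ast$, $\widetilde H$ or the bracket. This holds because the twist is a character: it changes neither $\mathrm{Ad}(\lambda(w))$ nor the centralizer, and $\widetilde H$ absorbs the $\pm\id$ issue for the resulting $\Sp$ factors.
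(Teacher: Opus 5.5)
Your proposal is correct and is the paper's argument: the paper only says the lemma is a direct consequence of the product decomposition \eqref{eq decomp rho classical}, and you supply the omitted details (the blockwise decomposition of $V_\lambda^\ast$ and of the bracket, the fact that the closure of a product is the product of closures, and uniqueness in Lemma \ref{lemma Pyasetskii involution}). One harmless slip: the transpose with respect to $\langle\cdot,\cdot\rangle$ sends $g\in\mfr{g}^\vee$ to $-g\in V_\lambda$, so it does not map into $V_\lambda^\ast$. The blockwise identification $V_\lambda\cong V_\lambda^\ast$ should instead be the transpose for a compatible inner product such as $(\cdot,\cdot)$ in \S\ref{sec setup}; this transpose preserves rank matrices, which is what makes the orbit bijection canonical.
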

\begin{proof}
    This is a direct consequence of the decomposition \eqref{eq decomp rho classical}.
\end{proof}

\subsection{The non-selfdual case}
In this subsection, we compute the Pyasetskii involution for $\phi_{[\rho]}$ when $[\rho]\neq [\rho^{\vee}]$. Note that in this case $G_{[\rho]}$ is either $\SO_
{2n+1}(F)$ or $\RO_{2n}(F)$. Moreover, $G_{[\rho]}$ must be split since $\phi_{[\rho]}$ factors through a Siegel parabolic subgroup of $G^{\vee}_{[\rho]}$.

\begin{prop}\label{prop nsd}
    Suppose that $\phi \in \Phi(G_n)$ satisfies that $\phi=\phi_{[\rho]}$ and $G_{n}=G_{[\rho]}$ for some $\rho$ such that $[\rho] \neq [\rho^{\vee}]$. Write $\std\circ \phi= \phi_{1} \oplus \phi_{1}^{\vee}$, where $\phi_{1}$ is an $L$-parameter of $\GL_n(F)$ of the form
    \[ \phi_{1}= \bigoplus_{i\in I} \rho\lvert\cdot \rvert^{x_i} \otimes S_{a_i} \]
    with $x_i \in \Z$. Then $\widehat{\phi}$ is determined by $\std \circ \widehat{\phi}= \widehat{\phi_{1}}\oplus \widehat{\phi_1}^{\vee}$. Here $\widehat{\phi_1}$ can be computed by M{\oe}glin--Waldspurger's algorithm $($\cite[\S 2]{MW86}$)$.
\end{prop}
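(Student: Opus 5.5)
The plan is to identify the Vogan variety of $\lambda$, together with the action of $\widetilde H_\lambda$, with the Vogan variety of a general linear group, in a way compatible with the transpose and the Lie bracket. Then Lemma \ref{lemma Pyasetskii involution} transports directly.

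\textbf{Step 1: the splitting.} Since $[\rho]\neq[\rho^\vee]$, the space splits as $W = W_{[\rho]}\oplus W_{[\rho^\vee]}$. Both summands are totally isotropic for $\langle\cdot,\cdot\rangle$, because a nonzero pairing would require $\rho_i^\vee\cong\rho_j$ with $i,j$ in the same class. Hence the form identifies $W_{[\rho^\vee]}\cong W_{[\rho]}^*$, and $\lambda_{\GL}=\lambda_1\oplus\lambda_1^\vee$, where $\lambda_1$ is the infinitesimal parameter of $\phi_1$ on $W_1:=W_{[\rho]}$.

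\textbf{Step 2: the groups and varieties.} By \eqref{eq observation decomp Vogan}, every $h\in H_{\lambda_{\GL}}$ and every $g\in V_{\lambda_{\GL}}$ preserves this splitting. So $h=(h_1,h_2)$ and $g=(g_1,g_2)$ are block diagonal.
\begin{itemize}
\item The isometry condition forces $h_2=(h_1^{-1})^{t}$. Therefore $\widetilde H_\lambda\cong H_{\lambda_1}$. For $\Sp$ there is no extra sign, since $\pm\id$ lies in $H_{\lambda_1}$ as well.
\item The skew condition forces $g_2=-g_1^{t}$. Therefore $V_\lambda\cong V_{\lambda_1}$ via $g\mapsto g_1$, and likewise $V_\lambda^*\cong V_{\lambda_1}^*$.
\item These identifications are equivariant.
\end{itemize}

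\textbf{Step 3: transport of the conormal variety.} For $g=(g_1,-g_1^t)$ and $y=(y_1,-y_1^t)$, the bracket is $[g,y]=([g_1,y_1],\,[g_1^t,y_1^t])$. The second entry is $-[g_1,y_1]^t$, so $[g,y]=0$ if and only if $[g_1,y_1]=0$. Hence $\Lambda_\lambda$ is identified with $\Lambda_{\lambda_1}$, and each $\Lambda_C$, $\Lambda^*_{C^*}$ with its counterpart. The transpose ${}^tC$ is defined via a nondegenerate invariant form. On $\mfr{g}^\vee$ this restricts, up to a scalar, to the trace form pairing $V_{\lambda_1}$ with $V^*_{\lambda_1}$. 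Since the orbit correspondence ${}^t(\cdot)$ is independent of the scalar, it is compatible with the identification. Uniqueness in Lemma \ref{lemma Pyasetskii involution} then gives that the orbit $C_{\widehat\phi}$ corresponds to $C_{\widehat{\phi_1}}$.

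\textbf{Step 4: reading off parameters.} Under the identification, $X_\phi=(X_{\phi_1},-X_{\phi_1}^t)$ describes $\std\circ\phi=\phi_1\oplus\phi_1^\vee$. Hence the orbit $C_{\widehat{\phi_1}}$ corresponds to the $L$-parameter with $\std\circ\widehat\phi=\widehat{\phi_1}\oplus\widehat{\phi_1}^\vee$. The final assertion, that $\widehat{\phi_1}$ is computed by the M{\oe}glin--Waldspurger algorithm, is \cite{MW86}.

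The main obstacle is the bookkeeping in Step 3, namely the transpose map. One must check that the Killing form restricted to the block-diagonal pieces matches the trace pairing on $\mfr{gl}(W_1)$ up to a nonzero scalar. It may help to note that $\widehat C$ does not depend on the choice of nondegenerate invariant pairing between $V_\lambda$ and $V_\lambda^*$.
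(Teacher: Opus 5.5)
Your proposal is correct and follows essentially the paper's argument. The paper phrases your block-diagonal computation as the statement that $V_{\lambda}\subseteq \mathrm{Lie}(L)$ and $H_{\lambda}\subseteq L$, where $L\cong\GL(W_\rho)$ is the Levi of the Siegel parabolic attached to the polarization $W=W_\rho\oplus W_{\rho^\vee}$; it then concludes that $V_{\lambda_1}\to V_\lambda$ and $H_{\lambda_1}\to\widetilde H_\lambda$ are bijections. Your explicit check of the bracket and transpose compatibility is bookkeeping the paper leaves implicit, and your $\Sp$ caveat is vacuous, since $G_{[\rho]}$ is never symplectic in the non-selfdual case.
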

\begin{proof}
The standard embedding $\std: {}^L G_n \to \GL(W) $ realizes $G_n^{\vee}$ as the isometry group of a non-degenerate $\epsilon$-symmetric bilinear form $\langle \cdot,\cdot\rangle$ on $W \cong \BC^{2n}$. Decompose $W= W_{\rho} \oplus W_{\rho^\vee}$, where $W_{\rho}$ (resp. $W_{\rho^{\vee}}$) is the underlying space of the representation $\phi_1$ (resp. $\phi_1^{\vee}$) of $W_F \times \SL_2(\BC)$. Note that since $[\rho]\neq [\rho^{\vee}]$, $W_{\rho}$ and $W_{[\rho]}$ are maximal isotropic and $W= W_{\rho} \oplus W_{\rho^\vee}$ is a complete polarization. We let $P=LU$ be the Siegel parabolic subgroup of $G_n^{\vee}$ that preserves the maximal isotropic space $W_{\rho^{\vee}}$, and $L$ is the Levi subgroup that also preserves $W_{\rho}$. Note that $L \cong \GL(W_{\rho})$.

Define an infinitesimal parameter $\lambda_1$ of $\GL_n(F)$ such that $\std \circ \lambda_{\phi}= \lambda_1 \oplus \lambda_1^{\vee}$ and $\phi_{1} \in \Phi_{\lambda_1}(\GL_n(F))$. Then since the infinitesimal parameter $\lambda_{\phi}$ has image in $L \subset G_{n}^{\vee} \subset \GL(W)$,
we obtain injections
\begin{align*}
    V_{\lambda_1} \hookrightarrow V_{\lambda_{\phi}} \hookrightarrow V_{\std \circ \lambda_{\phi}},\ \ 
    H_{\lambda_1} \hookrightarrow \widetilde{H}_{\lambda_{\phi}} \hookrightarrow H_{\std \circ \lambda_{\phi}}.
\end{align*}
On the other hand, by \eqref{eq decomp rho GL}, we see that $V_{\std \circ \lambda_{\phi}} \subseteq \End(W_{\rho}) \times \End(W_{\rho^{\vee}})$ and $H_{\std \circ \lambda_{\phi}} \subseteq \Aut(W_{\rho}) \times \Aut(W_{\rho^{\vee}})$. As a consequence, $V_{{\lambda}_{\phi}} \subseteq \textrm{Lie}(L)\cong \mfr{gl}(W_{\rho})$ and $H_{\lambda_{\phi}} \subseteq L$. Therefore, the injections $V_{\lambda_1} \hookrightarrow V_{\lambda_{\phi}}$ and $H_{\lambda_1} \hookrightarrow \widetilde{H}_{\lambda_{\phi}} $ are indeed bijections. We conclude that the Pyasetskii involution on $H_{\lambda_{\phi}}/V_{\lambda_{\phi}}$ can be computed from that of $H_{\lambda_1}/V_{\lambda_1}$, which completes the proof of the proposition.
\end{proof}

\subsection{Good and bad parity}

Now let us consider the case $\phi_{[\rho]}$ with $[\rho]=[\rho^{\vee}]$. 
Choose any representative $\rho \in [\rho]$. We can write $\rho= \rho^{u} \lvert\cdot\rvert^{x}$, where $\rho^u$ is unitary and $x \in \mathbb{R}$. Since $[\rho]=[\rho^{\vee}]$, it follows that $\rho^u$ is self-dual and $x \in \frac{1}{2}\mathbb{Z}$. The isomorphism $\rho^{u} \cong (\rho^u)^{\vee}$ induces a non-degenerate $\epsilon_{\rho^{u}}$-symmetric bilinear form on the underlying space of $\rho^{u}$. We define
\begin{align}\label{eq epsilon rho}
    \epsilon_{[\rho]} := \epsilon_{\rho^{u}}(-1)^{2x}.
\end{align}
It is straightforward to check that this definition does not depend on the choice of representative $\rho$.

\begin{defn}\label{def gp bp}
    Suppose that $\phi = \phi_{[\rho]}$ with $[\rho] = [\rho^{\vee}]$. Via the standard embedding $\std: {}^L G_{n} \to \GL(W)$, we may view ${}^L G_{n}$ as a subgroup of the isometry group for a non-degenerate $\epsilon$-symmetric bilinear form $\langle \cdot, \cdot \rangle$ on $W$.
    We say that $\phi$ is of \emph{good parity} if $\epsilon \epsilon_{[\rho]} = 1$, and of \emph{bad parity} if $\epsilon \epsilon_{[\rho]} = -1$.

    Similarly, we say that an infinitesimal parameter $\lambda$ of $G_n$ is of good (resp., bad) parity if it is of good (resp., bad) parity as an $L$-parameter. It is evident that $\phi$ is of good or bad parity if and only if its infinitesimal parameter $\lambda_{\phi}$ is of good or bad parity.
\end{defn}
Though not needed in this paper, we can define a general $L$-parameter $\phi$ of $G_n$ is of good parity if $\phi_{[\rho]}$ is of good parity for any $[\rho] \in R$. This is equivalent to the representations in $\Pi_{\phi}$ are all of good parity as in \cite[Definition 3.1]{HJLLZ24} and \cite[Definition 4.5.5]{LM25}.

\begin{remark}\label{rmk gp} 
We explain the main differences between the good and bad parity cases.
\begin{enumerate}
    \item If $\phi$ is an $L$-parameter of $G_n$, then $\std\circ \phi$ preserves a non-degenerate bilinear form. Hence, $\std\circ \phi$ must be selfdual. 
    The converse is true in the good parity case: Let $\lambda$ be a good parity infinitesimal parameter of $G_n$. For any $\phi_1\in \Phi_{\lambda_{\GL}}(\GL_N(F))$, if $\phi_1$ is selfdual, then there exists a $\phi \in \Phi_{\lambda}(G_n)$ such that $\phi_1= \std \circ \phi$.
    However, this is not true in the bad parity case as shown in Example \ref{exmp bad parity} below.
    \item On the other hand, if $\phi$ is of bad parity, then the centralizer of the image of $\phi$ in $G_n^{\vee}/Z(G_n^{\vee})$ is connected \textnormal{(see \cite[\S 4]{GGP12})}. As a consequence, the $L$-packet of $\Pi_{\phi}$ is always a singleton. 
\end{enumerate}    
\end{remark}

\begin{exmp}\label{exmp bad parity}
    Let $\rho$ denote the trivial representation, and consider an infinitesimal parameter $\lambda$ of $\SO_{7}(F)$ such that
\[
    \lambda_{\GL}= \rho\, \lvert\cdot\rvert^{1} \oplus (\rho)^{4} \oplus \rho\,\lvert\cdot\rvert^{-1},
\]
where $(\rho)^4$ indicates four copies of $\rho$. The set $\Phi_{\lambda_{\GL}}(\GL_6(F))$ consists of five $L$-parameters:
\begin{align*}
    \phi_0 &= \rho \otimes S_3 \oplus (\rho \otimes S_1)^{3}, \\
    \phi_1 &= \rho\,\lvert\cdot\rvert^{1/2} \otimes S_2 \oplus \rho\,\lvert\cdot\rvert^{-1/2} \otimes S_2 \oplus (\rho \otimes S_1)^{2}, \\
    \phi_2 &= \rho\,\lvert\cdot\rvert^{1/2} \otimes S_2 \oplus \rho\,\lvert\cdot\rvert^{-1} \otimes S_1 \oplus (\rho \otimes S_1)^{3}, \\
    \phi_3 &= \rho\,\lvert\cdot\rvert^{-1/2} \otimes S_2 \oplus \rho\,\lvert\cdot\rvert^{1} \otimes S_1 \oplus (\rho \otimes S_1)^{3}, \\
    \phi_4 &= \lambda_{\GL} \otimes S_1.
\end{align*}
Among these, $\phi_0$, $\phi_1$, and $\phi_4$ are self-dual, while $\phi_2^{\vee} = \phi_3$. However, $\phi_0$ does not preserve a skew-symmetric form. Therefore, $\Phi_{\lambda}(\SO_{7}(F)) = \{\phi^1,\,\phi^4\}$, where $\std \circ \phi^1 = \phi_1$ and $\std \circ \phi^4 = \phi_4$. These correspond to the unique open and closed orbits, respectively, so we have $\widehat{\phi^1} = \phi^4$. On the other hand, for $\Phi(\GL_6(F))$, we have
\[
    \widehat{\phi_0} = \phi_4, \quad \widehat{\phi_1} = \phi_1, \quad \widehat{\phi_2} = \phi_3.
\]
\end{exmp}

\subsection{Unramification}
To conclude this section, we reduce the computation of $\widehat{\phi}$ in both the good and bad parity cases to that of an unramified $L$-parameter $\widehat{\phi^{\ur}}$ (i.e., trivial on the inertia subgroup $I_F$) for a suitably chosen group $G^{\ur}_n$.

\begin{defn}\label{defn unramification}
    Suppose that $\phi=\phi_{[\rho]}$ with $[\rho]= [\rho^{\vee}]$. Via the standard embedding $\std: {}^L G_{n} \to \GL(W)$, we identify ${}^L G_{n}$ as a subgroup of an isometry group of a $\epsilon$-symmetric non-degenerate bilinear form $\langle \cdot,\cdot\rangle$ on $W$. Write
    \[ \std\circ \phi= \bigoplus_{i \in I} \rho^u \lvert\cdot\rvert^{x_i} \otimes S_{a_i},\]
    where $\rho^u$ is selfdual and $x_i \in \half{1}\Z$.
    \begin{enumerate}
        \item Define $G^{\ur}_n$ to be  split $\SO_{2m+1}$, $\Sp_{2m}$ or $\RO_{2m}$  whose complex dual group is a subgroup of the isometry group of an $\epsilon\epsilon_{\rho^u}$-symmetric non-degenerate bilinear form on $W^{\ur}\cong \BC^{\sum_{i\in I} a_i}$.
        \item Define $\phi^{\ur}$ to be the $L$-parameter of $G^{\ur}_n$ such that
        \[ \std \circ \phi^{\ur}= \bigoplus_{i \in I}\lvert\cdot\rvert^{x_i} \otimes S_{a_i}.\]
    \end{enumerate}
\end{defn}
By definition, $\phi$ has good parity if and only if $\phi^{\ur}$ has good parity.

\begin{prop}\label{prop unramification}
     Suppose that $\phi=\phi_{[\rho]}$ with $[\rho]= [\rho^{\vee}]$. We have $(\widehat{\phi})^{\ur}= \widehat{(\phi^{\ur})}$.
\end{prop}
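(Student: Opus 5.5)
The plan is to construct an explicit isomorphism between the pairs $(V_{\lambda}, \widetilde{H}_{\lambda})$ and $(V_{\lambda^{\ur}}, \widetilde{H}_{\lambda^{\ur}})$, where $\lambda=\lambda_\phi$ and $\lambda^{\ur}=\lambda_{\phi^{\ur}}$. This isomorphism should be compatible with the dual spaces $V^{\ast}$ and with the Lie bracket pairing $V\times V^\ast\to \mfr{g}^\vee$. Once we have it, the defining property of Lemma \ref{lemma Pyasetskii involution} transports directly, and hence so does $\phi\mapsto\widehat\phi$.

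First, write $W_{[\rho]}=\bigoplus_{t} W_{\rho,t}$. By Schur's lemma, each $W_{\rho,t}\cong U\otimes M_t$, where $U$ is the underlying space of $\rho^u$ and $M_t=\Hom_{W_F}(\rho^u|\cdot|^t, W)$ is a multiplicity space. Then $V_{\lambda_{\GL}}=\prod_t \Hom(M_t,M_{t+1})$ and $H_{\lambda_{\GL}}=\prod_t\GL(M_t)$, and likewise for $V^\ast$ with $t\mapsto t-1$. These are exactly the corresponding objects for the unramified parameter on $M=\bigoplus_t M_t\cong W^{\ur}$.

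Next, identify the form. The self-duality $\rho^u\cong(\rho^u)^\vee$ gives an $\epsilon_{\rho^u}$-symmetric form $\beta$ on $U$, unique up to scalar. Since $\langle\cdot,\cdot\rangle$ is $W_F$-invariant, it pairs $W_{\rho,t}$ with $W_{\rho,-t}$, so it factors as $\beta\otimes\langle\cdot,\cdot\rangle^{\ur}$ for a unique form $\langle\cdot,\cdot\rangle^{\ur}$ on $M$. This form pairs $M_t$ with $M_{-t}$ and is $\epsilon\epsilon_{\rho^u}$-symmetric, matching Definition \ref{defn unramification}(1). Under $g\mapsto \id_U\otimes g$, the conditions of being skew-adjoint (for $V$, $V^\ast$) or an isometry (for $\widetilde H$) with respect to $\langle\cdot,\cdot\rangle$ become the same conditions with respect to $\langle\cdot,\cdot\rangle^{\ur}$. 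This gives isomorphisms $V_{\lambda}\cong V_{\lambda^{\ur}}$, $V_{\lambda}^\ast\cong V_{\lambda^{\ur}}^\ast$ and $\widetilde H_\lambda\cong\widetilde H_{\lambda^{\ur}}$. They are equivariant, and they commute with brackets because $[\id\otimes a,\id\otimes b]=\id\otimes[a,b]$. The transpose map $C\mapsto {}^tC$ is compatible as well: in the matrix description it is the adjoint with respect to the form, so it can be taken to correspond, and in any case Lemma \ref{lemma Pyasetskii involution} only uses the orbit bijection.

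Then check that the map on orbits sends $C_\phi$ to $C_{\phi^{\ur}}$. The nilpotent $X_\phi$ equals $\id_U\otimes X$, where $X$ acts on the multiplicity spaces as the nilpotent of $\bigoplus|\cdot|^{x_i}\otimes S_{a_i}$, that is, as $X_{\phi^{\ur}}$. The same reasoning applies to $\widehat\phi$. Consequently $\overline{\Lambda_{C_\phi}}=\overline{\Lambda^\ast_{{}^tC_{\widehat\phi}}}$ transports to the corresponding identity for $\phi^{\ur}$ and $(\widehat\phi)^{\ur}$, and uniqueness in Lemma \ref{lemma Pyasetskii involution} gives the claim.

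The main obstacle is the bookkeeping of groups. We must confirm that $\widetilde H$ (with $\pm\id$ adjoined for $\Sp$, and full $\RO$ rather than $\SO$ for even orthogonal groups) matches exactly on both sides; in particular, the determinant or component condition can change when $\dim U$ is even. Passing to $\widetilde H$ and the orthogonal rather than special orthogonal groups is designed to make both sides the full isometry groups, so the identification becomes clean. If a determinant condition survives, I would handle it using the fact (noted above) that $V/H\to V/\widetilde H$ is a bijection.
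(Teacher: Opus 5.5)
Your proposal is correct and takes essentially the same route as the paper, whose entire proof is the observation that, by Schur's lemma, $g\mapsto \id_{W_\rho}\otimes g$ induces isomorphisms $V_{\lambda^{\ur}}\cong V_\lambda$ and $\widetilde H_{\lambda^{\ur}}\cong \widetilde H_\lambda$, from which the statement follows. Your additional details just make explicit what the paper leaves implicit: factoring the form as $\beta\otimes\langle\cdot,\cdot\rangle^{\ur}$, compatibility with $V^\ast$ and brackets, and $X_\phi=\id_U\otimes X_{\phi^{\ur}}$. The determinant worry does not arise, since $\widetilde H$ is the full isometry centralizer on both sides.
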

\begin{proof}
    With the notation from Definition \ref{defn unramification}, we have $\phi_{\GL} = \rho \otimes \phi_{\GL}^{\ur}$ (tensor product of representations of $W_F$), so $W = W_{\rho} \otimes W^{\ur}$. By Schur's lemma, the map $f \mapsto \id_{W_{\rho}} \otimes f$ from $\End(W^{\ur})$ to $\End(W)$ induces natural isomorphisms
    \[
        V_{\lambda^{\ur}} \xrightarrow{\;\sim\;} V_{\lambda}, \qquad \widetilde{H}_{\lambda^{\ur}} \xrightarrow{\;\sim\;} \widetilde{H}_{\lambda}.
    \]
    The proposition follows.
    \end{proof}

\section{A lemma on involutions on finite partially ordered sets}\label{sec A lemma on involutions on finite partially ordered sets}
We state and prove a simple lemma on involutions on finite partially ordered sets. It will be used to identify the Pyasetskii involution with other known involutions by comparing their images with respect to the closure ordering. In the bad parity case, it saves us from modifying the most technical parts of the proof in \cite[Proposition II.6]{MW86} from $\GL_n$ to classical groups.

\begin{lemma}\label{lem involution poset}
Let $(S, \geq)$ be a finite partially ordered set. Suppose that there are two involutions $\iota_1, \iota_2$ on $S$ such that for any $s \in S$, $ \iota_1(s) \geq \iota_2(s).$ Then $\iota_1=\iota_2$.
\end{lemma}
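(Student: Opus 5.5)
The plan is to compose the two involutions and use finiteness to force equality along orbits.

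Set $\sigma := \iota_2^{-1}\circ\iota_1 = \iota_2\circ\iota_1$. This is a permutation of the finite set $S$. Applying the hypothesis to the element $\iota_1(s)$ gives
\[ s=\iota_1(\iota_1(s)) \geq \iota_2(\iota_1(s)) = \sigma(s) \]
for every $s\in S$. Hence $s \geq \sigma(s)$ for all $s$.

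Next we iterate this inequality. Since $\sigma$ is a permutation of a finite set, some power $\sigma^k$ with $k\geq 1$ equals the identity. Using transitivity of $\geq$, we obtain the chain
\[ s \geq \sigma(s) \geq \sigma^2(s) \geq \cdots \geq \sigma^k(s) = s. \]
By antisymmetry, every inequality in this chain is an equality. In particular $\sigma(s)=s$, that is, $\iota_2(\iota_1(s)) = s$.

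Applying $\iota_2$ to both sides, and using that $\iota_2$ is an involution, gives $\iota_1(s)=\iota_2(s)$ for all $s$. Therefore $\iota_1=\iota_2$.

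The only point needing care is the first step. The hypothesis must be applied at $\iota_1(s)$ rather than at $s$, so that the inequality compares $s$ itself with its image under $\sigma$. Once that is done, the argument is routine.
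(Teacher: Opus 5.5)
Your proof is correct, and it concludes by a different route than the paper. Both arguments start from the same key inequality $s=\iota_1(\iota_1(s))\geq \iota_2(\iota_1(s))$, obtained by applying the hypothesis at $\iota_1(s)$.

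The paper then argues by minimal counterexample. It takes a minimal element $s$ of the disagreement set $S'=\{s\mid \iota_1(s)\neq\iota_2(s)\}$ and notes that the inequality must be strict there. Minimality then gives $\iota_2\iota_1(s)\notin S'$, which it unwinds into the contradiction $\iota_1(s)=\iota_2(s)$.

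You avoid contradiction entirely. You isolate the general fact that a permutation $\sigma$ of a finite poset with $\sigma(s)\leq s$ for all $s$ is the identity, by running around a cycle $s\geq\sigma(s)\geq\cdots\geq\sigma^k(s)=s$ and using antisymmetry. This is cleaner, and it shows that the involution hypothesis is essentially unnecessary. Set $\tau:=\iota_1\circ\iota_2^{-1}$ and apply the hypothesis at $\iota_2^{-1}(t)$ to get $\tau(t)\geq t$ for all $t$. The same cycle argument then gives $\iota_1=\iota_2$ for any two bijections of $S$. So your remark that the hypothesis ``must be applied at $\iota_1(s)$'' reflects your particular choice of $\sigma$, not a real constraint.

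The paper's version is phrased for involutions, which is all its applications need: the Pyasetskii involution compared with $\widetilde{\phi}$ or with $\AZ(\phi)$. Your version makes it clear that, in those applications, verifying bijectivity of the comparison map would already have sufficed.
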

\begin{proof}
Suppose for a contradiction that the set
\[ S':=\{ s \in S \mid \iota_1(s) \neq \iota_2(s) \} \]
is non-empty, and choose a minimal element $s \in S'$. Then
\[ s = \iota_1 \circ \iota_1(s) \geq \iota_2 \circ \iota_1(s). \]
Equality cannot hold, since applying $\iota_2$ to both sides would give $\iota_2(s) = \iota_1(s)$. Thus $\iota_2 \circ \iota_1(s) \notin S'$ by the minimality of $s$, and so
\[ \iota_1(s) = \iota_2 \circ \iota_2 \circ \iota_1(s) = \iota_2(\iota_2 \circ \iota_1(s)) = \iota_1(\iota_2 \circ \iota_1(s)). \]
Applying $\iota_2 \circ \iota_1$ to both sides yields $\iota_2(s) = \iota_1(s)$, a contradiction. 
\end{proof}
\section{Aubert-Zelevinsky involution}\label{sec AZ}

We recall the definition of Aubert-Zelevinsky involution. Let $\RG$ be a connected reductive group defined over $F$. For any element $\Pi$ in the Grothendieck group $K\Pi(G)$, we define
\begin{align}\label{eq def AZ}
    \AZ(\Pi):=\sum_P (-1)^{\mathrm{dim}(A_P)}\mathrm{Ind}_{P}^{G}(\textrm{Jac}_P(\Pi))
\end{align}
Here the sum is over all standard parabolic subgroups $P$ of $G$ and $A_P$ is the maximal split torus of the center of the Levi subgroup of $P.$
It is proved in \cite{Aub95} that if $\pi\in \Pi(G)$, then there exists a $\varepsilon\in\{\pm 1\}$ such that $\AZ(\pi)=\varepsilon \widehat{\pi}$ is an irreducible representation of coefficient $1$. Moreover, $\widehat{\widehat{\pi}}=\pi$.
We call $\widehat{\pi}$ the Aubert-Zelevinsky dual or Aubert-Zelevinsky involution of $\pi.$

\section{The good parity case}\label{sec The good parity case}
We compute $\widehat{\phi}$ when $\phi$ is isotypic and of good parity in this section.
\begin{prop}\label{prop involution of orbit gp}
    Suppose $\lambda$ is an isotypic infinitesimal parameter of $G_n$ of good parity. For any $\phi \in \Phi_{\lambda}(G_n)$, the following holds.
    \begin{enumerate}
        \item [(a)] There exists a unique $L$-parameter $\widetilde{\phi} \in \Phi_{\lambda}(G_n)$ such that 
        \[\widehat{(\std\circ \phi)}= \std \circ \widetilde{\phi}.\]
        \item [(b)]We have $\widehat{\phi}= \widetilde{\phi}$.
    \end{enumerate}
\end{prop}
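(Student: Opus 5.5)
For part (a), uniqueness is immediate because $\std_\ast$ is injective (Remark \ref{rmk weakly equivalent}(1)). For existence, I use Remark \ref{rmk gp}(1): since $\lambda$ has good parity, every selfdual $\phi_1\in\Phi_{\lambda_{\GL}}(\GL_N(F))$ equals $\std\circ\phi$ for some $\phi\in\Phi_\lambda(G_n)$. So it suffices to show that $\widehat{(\std\circ\phi)}$ is selfdual.

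To see this, I use the involution $\theta$ of $\Phi_{\lambda_{\GL}}(\GL_N(F))$ given by $\phi_1\mapsto\phi_1^\vee$. It is well defined because $\lambda_{\GL}$ is selfdual. Geometrically, $\theta$ comes from the anti-automorphism $x\mapsto -{}^\ast x$, the adjoint with respect to $\langle\cdot,\cdot\rangle$. This map sends $V_{\lambda_{\GL}}$ to itself and preserves $H_{\lambda_{\GL}}$-orbits up to the outer twist $h\mapsto ({}^\ast h)^{-1}$. It also preserves the conormal variety, since it commutes with brackets up to sign and is compatible with the Killing form. Hence $\theta$ commutes with the Pyasetskii involution on $\GL_N$.

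A cheaper combinatorial route is also available. On multisegments, $\theta$ acts by $\Delta\mapsto\Delta^\vee$, and one can check that the M{\oe}glin--Waldspurger algorithm commutes with this operation. Indeed, reversing all segments turns the ``highest endpoint first'' procedure into the ``lowest endpoint first'' one, and MW86 shows the result is independent of this choice. Either way, $\widehat{\phi_1^\vee}=\widehat{\phi_1}^\vee$. Applying this with $\phi_1=\std\circ\phi=\phi_1^\vee$ shows that $\widehat{\phi_1}$ is selfdual, which proves (a).

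For part (b), I apply Lemma \ref{lem involution poset} to the finite poset $(\Phi_\lambda(G_n),\geq_C)$ with $\iota_1(\phi)=\widetilde{\phi}$ and $\iota_2(\phi)=\widehat{\phi}$.

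First, $\iota_2$ is an involution by Lemma \ref{lemma Pyasetskii involution}, together with Remark \ref{rmk even orthog Pyasetskii dual} in the orthogonal case.

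Next, $\iota_1$ is an involution because $\std\circ\widetilde{\widetilde\phi}=\widehat{\std\circ\widetilde\phi}=\widehat{\widehat{\std\circ\phi}}=\std\circ\phi$, and $\std_\ast$ is injective.

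Finally, Corollary \ref{cor hat GL classical} with $\xi=\std$ gives $\std\circ\widetilde\phi=\widehat{(\std\circ\phi)}\geq_C\std\circ\widehat\phi$. By Lemma \ref{lem closure ordering for classical groups}, this says $\widetilde\phi\geq_C\widehat\phi$ in $\Phi_\lambda(G_n)$. Lemma \ref{lem involution poset} then yields $\widetilde\phi=\widehat\phi$.

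The main obstacle is showing that $\widehat{(\std\circ\phi)}$ is selfdual, i.e.\ that the Pyasetskii involution on $\GL_N$ commutes with taking contragredients. If the geometric argument via the adjoint anti-involution gets tangled with the Killing-form identification ${}^tC$, I will fall back on the rank-matrix description. There, duality acts by the anti-diagonal transpose $r_{ij}\mapsto r_{l+1-j,\,l+1-i}$, and Lemma \ref{lem rank matrix for involution} applied to $\GL_N$ is visibly symmetric under this transpose, since the condition $[x,y]=0$ is preserved when both $x$ and $y$ are replaced by their adjoints.
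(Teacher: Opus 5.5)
Your proof is correct. Part (b) is the paper's argument verbatim; you also make explicit two points the paper leaves implicit, namely that $\phi\mapsto\widehat\phi$ is an involution and that uniqueness in (a) follows from injectivity of $\std_\ast$.

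The genuine difference is in the key step of (a), showing $\widehat{\phi_1^\vee}=\widehat{\phi_1}^\vee$ on $\Phi_{\lambda_{\GL}}(\GL_N(F))$.

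\emph{The paper's route} is representation-theoretic. By \cite{MW86}, the Pyasetskii involution for $\GL_N$ matches the Zelevinsky (Aubert--Zelevinsky) involution under the local Langlands correspondence. The latter commutes with contragredients because parabolic induction and Jacquet modules do.

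\emph{Your route} is geometric. The map $\sigma(x)=-{}^\ast x$ is a Lie algebra involution of $\mathfrak{gl}_N$ with the following properties:
\begin{itemize}
\item it preserves $V_{\lambda_{\GL}}$ and $V^\ast_{\lambda_{\GL}}$, since $\lambda_{\GL}$ preserves the form;
\item it intertwines the $H_{\lambda_{\GL}}$-action with its twist by $h\mapsto({}^\ast h)^{-1}$;
\item it preserves brackets, hence the conormal variety;
\item it sends $X_{\phi_1}$ to $X_{\phi_1^\vee}$, realizing $\phi_1^\vee$ on the same space via the form.
\end{itemize}
On rank matrices, $\sigma$ acts by the anti-diagonal flip on both $V$ and $V^\ast$. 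This settles your worry about the identification $C\mapsto{}^tC$, because for $\GL_N$ that identification just matches orbits with equal composition ranks.

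Comparing the two: your argument is self-contained and avoids the deep input of \cite{MW86} and the local Langlands correspondence for $\GL_N$. The paper's argument is shorter given those citations.

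I would drop the ``cheaper combinatorial route.'' The claim that the M{\oe}glin--Waldspurger algorithm gives the same answer whether run from the top or from the bottom endpoint needs its own justification. The standard reason it holds is precisely that the algorithm computes the contragredient-compatible Zelevinsky involution, which is just the paper's argument again.
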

\begin{proof}
    We first prove Part (b) assuming Part (a). Recall that the map $\phi \mapsto \std \circ \phi$ gives an order-preserving injection from $\Phi_{\lambda}(G_n)$ into $\Phi_{\lambda_{\GL}}(\GL_N(F))$. Since 
    \[\std \circ \widetilde{\widetilde{\phi}}= \widehat{(\std \circ \widetilde{\phi})}= \widehat{\widehat{(\std \circ \phi)}} = \std \circ \phi, \]
    we see that $\widetilde{\widetilde{\phi}}= \phi$. Namely, the map $ \phi \mapsto \widetilde{\phi}$ is an involution on $\Phi_{\lambda}(G_n)$. Moreover, Corollary \ref{cor hat GL classical} implies that
    \[\std \circ \widetilde{\phi} =\widehat{(\std\circ \phi)} \geq_C \std \circ \widehat{\phi}.\]
    for any $\phi\in \Phi_{\lambda}(G_n)$. Thus, Lemma \ref{lem closure ordering for classical groups} implies that $\widetilde{\phi}\geq_C \widehat{\phi}$ for any $\phi\in \Phi_{\lambda}(G_n)$.
        Therefore, Lemma \ref{lem involution poset} implies that $\widehat{\phi}=\widetilde{\phi}$ for any $\phi$, which proves Part (b).

    For Part (a), since $\lambda$ is of good parity, it suffices to show that $\phi_{\GL}$ is selfdual if and only if $\widehat{\phi_{\GL}}$ is selfdual (see Remark \ref{rmk gp}(1)). This follows from the following facts:
    \begin{itemize}
        \item  As shown in \cite{MW86}, the Pyasetskii involution for $L$-parameters and the Aubert-Zelevinsky involution for representations of $\GL_n(F)$ commute via the local Langlands correspondence.
        \item The Aubert-Zelevinsky involution commutes with taking contragredients, since both parabolic induction and the formation of Jacquet modules in \eqref{eq def AZ} commute with contragredients.
    \end{itemize}
    This completes the proof of the proposition. 
\end{proof}

\begin{remark}
    The above proposition fails without assuming $\lambda$ is of good parity. Indeed, in Example \ref{exmp bad parity}, we have $\widehat{\std \circ \phi^4}=\widehat{\phi_4}= \phi_0$, which does not come from an $L$-parameter of $\SO_{7}(F)$.
\end{remark}

\section{The bad parity case}\label{sec bad}
In this section, we show that Pyasetskii involution in the bad parity case is given by the formula for the Aubert-Zelevinsky involution for bad parity representations in \cite[\S 5.2]{LM25} (see Algorithm \ref{algo LM25} below). More precisely, we prove the following theorem.

\begin{thm}\label{thm bad}
Suppose $\phi$ is an isotypic $L$-parameter of $G_n$ of bad parity. Then \[\widehat{\phi} = \AZ(\phi),\] where $\AZ(\phi)$ is defined in Definition~\ref{def AZ phi bad} below and can be computed explicitly by Algorithm~\ref{algo LM25}.
\end{thm}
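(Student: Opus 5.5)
By Proposition \ref{prop unramification}, we may assume that $\phi$ is unramified. Then $\std\circ\phi=\bigoplus_{i\in I}\lvert\cdot\rvert^{x_i}\otimes S_{a_i}$, and the bilinear form on $W$ has the parity opposite to the one that would make the $\lvert\cdot\rvert^{t}$-weight spaces pair in the good parity way.

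The plan is to use Lemma \ref{lem involution poset} on the poset $(\Phi_\lambda(G_n),\geq_C)$ with $\iota_1=\AZ$ and $\iota_2=\widehat{\ \cdot\ }$. We take as given, from Definition \ref{def AZ phi bad} and \cite{LM25}, that $\AZ$ is a well-defined involution on $\Phi_\lambda(G_n)$. It comes from the Aubert--Zelevinsky involution of the singleton $L$-packets (Remark \ref{rmk gp}(2)) and is computed combinatorially by Algorithm \ref{algo LM25}. The Pyasetskii map is an involution by Lemma \ref{lemma Pyasetskii involution}. Hence everything reduces to the single inequality
\[ \AZ(\phi)\geq_C \widehat{\phi}\qquad\text{for all }\phi\in\Phi_\lambda(G_n). \]
By Lemma \ref{lem closure ordering for classical groups}, this is the rank-matrix inequality $r(\AZ(\phi))\geq r(\widehat{\phi})$.

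To get this inequality, we use Lemma \ref{lem rank matrix for involution} with $\xi=\std$. For $y\in {}^tC_\phi$, each entry $r(\widehat\phi)_{ij}$ equals the maximum of $r(x)_{ij}$ over $x\in V_\lambda$ with $[x,y]=0$. It therefore suffices to show that every such $x$ satisfies $r(x)\leq r(\AZ(\phi))$ entrywise, i.e.\ that the orbit of every $x$ commuting with $y$ lies in $\overline{C_{\AZ(\phi)}}$. This is the ``simpler half'' of \cite[Proposition II.6]{MW86}, namely the upper bound on the ranks of elements of the centralizer. In the $\GL_n$ case, Mœglin--Waldspurger argue by induction. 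They strip off the segments used in the first step of their algorithm (a maximal chain ending at the top exponent $e^{\max}$) and show that, for $x$ commuting with $y$, the restriction of $x$ to the top weight space and its images has rank bounded by the length of that chain. They then pass to a quotient/sub Vogan variety attached to the remaining multisegment and apply induction.

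I will mimic this for the self-adjoint setting. Choose $y$ in normal form, adapted to a Jordan basis for the multisegment of $\phi$ that is compatible with the form $\langle\cdot,\cdot\rangle$. In bad parity, the weight spaces $W_t$ and $W_{-t}$ are paired, and $W_0$ (when present) carries a form of the wrong symmetry. Because of this, self-dual segments can only occur in pairs, and chains through $0$ behave differently. The first step of Algorithm \ref{algo LM25} removes a symmetric pair of chains (or a modified chain crossing $0$), exactly mirroring the first step of the algorithm. For the induction step, given $x\in V_\lambda$ commuting with $y$, I bound the ranks of the composites $x^{j-i+1}$ restricted to $W_{e^{\max}-j}$ by the corresponding entries contributed by the removed chains. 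I then pass to the nondegenerate subquotient $U^\perp/U$, where $U$ is the isotropic subspace spanned by the removed chain. This subquotient is again a bad parity Vogan variety for a smaller $G_m$, with $y$ and $x$ descending, so induction on $n$ applies. Additivity of rank matrices (Example \ref{exmp rank matrix GL}) then assembles the bound.

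The main obstacle is this induction step. Specifically, I must check that the isotropic subspace removed by the Lanard--Mínguez step is $x$-stable modulo the right filtration, and that the rank bound near weight $0$ matches the bad parity modification of the algorithm. Because of the skew/symmetric mismatch, the Mœglin--Waldspurger chain alone would overcount, as Example \ref{exmp bad parity} already shows. I would first verify the argument on small cases such as Example \ref{exmp bad parity}, and then handle the chain through $0$ by using that an $\epsilon$-adjoint nilpotent map of the wrong parity on $W_{-1}\oplus W_0\oplus W_1$ has even rank constraints.
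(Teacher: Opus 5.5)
Your outer reduction is the same as the paper's. You unramify, apply Lemma \ref{lem involution poset} to $\AZ$ and $\phi\mapsto\widehat{\phi}$, reduce to $r(\widehat{\phi})\leq r(\AZ(\phi))$, and prove this by induction after removing the chain $\{0,\dots,r\}$ of Steps 2--3 of Algorithm \ref{algo LM25}. The induction passes to $U_1^\perp/U_1\cong W^-\cong W_{\mfr{m}^{\#}}$, where $U_1$ must be spanned by the top vectors $\Delta_k(d-k)$, not by the whole segments. However, the induction step you defer is the entire content of the proof, and two ingredients are missing.

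In the paper's normalization, $f\in C_\phi$ plays the role of your $y$, and a \emph{generic} $g\in V_\lambda^\ast\cap C(f)$ plays the role of your $x$. Density makes genericity sufficient, and genericity is genuinely used.

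\textbf{First missing ingredient: normalizing $g$ along the chain.} One must show that $g$ can be conjugated by $H_\lambda\cap C(f)$ so that $\Delta_k(d-k)\mapsto\Delta_{k+1}(d-k-1)$ for $k<r$ and $\Delta_r(d-r)\mapsto 0$ (Lemma \ref{lem first conjugation}). This rests on Proposition \ref{prop g r+1=0}: $g^{r+1}(\Delta_0(d))=0$, and for $1\leq s\leq r$ the vector $g^s(\Delta_0(d))$ only involves $\Delta_j$ with $l(\Delta_j)\geq l(\Delta_s)$. That proposition in turn needs two inputs:
\begin{itemize}
\item the vanishing $\psi^s_{i,i^\vee}(g)=0$ of Lemma \ref{lem i i vee};
\item the combinatorial Lemma \ref{lem r+1}: a precedence chain from $\Delta_0$ that reaches level $r+1$, or lands at some level $k\leq r$ on a segment shorter than $\Delta_k$, must pass through some pair $\{q,q^\vee\}$.
\end{itemize}
Consequently the paths group into families whose total contribution is a multiple of some $\psi_{q,q^\vee}(g)$. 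This is exactly where condition (b) of the algorithm enters.

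Your substitute, an even-rank constraint for the $\epsilon$-adjoint map on $W_{-1}\oplus W_0\oplus W_1$, is local to the middle weights. It cannot control chains that start at the top weight $d$, and there is no $W_0$ at all when $\delta=\half{1}$. The skew-form intuition is right, but it must be applied globally. Up to a nonzero scalar, $\psi^s_{i,i^\vee}(g)=\langle\Delta_i(b_i),g^sf^{l(\Delta_i)-1}\Delta_i(b_i)\rangle$. Since $f$ and $g$ commute and are both anti-self-adjoint, $(v,w)\mapsto\langle v,g^sf^{l(\Delta_i)-1}w\rangle$ is skew-symmetric in bad parity, so this vanishes. In addition, the conjugating element must be an isometry. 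This requires $\mathrm{span}\{g^i\Delta_0(d)\}$ to be totally isotropic, which again follows from Lemma \ref{lem i i vee}.

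\textbf{Second missing ingredient: an honest splitting.} Suppose $g(U_1)\subseteq U_1$. Then $g$ preserves the flag $U_1\subset U_1\oplus W^-\subset W$, and its $W^-$-block lies in $C(f_{W^-})\cap V^\ast_{\lambda^{\#}}$ (Lemma \ref{lem g-}). But additivity of rank matrices now gives the wrong inequality. Conjugate by the cocharacter of $H_\lambda$ acting by $t,1,t^{-1}$ on $U_1,W^-,U_2$ and let $t\to 0$. This shows the block-diagonal part of $g$ lies in the orbit closure of $g$. So a filtration only yields a \emph{lower} bound on $r(g)$ in terms of its graded pieces.

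To get $r(\widehat{\phi})\leq r(\phi_{\{[d-r,d],[-d,-d+r]\}}\oplus\AZ(\phi^{\#}))$, you need a further conjugation by some $h\in H_\lambda$, not commuting with $f$. This $h$ must make $g$ block diagonal for $W=U\oplus W^-$ while leaving $g_{W^-}$ unchanged. That is Lemma \ref{lem second conjugation}. It is built from M{\oe}glin--Waldspurger's unipotent correction $\Delta_i(a)\mapsto\Delta_i(a)+\lambda(i,a)\Delta_{d-a}(a)$ for $a>d-r$, extended to an isometry. Only then do the induction hypothesis on $W^-$ and Example \ref{ex bad open} on $U$ add up.

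As written, your proposal is an accurate outline of the paper's strategy but not a proof of the theorem.
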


Our proof of the theorem proceeds by suitably adapting half of the argument of \cite{MW86} from the general linear group setting to the context of classical groups. Thanks to Lemma \ref{lem involution poset}, we can avoid the other half of their argument.

\subsection{\texorpdfstring{Algorithm for $\AZ(\phi)$}{}}
In this subsection, we review Lanard-M{\'i}nguez's formula for computing $\widehat{\pi}$ in the case of purely bad parity representations of $\Sp_{2n}(F)$ and $\SO_{2n+1}(F)$. We also introduce the definition of $\AZ(\phi)$. To begin, we recall some basic notation regarding segments.

\begin{defn}\label{def segments}\ 
    \begin{enumerate}
        \item A \emph{segment} is a set of supercuspidal representations of a general linear group
        \[\Delta=\{\rho\lvert\cdot\rvert^{b},\rho\lvert\cdot\rvert^{b+1},\ldots, \rho\lvert\cdot\rvert^{e}\},\]
        where $b,e\in\mathbb{R},$ $e-b \in \Z_{\geq 0}$, and $\rho$ is an irreducible unitary supercuspidal representation of some $\GL_d(F)$. We shall write $\Delta=[b,e]_{\rho}$ and write $b=b(\Delta), e=e(\Delta)$ and $l(\Delta):=e-b+1$.
        \item For a segment $\Delta=[b,e]_{\rho}$, we define its contragredient as $\Delta^{\vee}:= [-e,-b]_{\rho^{\vee}}$.
        \item For a segment $\Delta=[b,e]_{\rho}$, define $\Delta^-:=[b,e-1]_{\rho} $ and ${}^-\Delta:=[b+1,e]_{\rho} $
        \item We say $\Delta'=[b',e']_{\rho'}$ \emph{precedes} $\Delta=[b,e]_{\rho}$ if $\rho'=\rho, \ b'-b \in \Z$ and
        \[ b'<b,\ e'< e,\ b\leq e'+1.\]
        We write $\Delta' \prec \Delta$ in this case.
        \item A multi-segment is a multi-set of segments $\mfr{m}=\{[b_i,e_i]_{\rho_i}\}_{i \in I}$. For each irreducible unitary supercuspidal representation $\rho$, we consider the ``$\rho$-part" of $\mfr{m}$ by setting $I_{\rho}=\{i \in I \ | \ \rho_i \cong \rho\}$ and defining
        \[ \mfr{m}_{\rho}:=\{[b_i,e_i]_{\rho_i}\}_{i \in I_{\rho}}. \]
         \item  We shall identify $L$-parameters of general linear groups and multi-segments via the following bijection.
        \[ \phi=\bigoplus_{i\in I} \rho_i\lvert\cdot \rvert^{x_i} \otimes S_{a_i} \longmapsto \mfr{m}_{\phi}= \{ [(1-a_i)/2+x_i,(a_i-1)/2+x_i ]_{\rho_i} \}_{i \in I}.\]
    \end{enumerate}
\end{defn}
If $\mfr{m}_1, \mfr{m}_2 $ are two multi-segments, we let $\mfr{m}_1 + \mfr{m}_2$ or $\mfr{m}_1 \sqcup \mfr{m}_2$ be the disjoint union of multi-sets of segments. We use $\Sigma$ for disjoint union of multi-sets over certain index set. For instance, we have
$\mfr{m}= \sum_{\rho} \mfr{m}_{\rho}= \sqcup_{\rho} \mfr{m}_{\rho}.$
If $\mfr{m}_2$ is a multi-subset of $\mfr{m}_1$, we let $ \mfr{m}_1- \mfr{m}_2$ denote the multi-segment characterized by $(\mfr{m}_1- \mfr{m}_2)+ \mfr{m}_2= \mfr{m}_1$.

We consider multi-segments that come from isotypic $L$-parameters of bad parity.

\begin{defn}\label{def Mseg bad}
   For selfdual $\rho$, we let $\MSeg_{\rho,bad}$ be the set of multi-segments $\mfr{m}=\{\Delta_i=[b_i,e_i]_{\rho}\}_{i \in I}$ satisfying the following conditions. 
   \begin{enumerate}
       \item [(i)] There exists a $\delta \in \{0, \half{1}\}$ such that $b_i \in \delta+\Z$ for all $i \in I$.
       \item [(ii)] There is an involution $i \mapsto i^{\vee}$ on $I$ such that for any $i \in I,$
    \begin{itemize}
        \item $\Delta_{i^{\vee}}= (\Delta_{i})^{\vee}$, and
        \item $i \neq i^{\vee}$.
    \end{itemize}
   \end{enumerate}
    We shall write $\mfr{m} \in \MSeg_{\rho,bad}$ or $(\mfr{m}, \vee) \in \MSeg_{\rho,bad}$.   Finally, we let 
    \[\MSeg_{bad}:= \{ \mfr{m}\ | \ \mfr{m}_{\rho}\in \MSeg_{\rho,bad} \text{ for all selfdual }\rho \}.\]
\end{defn}

Now we state the algorithm of Lanard-M{\'i}nguez on multi-segments.

\begin{algo}[{\cite[\S 5.2]{LM25}}]\label{algo LM25}
    Let $\mfr{m}=\{\Delta_i\}_{i \in I} \in \MSeg_{\rho,bad}$. We define $\AZ_{bad}(\mfr{m})$ inductively as follows.

    \begin{enumerate}
        \item [Step 1:] If $\mfr{m}= \emptyset$, then $\AZ_{bad}(\mfr{m}):= \emptyset$.
        \item [Step 2:] Let $d:= \max_{i \in I}\{e(\Delta_i)\}$ and choose an $ i_0 \in I$ such that
        \[ e(\Delta_{i_0}) = d,\ b(\Delta_{i_0})= \max\{ b(\Delta_i)\ | \ e(\Delta_i)=d \}.\]
        \item [Step 3:] For $k \geq 1$, choose an index $i_k$ as follows. Let $I_{k}$ be the subset of index $i \in I$ satisfying the following conditions.
        \begin{enumerate}
            \item [(a)] $\Delta_{i} \prec \Delta_{i_{k-1}}$ and $e(\Delta_{i})= d-k$.
            \item [(b)] If $\Delta_{i}^{\vee}= \Delta_{i_j}$ for some $0 \leq j \leq k-1$, then there exists an $i' \in I \setminus \{i\}$ such that $\Delta_{i'}=\Delta_i$.
        \end{enumerate}
        If the set $I_{k}$ is non-empty, then take any $i_k \in I_{k}$ such that
        \[ b(\Delta_{i_k})= \max_{i \in I_{k}} \{b(\Delta_i)\}.\]
        If $I_{k}$ is empty, proceed to Step 4.
        \item [Step 4:] Let $\{i_0,\ldots,i_r\}$ be the sequence of indices chosen in Step 2,3.
        Define the multi-segment $\mfr{m}^{\#}$ by
        \[ \mfr{m}^{\#}= \mfr{m}- \sum_{j=0}^r  \{\Delta_{i_j}, \Delta_{i_j}^{\vee}\} 
        + \sum_{j=0}^r \{ \Delta_{i_j}^- ,{}^-(\Delta_{i_j}^{\vee}) \} . \]
        Then define 
        \[ \AZ_{bad}(\mfr{m}):= \{[d-r,d]_{\rho},[-d,-d+r]_{\rho}\}+ \AZ_{bad}(\mfr{m}^{\#}). \]
    \end{enumerate} 

    Finally, for general $\mfr{m}\in \MSeg_{bad}$, we define $\AZ_{bad}(\mfr{m}):= \sum_{\rho} \AZ_{bad}(\mfr{m}_{\rho})$.
\end{algo}

We record the translation of the algorithm from multi-segments to $L$-parameters.

\begin{defn}\label{def AZ phi bad}\ 
\begin{enumerate}
    \item Suppose $\phi$ is an $L$-parameter of $G_n=\SO_{2n+1}(F)$ or $\RO_{2n}(F)$ such that $\phi_{[\rho]}$ (see Definition \ref{def rho part}) is of bad parity for any $[\rho]$. Then we define $\AZ(\phi)$ to be the unique $L$-parameter of $G_n$ such that
    \[ \mfr{m}_{\std \circ \AZ(\phi)}= \AZ_{bad}(\mfr{m}_{\std \circ \phi} ).\]
    \item Suppose $\phi$ is an $L$-parameter of $\Sp_{2n}(F)$ such that $\phi_{[\rho]}$ is of bad parity for $[\rho]\neq [1]$ and $\phi_{[1]}=1 \otimes S_1$ is the trivial $L$-parameter of $\Sp_{0}(F)$. Write $\mfr{m}_{\std \circ \phi}= \mfr{m}' \sqcup \{[1,1]_{1}\}$ where $\mfr{m}' \in \MSeg_{bad}$. Then we define $\AZ_{bad}(\phi)$ to be the unique $L$-parameter of $G_n$ such that
    \[ \mfr{m}_{\std \circ \AZ(\phi)}= \AZ_{bad}(\mfr{m}')\sqcup \{[1,1]_{1}\}.\]
\end{enumerate}
\end{defn}

\begin{thm}[{\cite[Theorem 5.4.1]{LM25}}]\label{thm LM25}
    Suppose $\phi$ is an $L$-parameter of $G=\Sp_{2n}(F)$ or $\SO_{2n+1}(F)$ satisfying the condition in Definition \ref{def AZ phi bad}. Let $\pi$ be the unique member of the $L$-packet $\Pi_{\phi}(G)$. Then
    \[ \Pi_{\AZ(\phi)}(G)= \{ \widehat{\pi}\},\]
    where $\widehat{\pi}$ is the Aubert-Zelevinsky involution of $\pi$.
\end{thm}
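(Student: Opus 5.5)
The natural route is induction on $n$, following the structure of Algorithm \ref{algo LM25}. Each pass of Steps 2--4 strips off the pair of segments $[d-r,d]_\rho$, $[-d,-d+r]_\rho$ from $\AZ_{bad}(\mfr{m})$. So I would try to show that $\widehat{\pi}$ is obtained from a smaller representation $\widehat{\pi^{\#}}$ by a ``reverse derivative'' that adds exactly this pair. Here $\pi^{\#}$ denotes the member of the singleton $L$-packet of the $L$-parameter with multi-segment $\mfr{m}^{\#}$.

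\textbf{Step 1: reduce to the $\rho$-isotypic case.} Since $\phi$ is of bad parity, Remark \ref{rmk gp}(2) says that $\Pi_\phi(G)=\{\pi\}$ is a singleton. I would use the (known) irreducibility of parabolic induction from purely bad-parity data, in the style of Jantzen's reducibility results. This gives
\[
\pi\cong \tau_\phi\rtimes\pi_0,
\]
where $\pi_0$ is the trivial representation of $\Sp_0$ or $\SO_1$, and $\tau_\phi$ is an irreducible representation of a general linear group whose $L$-parameter is ``half'' of $\std\circ\phi$. Aubert's theorem gives $\AZ(\mathrm{Ind}\,\sigma)=\mathrm{Ind}(\AZ(\sigma))$ in the Grothendieck group. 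This lets the different $\rho$-parts be handled independently, since induction between disjoint cuspidal lines commutes and stays irreducible. It therefore suffices to treat $\mfr{m}\in\MSeg_{\rho,bad}$ for a single selfdual $\rho$. The naive guess $\widehat{\pi}=\widehat{\tau_\phi}\rtimes\pi_0$ is \emph{not} what the algorithm produces, because the choice of half of $\mfr{m}$ is not canonical. So a finer argument is needed.

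\textbf{Step 2: the inductive step via derivatives.} Let $d$ be the maximal end, as in Step 2 of the algorithm. I would compute the highest $\rho\lvert\cdot\rvert^{d}$-derivative of $\pi$ and the highest $\rho\lvert\cdot\rvert^{-d}$-type (dual) derivative of $\widehat{\pi}$. The tool is the compatibility of the Aubert--Zelevinsky involution with Jacquet functors: $\widehat{\pi}$ has $\rho\lvert\cdot\rvert^{x}$ in its Jacquet module exactly when $\pi$ has $\rho\lvert\cdot\rvert^{-x}$ in the appropriate position. I would then mimic the Mœglin--Waldspurger argument \cite[\S II]{MW86}, which identifies the chain $\Delta_{i_0}\succ\Delta_{i_1}\succ\cdots\succ\Delta_{i_r}$ as the set of segments contributing to the highest derivative of $\rho\lvert\cdot\rvert^{d},\rho\lvert\cdot\rvert^{d-1},\dots$ in the Zelevinsky-dual picture. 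In the classical-group setting, each segment $\Delta$ comes together with $\Delta^{\vee}$. Removing the last point of $\Delta_{i_j}$ must therefore be accompanied by removing the first point of $\Delta_{i_j}^{\vee}$, which explains the form of $\mfr{m}^{\#}$.

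Condition (b) of Step 3 is where the classical-group structure enters. A segment whose dual has already been used in the chain cannot be used again unless it occurs with multiplicity at least two. I would verify this via the Tadić structure formula for $\mu^*(\tau\rtimes\pi_0)$: there, both $\tau$ and $\tau^\vee$-type pieces contribute, and a self-paired segment used twice would produce a term of wrong multiplicity.

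\textbf{Step 3: conclude by induction.} The computation in Step 2 would show that $\widehat{\pi}$ is the unique irreducible subrepresentation of
\[
L\bigl([d-r,d]_\rho\bigr)\rtimes \widehat{\pi^{\#}},
\]
or of its Langlands-quotient analogue. By induction, $\widehat{\pi^{\#}}$ has multi-segment $\AZ_{bad}(\mfr{m}^{\#})$. Irreducibility in the bad parity case then identifies the $L$-parameter of $\widehat{\pi}$ as $\AZ_{bad}(\mfr{m})$.

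\textbf{Main obstacle.} The hardest step is the derivative/Jacquet-module computation in Step 2. Specifically, one must show that the chain chosen by the greedy rule (maximal $b$, subject to (a) and (b)) is exactly what survives in the highest derivative, with no extra cancellation coming from the $\Delta\leftrightarrow\Delta^{\vee}$ symmetry in Tadić's formula. I would first try to settle the case where all segments are pairwise non-self-linked, and then handle multiplicities using condition (b).
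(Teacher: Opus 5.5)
The paper gives no proof of this statement. It is quoted from \cite[Theorem 5.4.1]{LM25} and used as an input (e.g.\ for $\AZ\circ\AZ=\id$ in the proof of Theorem~\ref{thm bad}), so your argument has to stand on its own.

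\textbf{Step~1 is false as stated.} It is not true that $\tau\rtimes\pi_0$ is irreducible for an arbitrary ``half'' $\mfr{m}_1$ of $\mfr{m}$. Take $G=\SO_9(F)$, $\rho$ trivial, and the bad-parity multi-segment $\mfr{m}=\{[0,1],[0,1],[-1,0],[-1,0]\}$. For $\mfr{m}_1=\{[0,1],[0,1]\}$ you get $\tau=\delta([0,1])\times\delta([0,1])$, and $\tau\rtimes\pi_0$ is the standard module of $\phi$. Since $\delta([0,1])\rtimes\pi_0$ and $\delta([-1,0])\rtimes\pi_0$ have the same semisimplification, $\tau\rtimes\pi_0$ also has the tempered subquotient $\delta([-1,1])\times\delta([0,0])\rtimes\pi_0$. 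So it is reducible; the half that works here is $\{[0,1],[-1,0]\}$.

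\textbf{Your remark on the ``naive guess'' contradicts your own tools.} Whenever $\pi\cong\tau\rtimes\pi_0$ irreducibly, Aubert's commutation of the duality with parabolic induction \cite{Aub95} gives $\widehat{\pi}=\pm\,\widehat{\tau}\rtimes\pi_0$ in the Grothendieck group, hence $\widehat{\pi}\cong\widehat{\tau}\rtimes\pi_0$. The mismatch you sensed is exactly the failure of irreducibility for a wrong half. In the example, Algorithm~\ref{algo LM25} gives $\AZ_{bad}(\mfr{m})=\mfr{m}$:
\begin{itemize}
\item first pass: chain $[0,1]\succ[-1,0]$, allowed by (b) because $[-1,0]$ has multiplicity two;
\item second pass, on $\{[1,1],[0,0],[0,0],[-1,-1]\}$: chain $[1,1]\succ[0,0]$, with $[-1,-1]$ excluded by (b).
\end{itemize}
This output is $\mfr{m}'_1\sqcup(\mfr{m}'_1)^{\vee}$ for $\mfr{m}'_1=\{[0,1],[-1,0]\}$, which is its own M{\oe}glin--Waldspurger dual. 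It is not the double of $\{[1,1],[1,1],[0,0],[0,0]\}$, the M{\oe}glin--Waldspurger dual of $\{[0,1],[0,1]\}$.

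A correct Step~1 (existence of a good half) would reduce the theorem to $\GL$, a genuinely different and shorter route than Lanard--M{\'i}nguez's. You would still need two things, neither of which is in the proposal:
\begin{itemize}
\item which half gives an irreducible induction;
\item a proof that $\AZ_{bad}(\mfr{m})$ equals the M{\oe}glin--Waldspurger dual of that half plus its contragredient.
\end{itemize}
Condition (b) is exactly what such an identity must explain.

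\textbf{Steps~2--3 contain no argument.} You correctly say they carry all the content, but:
\begin{itemize}
\item you do not show that the greedy chain subject to (a) and (b) is what the highest $\rho\lvert\cdot\rvert^{d}$-derivative detects;
\item the appeal to multiplicities in Tadi\'c's formula is a heuristic, not a derivation of (b);
\item the claim that $\widehat{\pi}$ is the unique irreducible subrepresentation of $L([d-r,d]_\rho)\rtimes\widehat{\pi^{\#}}$ is simply asserted.
\end{itemize}
Even granting that claim, a socle statement does not by itself yield the Langlands data $\{[d-r,d]_\rho,[-d,-d+r]_\rho\}+\AZ_{bad}(\mfr{m}^{\#})$. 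You would need to check that the added segment sits correctly relative to the exponents of $\widehat{\pi^{\#}}$, so that the embedding lands in a (co)standard module. ``Irreducibility in the bad parity case'' cannot replace this since, as shown above, it fails in general.

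As written, the proposal is an outline for reproving \cite{LM25} rather than a proof.
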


\begin{cor}
    For any isotypic $L$-parameter of $G_n=\Sp_{2n}(F)$ or $\SO_{2n+1}(F)$ or $\RO_{2n}(F)$ of bad parity, we have $\AZ(\AZ(\phi))=\phi$. 
\end{cor}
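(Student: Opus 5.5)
For $G_n=\Sp_{2n}(F)$ or $\SO_{2n+1}(F)$ I will deduce the statement directly from Theorem \ref{thm LM25} and the fact that the Aubert-Zelevinsky involution is an involution. Let $\phi$ be isotypic of bad parity and let $\pi$ be the unique member of $\Pi_\phi(G_n)$; this $L$-packet is a singleton by Remark \ref{rmk gp}(2). Theorem \ref{thm LM25} gives $\Pi_{\AZ(\phi)}(G_n)=\{\widehat{\pi}\}$.

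The first step is to check that $\AZ(\phi)$ again satisfies the hypothesis of Definition \ref{def AZ phi bad}. Algorithm \ref{algo LM25} only produces segments $[b,e]_\rho$ with $b$ in the same coset $\delta+\Z$ as before. It also produces them in contragredient pairs, namely $\{[d-r,d],[-d,-d+r]\}$ and $\{\Delta^-,{}^-(\Delta^\vee)\}$. So $\AZ_{bad}(\mfr{m})\in\MSeg_{\rho,bad}$, and the infinitesimal parameter is unchanged. Hence $\AZ(\phi)$ is isotypic of bad parity with the same $[\rho]$. In the $\Sp_{2n}$ case the extra trivial summand $[1,1]_1$ is carried along by construction.

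Applying Theorem \ref{thm LM25} a second time gives $\Pi_{\AZ(\AZ(\phi))}(G_n)=\{\widehat{\widehat{\pi}}\}=\{\pi\}=\Pi_\phi(G_n)$. By injectivity of the local Langlands correspondence (disjointness of $L$-packets), $\AZ(\AZ(\phi))=\phi$.

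For $\RO_{2n}(F)$, Theorem \ref{thm LM25} is not stated, so I will reduce to the other groups through Definition \ref{defn unramification} and Proposition \ref{prop unramification}. The algorithm is purely combinatorial on the exponents $x_i$ and the $\SL_2$-dimensions $a_i$, and it commutes with replacing $\rho^u$ by the trivial character. Hence $\AZ(\phi)^{\ur}=\AZ(\phi^{\ur})$, where $\phi^{\ur}$ is a bad parity parameter of a split group of type fixed by $\epsilon\epsilon_{\rho^u}$. Changing $\rho^u$ to another selfdual representation of the appropriate sign $\epsilon_{\rho^u}$ (which we can arrange) moves us between the orthogonal and symplectic cases. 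In particular we can realize the same multi-segment as a bad parity parameter of some $\Sp_{2m}(F)$ or $\SO_{2m+1}(F)$, since only the sign $\epsilon\epsilon_{[\rho]}=-1$ matters. For instance, choose a selfdual $\rho'$ with $\epsilon_{\rho'}=-\epsilon_{\rho^u}$ and regard $\rho'\otimes\phi^{\ur}_{\GL}$ as a parameter of the group of the opposite type. Then the purely combinatorial identity $\AZ_{bad}(\AZ_{bad}(\mfr{m}))=\mfr{m}$, already proven for multi-segments coming from $\Sp$/$\SO_{\rm odd}$, transfers to $\RO_{2n}$.

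The main obstacle is this transfer for $\RO_{2n}$. It requires that every multi-segment in $\MSeg_{\rho,bad}$ actually arises from a bad parity parameter of $\Sp$ or $\SO_{\rm odd}$, which needs existence of a selfdual irreducible $\rho'$ of the needed sign. The cleanest choice is a suitable quadratic character times something symplectic. Alternatively, one can note that the even dimension $2\sum a_i$ is automatic in the bad parity case. I will try the latter first, observing that bad parity multi-segments have all segments paired and hence are realizable with $\rho$ trivial for $\SO_{2m+1}$ once a half-integral shift or a symplectic $\rho'$ is chosen.
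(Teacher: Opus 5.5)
Your treatment of $\Sp_{2n}(F)$ and $\SO_{2n+1}(F)$ is the paper's argument. You apply Theorem \ref{thm LM25} twice, using that $\AZ_{bad}$ preserves $\MSeg_{\rho,bad}$ and the infinitesimal parameter, and that $L$-packets are disjoint.

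\textbf{The gap is in the $\RO_{2n}(F)$ case.} You end with an unresolved obstacle, and the route you say you will try first fails. With $\rho$ trivial and $\delta=\frac12$ one has $\epsilon_{[\rho]}=-1$. For $\SO_{2m+1}$, where $\epsilon=-1$, this is good parity, not bad. Shifting all exponents by $\frac12$ also fails. It destroys the contragredient pairing: $\{[\frac12,\frac12],[-\frac12,-\frac12]\}$ becomes $\{[1,1],[0,0]\}$. So the shifted multi-segment is not in $\MSeg_{bad}$, and the shift does not commute with $\AZ_{bad}$.

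Your other option, replacing $\rho^u$ by a selfdual $\rho'$ of the opposite sign, can be completed:
\begin{itemize}
\item an irreducible $2$-dimensional representation of $W_F$ with trivial determinant is symplectic, and such representations exist;
\item every paired multi-segment is realized, since each $\tau\oplus\tau^\vee$ carries both a symmetric and an alternating form;
\item $\AZ_{bad}$ only sees the exponents.
\end{itemize}
Citing Proposition \ref{prop unramification} here is beside the point. That proposition concerns $\widehat{\phi}$, whereas compatibility of $\AZ$ with changing $\rho$ is immediate from the combinatorial definition.

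The idea you are missing makes all of this unnecessary. $\RO_{2n}(\BC)$ and the dual group $\SO_{2n+1}(\BC)$ of $\Sp_{2n}(F)$ are both orthogonal ($\epsilon=1$), so bad parity is the same condition for both. The paper sets $\std\circ\phi'=\std\circ\phi\oplus 1\otimes S_1$. This is a parameter of $\Sp_{2n}(F)$ whose $[1]$-part is the trivial parameter of $\Sp_0(F)$ and whose remaining part is $\phi$. That is exactly the situation of Definition \ref{def AZ phi bad}(2). By that definition, $\std\circ\AZ(\phi')=\std\circ\AZ(\phi)\oplus 1\otimes S_1$. The identity $\AZ(\AZ(\phi'))=\phi'$ from Theorem \ref{thm LM25} then gives $\AZ(\AZ(\phi))=\phi$ after cancelling the trivial summand and using injectivity of $\std_*$. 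No change of $\rho$ and no existence question is needed.
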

\begin{proof}
    If $G_n=\Sp_{2n}(F)$ or $\SO_{2n+1}(F)$, this is a direct consequence of Theorem \ref{thm LM25}. If $G_{n}=\RO_{2n}(F)$, then we let $\phi'$ be the $L$-parameter of $\Sp_{2n}(F)$ such that
    \[ \std \circ \phi' = \std \circ \phi \oplus 1 \otimes S_1.
    \]
   Since $\AZ(\AZ(\phi'))=\phi'$ by Theorem \ref{thm LM25}, we see that $\AZ(\AZ(\phi))=\phi$ by Definition \ref{def AZ phi bad}.
\end{proof}

\begin{remark}
    It is a natural expectation that Theorem \ref{thm LM25} also holds for split $\RO_{2n}(F)$ as well. We shall see in \S \ref{sec ABV} that this is a consequence of an expected property of ABV-packets (Conjecture \ref{conj AZ}).
\end{remark}

We state a special case of Theorem \ref{thm bad}.
\begin{exmp}\label{ex bad open}
    Let $d$ be a half integer and $r \in \Z_{\geq 0}$. Consider the infinitesimal parameter
    \[\lambda:= \bigoplus_{i=0}^r (\lvert\cdot\rvert^{d-r} \oplus \lvert \cdot \rvert^{r-d})\]
    of $\RO_{2n}$ (if $d\in \half{1}+\Z$) or $\SO_{2n+1}$ (if $d \in \Z$) of bad parity. Consider
    \[ \mfr{m}_0= \mfr{m}_{\lambda_{\GL}}= \bigcup_{i=0}^r\{[d-i, d-i],[i-d,i-d]\},\]
    which corresponds to 
    the unique closed orbit in the Vogan variety $V_{\lambda}$. Then the algorithm gives
    \[ \AZ_{bad}(\mfr{m}_0)= \{ [d-r,d], [-d,-d+r] \},\]
    which corresponds to $\AZ(\lambda)\in \Phi_{\lambda}(G_n)$.
    Since $\{\phi' \in \Phi_{\lambda_{\GL}}(\GL_{2r+2}(F)) \ | \ \phi' \gneq_C \std \circ \AZ(\lambda)\}$ is either empty or a singleton consisting of a tempered $L$-parameter that does not factor through $G_n^{\vee}$, we see that $\AZ(\lambda)$ corresponds to the unique open orbit. Hence, $\AZ(\lambda)$ matches the Pyasetskii involution of $\lambda$ in this case. 
\end{exmp}

\subsection{\texorpdfstring{Notations for proving Theorem \ref{thm bad}}{}}\label{sec setup}
In this subsection, we set up the notation used in \cite{MW86} with some modifications for classical groups. By Proposition \ref{prop unramification}, we may assume that $\phi$ is unramified, i.e., we may assume $\rho$ is trivial.
We shall also simply identify the segment $\{\lvert\cdot\rvert^{b},\ldots,\lvert\cdot\rvert^{e}\}$ with the set of half integers $\{b,\ldots,e\}$.

Thus, we let $\phi= \bigoplus_{i \in I} \lvert\cdot \rvert^{x_i} \otimes S_{a_i}$
be an $L$-parameter of $\RO_{2n}$  or $\SO_{2n+1}$ of bad parity, and write the corresponding multi-segment as
\[ \mfr{m}= \{\Delta_i=[b_i, e_i]\}_{i \in I}.\]
In the computation, we set $I(a):= \{i \in I\ | \ a \in \Delta_i\}.$ We consider the (even-dimensional) vector space 
\[W_{\mfr{m}} := \bigoplus_{i \in I} \bigoplus_{a \in \Delta_i } \BC \Delta_{i}(a)\]
with a fixed basis $\{\Delta_{i}(a)\}_{i \in I, a \in \Delta_i}$. We simply write $W_{\mfr{m}}=W$ if $\mfr{m}$ is clear from the context. We define a standard inner product $(\cdot, \cdot)$ according to this basis:
\[ (\Delta_i (a), \Delta_j(b))= \begin{cases}
    1 & \text{ if }i=j \text{ and }a=b,\\
    0& \text{ otherwise.}
\end{cases}\]
Given any $g\in\End(W)$ , we define its transpose via
$(\cdot, \cdot)$ by $(v,g(w))= ({}^tg(v),w)$ for any $v,w \in W$.

We say $g \in \End(W)$ is of degree $k$ if 
\begin{align}\label{eq degree k}
     g(\Delta_i(a))= \sum_{j \in I,\ a+k\in \Delta_j} \gamma_{i,j}(a) \Delta_j(a+k)
\end{align}
for some collection of complex numbers $\{\gamma_{i,j}(a)\}$. Let $\End(W)^{\deg=k}$ denote the collection of endomorphisms of $W$ of degree $k$.
If $g$ is of degree $k$ as in \eqref{eq degree k}, then we have
\[ {}^t g(\Delta_i(a))= \sum_{j \in I,\ a-k\in \Delta_j} \gamma_{j,i}(a-k) \Delta_j(a-k), \]
which is of degree $-k$.

We fix the involution $i \mapsto i^{\vee}$ on $I$ as in Definition \ref{def Mseg bad} throughout the rest of the section. Decompose $I= I_{>0}  \sqcup I_{=0} \sqcup I_{<0}$, where, for $\oldbullet\in\{>,=,<\}$,
\[I_{\oldbullet 0}:=\{i \in I \ | \ b_i+e_i \oldbullet 0\}.\]
We further fix a decomposition of $I_{=0}= I_{=0}^+ \sqcup I_{=0}^-$ with $(I_{=0}^+)^{\vee}= I_{=0}^-$. Then we define $I^+:= I_{>0} \sqcup I_{=0}^+$ and $I^-:= I_{<0} \sqcup I_{=0}^-$ so that $I= I^+ \sqcup I^-$ and $(I^+)^{\vee}= I^-$.

Let $\delta\in \{0, \half{1}\}$ such that $b(\Delta_i)-\delta\in \Z$ for any $i \in I$. Set $\epsilon=(-1)^{2\delta+1}$. Namely,
$\epsilon=1$ if $\delta= \half{1}$ and $\epsilon=-1$ if $\delta=0$. We define a non-degenerate $\epsilon$-symmetric bilinear form $\langle \cdot, \cdot \rangle_{\mfr{m}}$, or simply $\langle \cdot, \cdot \rangle$ if $\mfr{m}$ is clear from context, by
\[ \langle \Delta_i(a), \Delta_{j}(b)\rangle =\begin{cases}
    \epsilon(i) (-1)^{a-\delta} & \text{ if }j=i^{\vee} \text{ and }b=-a,\\
    0 & \text{ otherwise,}
\end{cases}\]
where 
\[ \epsilon(i)=\begin{cases}
    1 &\text{ if }i \in I^+,\\
    -1 &\text{ if }i \in I^-.
\end{cases}\]
With the inner product $(\cdot, \cdot)$, the bilinear form $\langle \cdot,\cdot \rangle$ is represented by $J\in \End(W)$ via
\[ \langle v,w\rangle= (v,J(w)). \]
More explicitly, we have
\begin{align}\label{eq def J}
J(\Delta_i(a))=\epsilon(i^{\vee}) (-1)^{-a-\delta} \Delta_{i^{\vee}}(-a). 
\end{align}
Note that ${}^tJ= \epsilon J=J^{-1}$.

\begin{remark}
    The choice of $\epsilon\in \{\pm 1\}$ we made is the only choice such that $\phi$ is of bad parity. It is straightforward to check that if we require that $\epsilon(i)=1$ for all $i \in I^+$, then the bilinear form is $\epsilon$-symmetric if and only if (note that $(-1)^{a-\delta}= - (-1)^{-a-\delta}$ if $a \in \half{1}+\Z$)
    \begin{align*}
    \epsilon(i)=\begin{cases}
        1 &\text{ if }i \in I^+,\\
        \epsilon & \text{ if } i \in I^{-} \text{ and }\delta\neq \half{1},\\
        -\epsilon & \text{ if } i \in I^{-} \text{ and }\delta=\half{1},
    \end{cases}
\end{align*}
    which matches our definition of $\epsilon(i)$ above.
\end{remark}

Now we rephrase the definition of $V_{\lambda}, V_{\lambda}^{\ast} $ and $H_{\lambda}$.
\begin{align*}
    V_{\lambda}&=\{g \in \End(W)^{\deg=1}\ | \ {}^tg J +Jg=0\},\\
    V_{\lambda}^{\ast}&=\{g \in \End(W)^{\deg=-1}\ | \ {}^tg J +Jg=0\},\\
      H_{\lambda}&=\{g \in \End(W)^{\deg=0}\ | \ {}^tg J g = J,\ g \text{ is invertible}\},
\end{align*}

Finally, we fix an $f \in \End(W)$ of degree $1$ defined by
\begin{align}\label{eq def f}
    f(\Delta_{i}(a)):= \begin{cases}
    \Delta_{i}(a+1) & \text{ if }a+1 \in \Delta_i,\\
    0 &\text{ otherwise.}
\end{cases}
\end{align}
Indeed, we have $f$ lies in $C_{\phi} \subseteq V_{\lambda}$, as shown in the following lemma. 
\begin{lemma}\label{lem f in C phi}
    We have ${}^t f J+Jf=0$. Moreover, $f \in C_{\phi}$.
\end{lemma}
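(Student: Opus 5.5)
The first claim, ${}^t f J + Jf = 0$, I will check directly on the basis vectors $\Delta_i(a)$. By bilinearity it suffices to show $\langle f v, w\rangle + \langle v, f w\rangle = 0$ for basis vectors $v=\Delta_i(a)$ and $w=\Delta_j(b)$, since $({}^tfJ + Jf)$ represents exactly this form via $(\cdot,\cdot)$. Both terms vanish unless $j=i^{\vee}$ and $b=-a-1$. In that case the first term is $\langle \Delta_i(a+1),\Delta_{i^\vee}(-a-1)\rangle=\epsilon(i)(-1)^{a+1-\delta}$, provided $a+1\in\Delta_i$. The second term is $\langle \Delta_i(a),\Delta_{i^\vee}(-a)\rangle=\epsilon(i)(-1)^{a-\delta}$, provided $-a\in\Delta_{i^\vee}$.

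The key point is that $\Delta_{i^\vee}=\Delta_i^\vee=[-e_i,-b_i]$. So the condition $a+1\in\Delta_i$ (given $a\in\Delta_i$, $-a-1\in\Delta_{i^\vee}$) is equivalent to $-a\in \Delta_{i^\vee}$: both say $a+1\le e_i$. Hence the two terms are simultaneously nonzero, and when nonzero they differ by the sign $(-1)$, so they cancel. Thus $f\in V_\lambda$. It has degree $1$ by construction.

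For $f\in C_\phi$, I will exhibit an $\mathfrak{sl}_2$-triple inside $\mfr{g}^\vee$ with nilpositive $f$ and semisimple part given by the grading, and identify the resulting $L$-parameter with $\phi$.

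\begin{itemize}
\item \textbf{Decompose $W$.} Write $W=\bigoplus_i W_i$ with $W_i=\mathrm{span}\{\Delta_i(a)\}_{a\in\Delta_i}$. On each $W_i$, $f$ acts as a single Jordan block. The torus/Weil action, in which $\mathrm{Frob}$ acts on $\Delta_i(a)$ by $q^{-a}$ up to convention, realizes $\lambda$, and $W_i$ carries $|\cdot|^{x_i}\otimes S_{a_i}$ with $x_i=(b_i+e_i)/2$.
\item \textbf{Define $h$ and $e'$.} Let $h$ act on $\Delta_i(a)$ by $2(a-x_i)$. Define the lowering operator $e'$ on each block by the standard $\mathfrak{sl}_2$ formulas, so that $(f,h,e')$ is an $\mathfrak{sl}_2$-triple acting on $W_i$ as $S_{a_i}$.
\item \textbf{Check $h$ and $e'$ lie in $\mfr{g}^\vee$.} Since $x_{i^\vee}=-x_i$, the operator $h$ sends the pair $(\Delta_i(a),\Delta_{i^\vee}(-a))$ to eigenvalues $2(a-x_i)$ and $-2(a-x_i)$, so $h$ is skew for $\langle\cdot,\cdot\rangle$. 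The standard coefficients for $e'$ depend only on position in the block, and that position is reversed under $\vee$. So the same cancellation computation as for $f$ shows $e'$ is skew.
\item \textbf{Identify the parameter.} This yields a homomorphism $\phi': W_F\times\SL_2(\BC)\to G^\vee$ with $\std\circ\phi'\cong\bigoplus_i|\cdot|^{x_i}\otimes S_{a_i}=\std\circ\phi$, with infinitesimal parameter $\lambda$, and with $X_{\phi'}=f$. By injectivity of $\std_*$ (Remark \ref{rmk weakly equivalent}(1)), $\phi'=\phi$. Hence $f=X_{\phi'}\in C_\phi$.
\end{itemize}

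The main obstacle I expect is the sign bookkeeping: confirming that the form is genuinely $\epsilon$-symmetric and nondegenerate of the right type, so that it realizes $G_n^\vee$ with $\phi$ of bad parity, and that $e'$ is skew. I would first do the skewness check for $e'$ carefully in the pairing of $W_i$ with $W_{i^\vee}$, then rely on the preceding Remark for the type of the form.
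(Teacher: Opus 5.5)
Your proof is correct, and the second half takes a genuinely different route from the paper.

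\textbf{First claim.} Your verification of ${}^tfJ+Jf=0$ on pairs of basis vectors is the same computation as the paper's, organized through $\langle\cdot,\cdot\rangle$ rather than by evaluating ${}^tfJ$ on $\Delta_i(a)$. One small slip: under your hypotheses $a\in\Delta_i$ and $-a-1\in\Delta_{i^\vee}$, both side conditions hold automatically. The condition $a+1\in\Delta_i$ is equivalent to $-a-1\in\Delta_{i^\vee}$, and $-a\in\Delta_{i^\vee}$ is equivalent to $a\in\Delta_i$. So the two conditions are not both ``$a+1\le e_i$''; the conclusion that both terms are nonzero and cancel is still right.

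\textbf{Second claim.} The paper only observes that the rank matrix of $f$ equals that of $\std\circ\phi$. This implicitly combines three facts:
the $H_\lambda$-orbit of $f$ is $C_{\phi'}$ for some $\phi'\in\Phi_\lambda(G_n)$ (surjectivity of $\Phi_\lambda(G_n)\to V_\lambda/H_\lambda$);
$H_{\lambda_{\GL}}$-orbits are classified by rank matrices;
and $\std_*$ is injective.

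You instead build the parameter by hand. You take an $\mathfrak{sl}_2$-triple $(f,h,e')$ inside $\mfr{g}^\vee$ and let $W_F$ act on $W_i$ by the scalar $|\cdot|^{x_i}$. Then $X_{\phi'}=f$ and $\lambda_{\phi'}=\lambda$ hold exactly. Injectivity of $\std_*$ gives that $\phi'$ and $\phi$ are conjugate by some $g\in G^\vee$. Since both have infinitesimal parameter exactly $\lambda$, this $g$ lies in $H_\lambda$, so $f=\Ad(g)X_\phi\in C_\phi$.

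Your route avoids the surjectivity result and the rank-matrix classification, at the cost of explicit skewness checks. These checks do go through:
\begin{itemize}
\item With $e'(\Delta_i(b_i+k))=k(a_i-k)\,\Delta_i(b_i+k-1)$ and $a_i=l(\Delta_i)$, the vector $\Delta_i(b_i+k-1)$ pairs with position $a_i-k$ in $\Delta_{i^\vee}$.
\item $e'$ acts at that position with coefficient $(a_i-k)k$, which equals $k(a_i-k)$.
\item The alternating sign $(-1)^{a-\delta}$ in the form then yields the cancellation.
\item The $W_F$-part preserves the form because $x_{i^\vee}=-x_i$. This is the same observation you use for $h$; state it explicitly for the group-level action too.
\end{itemize}

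The paper's argument is a one-liner but leans on more imported machinery. Yours is self-contained apart from the injectivity of $\std_*$.
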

\begin{proof} If $b \not\in \Delta_{j}$, then we set $\Delta_{j}(b)=0$. Then
\begin{align*}
    {}^t f J (\Delta_i(a))&= {}^t f( \epsilon(i^{\vee})(-1)^{{-}a-\delta} \Delta_{i^{\vee}}(-a) )\\
    &=\epsilon(i^{\vee})(-1)^{{-}a-\delta} \Delta_{i^{\vee}}(-a-1)\\
    &= -\epsilon(i^{\vee})(-1)^{-a-1-\delta} \Delta_{i^{\vee}}(-a-1)\\
    &=-J( \Delta_i(a+1) ) \\
    &= -J f(\Delta_i(a)).
\end{align*}
Thus, $f \in V_{\lambda}$. Finally, it follows from the definition that the rank matrix of $f$ is equal to the rank matrix of $\phi$. Thus, $f \in C_{\phi}.$
\end{proof}

Finally, we denote the commutant of $f$ by
\[ C(f):= \{ g \in \End(W)\ | \ fg=gf\}.\]

\subsection{\texorpdfstring{Commutant of $f$ in  $V_{\lambda}^{\ast}$ and $H_{\lambda}$}{}}
In this subsection, we generalize \cite[Lemmas II.4, II.5]{MW86} in our setting.

\begin{lemma}\label{lem fg=gf Lie algebra}
    Let $g \in \End(W)^{\deg=-1}$. Write
    \[ g(\Delta_{i}(a))= \sum_{j \in I(a-1)} \alpha_{i,j}(a) \Delta_{j}(a-1),\]
   and set $\alpha_{i,j}(a)=0$ if $a-1 \not\in \Delta_j$ or $a \not\in \Delta_i$.
    \begin{enumerate}
        \item [(a)] We have $g \in C(f)$ if and only if
        \begin{enumerate}
            \item [(i)] $\alpha_{i,j}(a)=0$ if $\Delta_j $ does not precede $ \Delta_i$, and
            \item [(ii)] $\alpha_{i,j}(a)$ is independent of $a$ as long as $a \in \Delta_i$ and $a-1 \in \Delta_j$.
        \end{enumerate}
        Thus, we let $\alpha_{i,j}(g):= \alpha_{i,j}(a)$ for any $a \in \Delta_i$ with $a-1 \in \Delta_j$.
        \item [(b)]  We have ${}^tgJ+Jg=0$ if and only if 
        \begin{itemize}
            \item [(iii)] $\alpha_{i,j}(a)= \epsilon(i^{\vee})\epsilon(j^{\vee}) \alpha_{j^{\vee},i^{\vee}}(-a+1).$
        \end{itemize}
    \end{enumerate}
\end{lemma}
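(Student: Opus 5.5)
Both parts come down to comparing coefficients on the basis vectors $\Delta_i(a)$, using the explicit formulas for $f$, $g$, ${}^tg$ and $J$. The plan is to compute each side of the relevant identity on a basis vector and read off the coefficient conditions.

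\emph{Part (a).} This is the classical computation of \cite[Lemma II.4]{MW86}; the form $\langle\cdot,\cdot\rangle$ plays no role. Evaluate both sides on $\Delta_i(a)$:
\[
fg(\Delta_i(a))=\sum_{j:\,a\in\Delta_j}\alpha_{i,j}(a)\,\Delta_j(a),\qquad
gf(\Delta_i(a))=\sum_{j}\alpha_{i,j}(a+1)\,\Delta_j(a).
\]
In the first sum, $\Delta_j(a)$ appears only when $a-1\in\Delta_j$ and $a\in\Delta_j$. In the second, $\Delta_j(a)$ appears only when $a+1\in\Delta_i$ and $a\in\Delta_j$. Equating coefficients of $\Delta_j(a)$ for every pair $(i,j)$ and every $a$ gives $\alpha_{i,j}(a)=\alpha_{i,j}(a+1)$, with the convention that a coefficient is $0$ whenever the corresponding basis vector does not exist.

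Next, run $a$ along the intersection $\Delta_i\cap(\Delta_j+1)$. This is the key step. If the intersection is empty, condition (i) is automatic. If it is nonempty, propagation in $a$ gives (ii), namely constancy on the whole intersection. It also forces the boundary conditions: at the top end we need $e_i\le e_j+1$ to avoid a term $\alpha(e_i+1)=0$ being forced nonzero, and at the bottom end we need $b_j\le b_i-1$. Together these say exactly that $\Delta_j\prec\Delta_i$ is the condition for a nonzero constant to be allowed; otherwise the endpoint comparison forces the constant to be $0$.

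The main care needed is the case analysis at the endpoints, i.e.\ checking that the "precedes" inequalities are exactly what survive. I would organize it as: (1) $b_i=b_j+1$ versus $b_i>b_j+1$, and (2) $e_i=e_j+1$ versus $e_i<e_j+1$. The converse direction, that (i) and (ii) imply $fg=gf$, is then immediate from the same coefficient comparison.

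\emph{Part (b).} Compute ${}^tgJ(\Delta_i(a))$ and $-Jg(\Delta_i(a))$ using \eqref{eq def J} and the transpose formula for degree $-1$ maps, ${}^tg(\Delta_k(c))=\sum_{l}\alpha_{l,k}(c+1)\Delta_l(c+1)$.

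The left side is
\[
{}^tgJ(\Delta_i(a))=\epsilon(i^\vee)(-1)^{-a-\delta}\sum_{l}\alpha_{l,i^\vee}(-a+1)\,\Delta_l(-a+1).
\]
The right side is
\[
-Jg(\Delta_i(a))=-\sum_j\alpha_{i,j}(a)\,\epsilon(j^\vee)(-1)^{-a+1-\delta}\,\Delta_{j^\vee}(-a+1).
\]
Match the coefficient of $\Delta_{j^\vee}(-a+1)$, i.e.\ take $l=j^\vee$. The signs $(-1)^{-a-\delta}$ and $-(-1)^{-a+1-\delta}$ agree, so they cancel. This yields
\[
\epsilon(i^\vee)\,\alpha_{j^\vee,i^\vee}(-a+1)=\epsilon(j^\vee)\,\alpha_{i,j}(a).
\]
Multiplying by $\epsilon(j^\vee)$ (which is $\pm1$) gives (iii). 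Since every step is an equivalence of coefficient identities, this proves both directions.
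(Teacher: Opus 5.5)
Your approach is the right one and matches the paper. Part (b) is the paper's computation verbatim: same formulas for ${}^tgJ(\Delta_i(a))$ and $Jg(\Delta_i(a))$, the reindexing $l=j^{\vee}$ (i.e.\ $I(-a+1)^{\vee}=I(a-1)$), and the sign cancellation $(-1)^{-a-\delta}=-(-1)^{-a+1-\delta}$. For part (a) the paper simply cites \cite[Lemma II.4]{MW86}, whose proof is the coefficient comparison you set up. Two of your steps are correct:
\begin{itemize}
\item the relations $\alpha_{i,j}(a)=\alpha_{i,j}(a+1)$ for $a\in\Delta_i\cap\Delta_j$;
\item constancy along $S:=\Delta_i\cap(\Delta_j+1)$.
\end{itemize}

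However, the endpoint analysis, which you yourself identify as the key step, is wrong as written. At the top, the relation at $a=e_i$ is present exactly when $e_i\in\Delta_j$. For $S\neq\emptyset$ this means $e_i\le e_j$, and then it forces $\alpha_{i,j}(e_i)=\alpha_{i,j}(e_i+1)=0$. So a nonzero constant requires $e_i\ge e_j+1$, i.e.\ $e_j<e_i$. This is the reverse of the inequality $e_i\le e_j+1$ you state.

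With your two conditions as written, $\Delta_j=[0,5]$, $\Delta_i=[1,2]$ would be allowed a nonzero coefficient. But $\Delta_j\not\prec\Delta_i$, and indeed the relation at $a=2$ kills it. Likewise, your proposed case splits ($b_i=b_j+1$ versus $b_i>b_j+1$; $e_i=e_j+1$ versus $e_i<e_j+1$) do not separate the cases where the constant is free from those where it is forced to vanish.

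The correct bookkeeping, again assuming $S\neq\emptyset$, is as follows.
\begin{itemize}
\item Bottom: the relation at $a=b_j$ is present exactly when $b_j\in\Delta_i$, i.e.\ $b_i\le b_j$. Then $0=\alpha_{i,j}(b_j)=\alpha_{i,j}(b_j+1)$ kills the constant, so one needs $b_j<b_i$. This half you have right.
\item Top: one needs $e_j<e_i$, as above.
\item When both hold, $\Delta_i\cap\Delta_j=[b_i,e_j]$ and every relation links two points of $S=[b_i,e_j+1]$, so the constant is free.
\item In that case $S\neq\emptyset$ is exactly the third condition $b_i\le e_j+1$ of $\Delta_j\prec\Delta_i$.
\end{itemize}
The dichotomies to check are therefore $b_i\ge b_j+1$ versus $b_i\le b_j$, and $e_i\ge e_j+1$ versus $e_i\le e_j$. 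With this correction, the converse direction is indeed immediate (under precedence no boundary relation involves a vanishing coefficient), and your proof goes through.
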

\begin{proof}
    Part (a) is exactly \cite[Lemma II.4]{MW86}, which we omit. For Part (b), we have
    \[ {}^t g J (\Delta_i(a))= {}^t g( \epsilon(i^{\vee})(-1)^{-a-\delta} \Delta_{i^{\vee}}(-a) )= \sum_{j^{\vee} \in I(-a+1)} \epsilon(i^{\vee})(-1)^{-a-\delta} \alpha_{j^{\vee},i^{\vee}}(-a+1)\Delta_{j^{\vee}}(-a+1), \]
    and
    \[ Jg( \Delta_i(a))=J(\sum_{j \in I(a-1)} \alpha_{i,j}(a)\Delta_{j}(a-1))=\sum_{j \in I(a-1)} \epsilon(j^{\vee}) (-1)^{-a+1-\delta}\alpha_{i,j}(a) \Delta_{j^{\vee}}(-a+1).\]
    Note that $I(-a+1)^{\vee}=I(a-1)$. Thus, comparing two sides, we see that ${}^t gJ+Jg=0$ if and only if Condition (iii) holds. This completes the proof of the lemma.    
\end{proof}

We also record the analogous statement for $H_{\lambda} \cap C(f).$

\begin{lemma}\label{lem fg=gf Lie group}
    Let $h \in \End(W)^{\deg=0}$. Write
    \[ h(\Delta_{i}(a))= \sum_{j \in I(a)} \beta_{i,j}(a) \Delta_{j}(a).\]
  \begin{enumerate}
      \item [(a)] We have $h \in C(f)$ if and only if  
      \begin{enumerate}
            \item [(i)] $\beta_{i,j}(a)=0$ if $^+(\Delta_j)^- $ does not precede $ \Delta_i$, and
            \item [(ii)] $\beta_{i,j}(a)$ is independent of $a$ as long as $a \in \Delta_i$ and $a \in \Delta_{j}$.
        \end{enumerate}
        Thus, we let $\beta_{i,j}:= \beta_{i,j}(a)$ for any $a \in \Delta_i$ with $a \in \Delta_j$. 
        \item [(b)] We have $h \in H_{\lambda}$ if and only if $h$ is invertible and ${}^thJh=J$.
  \end{enumerate}
\end{lemma}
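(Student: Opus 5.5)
Part (b) is immediate from the definitions. By \S\ref{sec setup}, $H_{\lambda}$ consists of the invertible degree-$0$ endomorphisms $h$ of $W$ with ${}^thJh=J$; this is just the condition that $h$ preserve $\langle\cdot,\cdot\rangle$, since $\langle hv,hw\rangle=(hv,Jhw)=(v,{}^thJh\,w)$. So the substance lies in part (a), which I would prove by reducing it to \cite[Lemma II.4]{MW86}, in parallel with Lemma~\ref{lem fg=gf Lie algebra}(a).

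For part (a), write $h(\Delta_i(a))=\sum_{j\in I(a)}\beta_{i,j}(a)\Delta_j(a)$, with the convention that $\beta_{i,j}(a)=0$ whenever $a\notin\Delta_i$ or $a\notin\Delta_j$. Compute both sides of $fh=hf$ on a basis vector $\Delta_i(a)$:
\[ fh(\Delta_i(a))=\sum_{j:\,a,a+1\in\Delta_j}\beta_{i,j}(a)\Delta_j(a+1), \qquad hf(\Delta_i(a))=\sum_{j\in I(a+1)}\beta_{i,j}(a+1)\Delta_j(a+1) \text{ when } a+1\in\Delta_i, \]
and $hf(\Delta_i(a))=0$ otherwise. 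Compare the coefficients of $\Delta_j(a+1)$ for each pair $(i,j)$ and each $a$. When $a,a+1\in\Delta_i\cap\Delta_j$, this gives $\beta_{i,j}(a)=\beta_{i,j}(a+1)$, which propagates along the interval $\Delta_i\cap\Delta_j$ and yields (ii). The boundary cases give the vanishing conditions:
\begin{itemize}
\item If $a\in\Delta_i\cap\Delta_j$, $a+1\in\Delta_j$ and $a+1\notin\Delta_i$ (that is, $a=e_i<e_j$), then $\beta_{i,j}(a)=0$.
\item If $a+1\in\Delta_i\cap\Delta_j$ and $a\notin\Delta_j$ but $a\in\Delta_i$ (that is, $a+1=b_j>b_i$), then $\beta_{i,j}(a+1)=0$.
\end{itemize}
Combined with (ii), the entry $\beta_{i,j}$ is a nonzero-allowed constant exactly when $\Delta_i\cap\Delta_j\neq\emptyset$, $e_j\le e_i$ and $b_j\le b_i$. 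I expect these conditions to be equivalent to the intended relation ``${}^+(\Delta_j)^-$ precedes $\Delta_i$'' in the paper's convention. If ${}^+(\Delta_j)^-$ is read as $[b_j-1,e_j-1]$, this amounts to checking $b_j-1<b_i$, $e_j-1<e_i$ and $b_i\le e_j$. Conversely, if these conditions hold and (ii) holds, the coefficient identities above are satisfied for all $a$, so $h\in C(f)$.

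The main obstacle is not mathematical but notational: matching the boundary inequalities produced by the computation with the paper's ``precedes'' relation on the shifted segment. Here I would also handle the edge case where $\Delta_i\cap\Delta_j$ is empty, in which $\beta_{i,j}$ vanishes trivially. Beyond that, the argument is exactly the commutant computation of \cite[Lemma II.4]{MW86}. The form $J$ plays no role in part (a), because commuting with $f$ is a condition only on $\End(W)$.
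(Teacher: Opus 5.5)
Your proposal is correct and matches the paper, which cites \cite[Lemma II.5]{MW86} for part (a) and calls part (b) true by definition. Your coefficient comparison of $fh$ and $hf$ is exactly the commutant computation the paper omits, and your reading ${}^+(\Delta_j)^-=[b_j-1,e_j-1]$ is the one consistent with the paper's later uses of the lemma. One small correction: the degree-$0$ statement is \cite[Lemma II.5]{MW86}, not Lemma II.4, which is the degree $-1$ analogue behind Lemma~\ref{lem fg=gf Lie algebra}(a); your direct computation does not rely on that citation, so nothing breaks.
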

\begin{proof}
    Part (a) is exactly \cite[Lemma II.5]{MW86}, which we omit. Part (b) is by definition.
\end{proof}

\subsection{A technical proposition}
 Let $\{0,1,\ldots,r\} \subseteq I$ be the indices obtained from applying Steps 2 and 3 of Algorithm \ref{algo LM25} to $\mfr{m}$ and write $d=e(\Delta_0)$. The goal of this subsection is to prove Proposition \ref{prop g r+1=0}, which is crucial in the proof of Theorem \ref{thm bad}.
We first develop some notations.

\begin{defn}Fix a $g \in V_{\lambda}^{\ast} \cap C(f)$.
    \begin{enumerate}
        \item[(a)]  For $i,j \in I$, we define $\psi_{i,j}^s(g) \in \BC$ by
        \[ g^s(\Delta_i(e(\Delta_i)))= \sum_{j\in I(e(\Delta_i)-s)} \psi_{i,j}^s(g) \Delta_j(e(\Delta_i)-s).\]
        We set $\psi_{i,j}^s(g)=0$ if $e(\Delta_i)-s \not\in \Delta_j$.
        \item [(b)] For $i,j \in I$ with $s:=e(\Delta_i)-e(\Delta_j)$, if $s>0$, we define
        \[ \Omega_{i,j}:=\{(i,i_1,\ldots,i_{s-1},j)\ | \ \Delta_i \succ \Delta_{i_1}\succ \cdots \succ \Delta_{i_{s-1}}\succ \Delta_j\}.\]
        Otherwise if $s \leq 0$, we set $\Omega_{i,j}=\emptyset$. For any $(i,i_1,\ldots,i_{s-1},j) \in \Omega_{i,j}$, we define
        \[ \alpha_{(i,i_1,\ldots,i_{s-1},j)}(g):= \alpha_{i,i_1}(g) \alpha_{i_1, i_2}(g) \cdots \alpha_{i_{s-1},j}(g),\]
        where $\alpha_{i_{k},i_{k+1}}(g)$ is the complex number associated to $g$ as in Lemma \ref{lem fg=gf Lie algebra}(a).
     \end{enumerate}
\end{defn}

\begin{remark}\label{rmk Omega psi}\ 
\begin{enumerate}
    \item  If $(i,i_1,\ldots,i_{s-1},j) \in \Omega_{i,j}$, then $e(\Delta_{i_k})= e(\Delta_{i})-k$ for $1\leq k\leq s-1$.
    \item By Lemma \ref{lem fg=gf Lie algebra}(a), for $i,j \in I$ with $s:=e(\Delta_i)-e(\Delta_j)$, we have
    \[ \psi^{s}_{i,j}(g)= \sum_{ (i,i_1,\ldots,i_{s-1},j) \in \Omega_{i,j}} \alpha_{(i,i_1,\ldots,i_{s-1},j)}(g), \]
     and $\psi_{i,j}^t(g)=0$ if $t \neq s$.
\end{enumerate}
\end{remark}

\begin{lemma}\label{lem i i vee}
    Let $i\in I^+$ and $s:=e(\Delta_i)-e(\Delta_{i^{\vee}})$. We have $\psi_{i,i^{\vee}}^s(g)=0$ for any $g \in V_{\lambda}^{\ast} \cap C(f)$.
\end{lemma}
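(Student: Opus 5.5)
The plan is to compute $\psi^s_{i,i^{\vee}}(g)$ through the path expansion of Remark \ref{rmk Omega psi}(2), and then show that the terms cancel in pairs under a ``reverse-and-dualize'' involution on $\Omega_{i,i^{\vee}}$. The sign that forces cancellation will come from the symmetry condition (iii) of Lemma \ref{lem fg=gf Lie algebra}(b), together with $\epsilon(i)=1$ and $\epsilon(i^{\vee})=-1$ for $i\in I^+$.

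\emph{The degenerate cases $s\le 0$.} Suppose first $s\leq 0$. If $s<0$, the vector $g^s$ is not even defined, and the natural convention is $\psi^s=0$. If $s=0$, then $g^0=\id$, so $\psi^0_{i,i^{\vee}}(g)$ is the coefficient of $\Delta_{i^{\vee}}(e(\Delta_i))$ in $\Delta_i(e(\Delta_i))$. This coefficient is $0$ because $i\neq i^{\vee}$ (Definition \ref{def Mseg bad}(ii)). So assume $s>0$. By Remark \ref{rmk Omega psi}(2),
\[
\psi^s_{i,i^{\vee}}(g)=\sum_{\omega\in\Omega_{i,i^{\vee}}}\alpha_\omega(g).
\]

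\emph{The involution on $\Omega_{i,i^{\vee}}$.} For $\omega=(i,i_1,\ldots,i_{s-1},i^{\vee})$, set
\[
\omega^*:=\bigl(i,\,i_{s-1}^{\vee},\,\ldots,\,i_1^{\vee},\,i^{\vee}\bigr).
\]
Since $\Delta'\prec\Delta$ if and only if $\Delta^{\vee}\prec\Delta'^{\vee}$ (immediate from the definitions of $\prec$ and of $\Delta^{\vee}=[-e,-b]$), and $\Delta_{k^{\vee}}=\Delta_k^{\vee}$, the chain condition for $\omega$ dualizes to the chain condition for $\omega^*$, with the endpoints exchanged correctly because $(i^{\vee})^{\vee}=i$. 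Hence $\omega\mapsto\omega^*$ is an involution of $\Omega_{i,i^{\vee}}$.

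\emph{The sign relation.} Next, (iii) gives, for the constants $\alpha_{a,b}(g)$ of Lemma \ref{lem fg=gf Lie algebra}(a),
\[
\alpha_{a,b}(g)=\epsilon(a^{\vee})\epsilon(b^{\vee})\,\alpha_{b^{\vee},a^{\vee}}(g).
\]
Here one should check that the relevant $a$ lies in both ranges, so that the constants really are the ones in (iii); this is just a range check. Applying this to each factor of $\alpha_\omega(g)$, the interior signs $\epsilon(i_k^{\vee})^2=1$ telescope away, leaving
\[
\alpha_\omega(g)=\epsilon(i^{\vee})\epsilon(i)\,\alpha_{\omega^*}(g)=-\alpha_{\omega^*}(g),
\]
since $i\in I^+$ and $i^{\vee}\in I^-$. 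Consequently $\psi^s_{i,i^{\vee}}(g)=-\psi^s_{i,i^{\vee}}(g)$, so it vanishes. This argument treats fixed points $\omega=\omega^*$ correctly as well, because for them the relation reads $\alpha_\omega=-\alpha_\omega$, so each such term is individually $0$. (For even $s$ there are in fact no fixed points, since a fixed point would require $i_{s/2}=i_{s/2}^{\vee}$, which is impossible.)

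The main thing to get right is the sign bookkeeping in the telescoping product, namely that exactly $\epsilon(i)\epsilon(i^{\vee})$ survives. I expect this to be routine once (iii) is written in terms of the $a$-independent constants.
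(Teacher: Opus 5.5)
Your proof is correct and follows the paper's argument: you pair each chain $\omega\in\Omega_{i,i^{\vee}}$ with its reversed dual $\omega^*$, and condition (iii) of Lemma \ref{lem fg=gf Lie algebra}(b) gives $\alpha_\omega(g)=-\alpha_{\omega^*}(g)$, so the sum vanishes. Your execution is slightly cleaner than the paper's. You check $\omega^*\in\Omega_{i,i^{\vee}}$ directly from the equivalence $\Delta'\prec\Delta \iff \Delta^{\vee}\prec\Delta'^{\vee}$, and you get the sign by telescoping to $\epsilon(i)\epsilon(i^{\vee})=-1$. The paper instead first uses length equalities to show every chain consists of translates of $\Delta_i$, then locates the single $I^+\to I^-$ crossing, and treats the fixed points $\iota=\iota^{\vee}$ separately.
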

\begin{proof}
    If $ \Omega_{i,i^{\vee}}=\emptyset$, the lemma holds obviously by Remark \ref{rmk Omega psi}. Thus, we assume $\Omega_{i,i^{\vee}}$ is non-empty in the rest of the proof. Take any $\iota:=(i, i_1,\ldots, i_{r-1}, i^{\vee})\in \Omega_{i,i^{\vee}}$. The precedence condition and $e(\Delta_{i_k})= e(\Delta_{i})-k$ imply that
    \[ l(\Delta_i) \leq l(\Delta_{i_1})\leq \cdots \leq l(\Delta_{i_{r-1}}) \leq l(\Delta_{i^{\vee}}).\]
    Since $l(\Delta_{i^{\vee}})=l((\Delta_i)^{\vee})=l(\Delta_i)$, we see that the inequalities above are all equalities. Hence, $\Delta_{i_s}= [ b(\Delta_i)-s, e(\Delta_i)-s]$ for $1 \leq s \leq r-1$. Then it is not hard to check that the sequence of indices $\iota^{\vee}:=(i, i_{r-1}^\vee,\ldots, i_1^{\vee}, i^{\vee} )$ is also in $\Omega_{i,i^{\vee}}$ as well. 

    Now take any $\iota=(i, i_1,\ldots, i_{r-1}, i^{\vee})\in \Omega_{i,i^{\vee}}$ with $i, i_1,\ldots ,i_s \in I^+$ and $i_{s+1},\ldots, i_{r-1}, i^{\vee} \in I^{-}$. Suppose that $\iota=\iota^{\vee}$. Then $i_{s}^{\vee}= i_{s+1}$. Hence, Lemma \ref{lem fg=gf Lie algebra}(b) implies that for any $g \in V_{\lambda}^{\ast} \cap C(f)$,
    \[\alpha_{i_{s},i_{s+1}}(g)=- \alpha_{i_{s+1}^{\vee},i_{s}^{\vee}}(g)=- \alpha_{i_{s},i_{s+1}}(g). \]
    Hence, $\alpha_{i_{s},i_{s+1}}(g)=0$ and $\alpha_{\iota }(g)=0$. 
    
    Suppose $\iota\neq \iota^{\vee}$. Then similarly, Lemma \ref{lem fg=gf Lie algebra}(b) implies that 
    \begin{align*}
        \alpha_{\iota}(g)&= (\alpha_{i,i_1}(g)\cdots \alpha_{i_{s-1},i_s}(g)) \alpha_{i_s,i_{s+1}}(g)  (\alpha_{i_{s+1},i_{s+2}}(g)\cdots \alpha_{i_{r-1},i^{\vee}}(g))\\
        &= (\alpha_{i_1^{\vee},i^{\vee}}(g)\cdots \alpha_{i_{s}^{\vee},i_{s-1}^{\vee}}(g)) (- \alpha_{i_{s+1}^{\vee},i_{s}^{\vee}}(g))  (\alpha_{i_{s+2}^{\vee},i_{s+1}^{\vee}}(g)\cdots \alpha_{i,i_{r-1}^{\vee}}(g))\\
        &=- \alpha_{i, i_{r-1}^{\vee}}(g) \cdots \alpha_{i_{s+2}^{\vee},i_{s+1}^{\vee}}(g) \alpha_{i_{s+1}^{\vee},i_{s}^{\vee}}(g) \alpha_{i_{s}^{\vee},i_{s-1}^{\vee}}(g) \cdots \alpha_{i_{1}^{\vee},i^{\vee}}(g)\\
        &= -\alpha_{\iota^{\vee}}(g).
    \end{align*}
Therefore,
\begin{align*}
    \psi_{i,i^{\vee}}^s(g)&= \sum_{\iota \in \Omega_{i,i^{\vee}}}  \alpha_{\iota}(g)\\
    &= \sum_{\substack{\iota \in \Omega_{i,i^{\vee}},\ \iota =\iota^{\vee} }}  \alpha_{\iota}(g) 
    + \sum_{\substack{\{\iota,\iota^\vee \} \subset \Omega_{i,i^{\vee}}^2,\ \iota \neq \iota^{\vee}}} \alpha_{\iota}(g)+\alpha_{\iota^{\vee}}(g)\\
    &=0.
\end{align*}
    This completes the proof of the lemma.
\end{proof}

We need one more lemma as an analogue of \cite[Remark II.2.1]{MW86}.

\begin{lemma}\label{lem r+1}
     Let $\{0,1,\ldots,r\} \subseteq I$ be the indices obtained from applying Steps 2,3 of Algorithm \ref{algo LM25} to $(\mfr{m},\vee) \in \MSeg_{bad}$ and write $d=e(\Delta_0)$. Suppose $(i_0, \ldots, i_{s})$ is a sequence of indices such that
     \begin{itemize}
         \item $e(\Delta_{i_k})= d-k$ for $0 \leq k \leq r+1$, and 
         \item $\Delta_{i_0} \succ \cdots \succ \Delta_{i_{s}}$.
     \end{itemize}
     Suppose that $l(\Delta_k)>l(\Delta_{i_k})$ for some $1 \leq k \leq \min(s,r+1)$, where we formally set $l(\Delta_{r+1}):= +\infty$.
  Then there exists an index $i$ such that $\{i,i^{\vee}\} \subseteq \{i_0,\ldots, i_{s}\}$.
\end{lemma}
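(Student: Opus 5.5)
We take $k$ to be the \emph{smallest} index in $\{1,\ldots,\min(s,r+1)\}$ with $l(\Delta_k)>l(\Delta_{i_k})$. This covers the formal convention $l(\Delta_{r+1})=+\infty$, under which $k=r+1$ is allowed exactly when $s\geq r+1$. The plan is to show that $i_k$ was a legitimate candidate for condition (a) of Step 3 at stage $k$ of Algorithm \ref{algo LM25}, so that only condition (b) can have excluded it. Condition (b) then produces the pair $\{i,i^\vee\}$. We read the hypothesis on ends as $e(\Delta_{i_t})=d-t$ for $0\le t\le s$.

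\textbf{Step 1 (comparison for $t<k$).} By minimality of $k$, we have $l(\Delta_t)\le l(\Delta_{i_t})$ for all $0\le t<k$. For $t=0$ this also follows from Step 2. Since $e(\Delta_t)=e(\Delta_{i_t})=d-t$ (Remark \ref{rmk Omega psi}(1) applied to the algorithm's chain), this is equivalent to $b(\Delta_t)\ge b(\Delta_{i_t})$.

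\textbf{Step 2 ($i_k$ satisfies condition (a) at stage $k$).} We check that $\Delta_{i_k}\prec\Delta_{k-1}$ and $e(\Delta_{i_k})=d-k$. The ends satisfy $d-k<d-k+1$. For the beginnings, $b(\Delta_{i_k})<b(\Delta_{i_{k-1}})\le b(\Delta_{k-1})$, using $\Delta_{i_k}\prec\Delta_{i_{k-1}}$ and Step 1. The remaining inequality is $b(\Delta_{k-1})\le e(\Delta_{i_k})+1=d-k+1=e(\Delta_{k-1})$, which is automatic.

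\textbf{Step 3 ($i_k$ fails condition (b)).} Suppose $i_k$ satisfied (b). Then $i_k\in I_k$, so $I_k\neq\emptyset$. If $k=r+1$, this contradicts the termination of the algorithm at $r$. If $k\le r$, the maximality of $b(\Delta_k)$ in Step 3 gives $b(\Delta_k)\ge b(\Delta_{i_k})$, i.e. $l(\Delta_k)\le l(\Delta_{i_k})$, contradicting the choice of $k$. Hence (b) fails for $i_k$. This means there is some $j\le k-1$ with $\Delta_{i_k}^\vee=\Delta_j$, and the segment $\Delta_{i_k}$ occurs with multiplicity one in $\mfr{m}$. Because $i\mapsto i^\vee$ is a bijection matching $\Delta$ with $\Delta^\vee$, the segment $\Delta_j=\Delta_{i_k}^\vee$ also has multiplicity one.

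\textbf{Step 4 ($\Delta_j$ lies on the given chain).} Along the chain $\Delta_{i_0}\succ\cdots\succ\Delta_{i_s}$, the ends drop by exactly $1$ and the beginnings drop by at least $1$. Hence lengths are non-decreasing, so $l(\Delta_{i_j})\le l(\Delta_{i_k})$. On the other hand, $l(\Delta_{i_k})=l(\Delta_{i_k}^\vee)=l(\Delta_j)\le l(\Delta_{i_j})$ by Step 1. Thus all these lengths are equal. Since $e(\Delta_{i_j})=e(\Delta_j)$, this gives $\Delta_{i_j}=\Delta_j=\Delta_{i_k}^\vee$. Multiplicity one of this segment forces $i_j=i_k^\vee$. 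Therefore $i:=i_k$ satisfies $\{i,i^\vee\}\subseteq\{i_0,\ldots,i_s\}$.

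The main point needing care is Step 3. There one must handle the boundary case $k=r+1$ through the termination rule, and use the multiplicity-one clause of (b) so that equality of segments can be upgraded to equality of indices in Step 4.
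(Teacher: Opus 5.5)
Your proof is correct and follows essentially the same route as the paper. You pass to the minimal $k$, which is the paper's ``we may assume'' truncation to $s=k$. You check that $\Delta_{i_k}\prec\Delta_{k-1}$, so $i_k$ meets condition (a) of Step 3 and must therefore violate condition (b). You then use the length squeeze $l(\Delta_j)\le l(\Delta_{i_j})\le l(\Delta_{i_k})=l(\Delta_j)$ together with multiplicity one to get $i_j=i_k^\vee$, and you are slightly more explicit than the paper in separating the cases $k\le r$ (maximality of $b(\Delta_k)$) and $k=r+1$ (emptiness of $I_{r+1}$).
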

\begin{proof}
Note that $l(\Delta_0)\leq l(\Delta_{i_0})$ by Step 2 of Algorithm \ref{algo LM25}. We may assume $l(\Delta_k) \leq l(\Delta_{i_k})$ for $0 \leq k \leq s-1$ and $l(\Delta_{s})> l(\Delta_{i_s})$. 

We first show that for $0 \leq k \leq s-1$, the sequence $( 0,\ldots, k, i_{k+1},\ldots, i_{s})$ lies in $\Omega_{0,i_{s}}$. Indeed, this follows directly from the observation that if 
  $\Delta_{j_1}\prec \Delta_{j_2}$ and $j_2'$ is another index such that  $l(\Delta_{j_2'})\leq l(\Delta_{j_2})$ and $e(\Delta_{j_2'})=e(\Delta_{j_2})=e(\Delta_{j_1})+1$, then $\Delta_{j_1} \prec \Delta_{j_2'}$ as well.

Next, we verify that $i_s^{\vee}\in \{i_0 ,\ldots, i_{s-1}\}.$ By Step 3 of Algorithm \ref{algo LM25}, $i_s$ cannot lie in the set $I_s$ defined there. Since $l(\Delta_{s-1}) \leq l(\Delta_{i_{s-1}})$, we have
 $\Delta_{i_s}\prec \Delta_{s-1}$. Hence, the index $i_s$ satisfies Condition (a) in Step 3 of Algorithm \ref{algo LM25}. Therefore, Condition (b) must fail for $i_s$, and we conclude that $i_s^{\vee}=j \in \{0,\ldots, s-1\}$ and $ \{i \in I \ | \ \Delta_i=\Delta_{i_s} \}=\{i_s\}$, which is related to the following set by taking the involution $\vee$
 \begin{align}\label{eq lemma seq}
     \{i \in I \ | \ \Delta_i=\Delta_{j} \}=\{j\}.
 \end{align} On the other hand, we have
\[ l(\Delta_j) \leq l(\Delta_{i_j}) \leq l(\Delta_{i_{j+1}}) \leq \cdots \leq l(\Delta_{i_s})= l(\Delta_{j^{\vee}}).\]
Since $l(\Delta_{j^{\vee}})=l(\Delta_{j})$, the above inequalities are all equalities. Now $e(\Delta_{i_j})=d-j=e(\Delta_j)$ and $l(\Delta_j) = l(\Delta_{i_j}) $ imply that $i_s^{\vee}=j=i_j$ by \eqref{eq lemma seq}. This completes the proof of the lemma.
\end{proof}

    

Now we state and prove the main proposition of this subsection. Note that in the statement below, $\psi^s_{0,j}(g) \neq 0$ implies that $e(\Delta_j)=d-s$. In this case, $^+(\Delta_j)^- $ precedes $ \Delta_s$ if and only if $l(\Delta_j) \geq l(\Delta_s)$.

\begin{prop}\label{prop g r+1=0}
Let $\{0,1,\ldots,r\} \subseteq I$ be the indices obtained from applying Steps 2,3 of Algorithm \ref{algo LM25} to $(\mfr{m},\vee) \in \MSeg_{bad}$ and write $d=e(\Delta_0)$. For any $g \in V_{\lambda}^{\ast} \cap C(f)$, the following holds.
\begin{enumerate}
    \item [(a)]We have $g^{r+1}(\Delta_{0}(d))=0.$
    \item [(b)] If $\psi^{s}_{0,j}(g) \neq 0$ for some $1\leq s \leq r$, then $^+(\Delta_j)^- $ precedes $ \Delta_s$.
\end{enumerate}
\end{prop}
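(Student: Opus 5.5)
The plan is to reduce both parts to the vanishing of sums $\psi^{s}_{0,j}(g)$, which by Remark \ref{rmk Omega psi}(2) are sums of products $\alpha_\iota(g)$ over chains $\iota\in\Omega_{0,j}$. We then show that every chain in the relevant $\Omega_{0,j}$ contains a pair $\{i,i^{\vee}\}$ (Lemma \ref{lem r+1}), and that the contributions of such chains cancel in groups (Lemma \ref{lem i i vee}).

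For (a), we have $g^{r+1}(\Delta_0(d))=\sum_j \psi^{r+1}_{0,j}(g)\Delta_j(d-r-1)$, so it suffices to show $\psi^{r+1}_{0,j}(g)=0$ for every $j$ with $e(\Delta_j)=d-r-1$. Let $(0,i_1,\ldots,i_{r+1})\in\Omega_{0,j}$. By the convention $l(\Delta_{r+1})=+\infty$, the hypothesis of Lemma \ref{lem r+1} holds with $k=r+1$, so the chain contains a pair $\{i_a,i_c\}$ with $i_c=i_a^{\vee}$ and $a<c$. For (b), suppose $\psi^s_{0,j}(g)\neq 0$ but $^+(\Delta_j)^-$ does not precede $\Delta_s$, i.e.\ $l(\Delta_j)<l(\Delta_s)$ (by the remark before the proposition). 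Then every chain $(0,i_1,\ldots,i_s=j)\in\Omega_{0,j}$ satisfies $l(\Delta_s)>l(\Delta_{i_s})$, so Lemma \ref{lem r+1} with $k=s$ again gives a pair inside the chain. In both parts we are therefore reduced to the following claim: if every chain in $\Omega_{0,j}$ contains a dual pair, then $\psi^{s}_{0,j}(g)=0$.

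To prove the claim, attach to each chain a canonical pair position. I would first try taking $a$ minimal such that $i_a^{\vee}$ occurs later, at position $c$. Since $e(\Delta_{i_a})>e(\Delta_{i_c})=-b(\Delta_{i_a})$, we get $b+e>0$, so $i_a\in I_{>0}\subseteq I^+$ and Lemma \ref{lem i i vee} applies to $i=i_a$. Fix the prefix $(0,\ldots,i_a)$ and the suffix $(i_c,\ldots,j)$. The chain then factors as (prefix product)$\cdot\alpha_{\iota'}(g)\cdot$(suffix product), where $\iota'$ ranges over $\Omega_{i_a,i_a^{\vee}}$. Summing over $\iota'$ gives (prefix)$\cdot\psi_{i_a,i_a^{\vee}}(g)\cdot$(suffix), which is $0$ by Lemma \ref{lem i i vee}. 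Hence $\psi^s_{0,j}(g)$, being a sum of such blocks, vanishes.

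The main obstacle is making the grouping well defined. We need every substitute middle section $\iota'\in\Omega_{i_a,i_a^{\vee}}$ to yield a chain whose canonical pair is still $(a,c)$. Otherwise the blocks would not partition $\Omega_{0,j}$, and we would be summing over a set we do not control.

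The proof of Lemma \ref{lem i i vee} shows that all middle segments of a chain from $i_a$ to $i_a^{\vee}$ have the same shapes $[b(\Delta_{i_a})-k,e(\Delta_{i_a})-k]$. So replacing the middle section only changes indices within fixed segment shapes, and no new pair can appear strictly between $a$ and $c$ whose earlier member lies before $a$, except possibly through a duplicate index whose dual is some $i_{a'}$ with $a'<a$. I would handle this by choosing instead the pair whose closing position $c$ is minimal, with $a$ then determined by $c$, and checking that this choice is invariant under changing the middle section. If that fails, I would fall back on an inclusion–exclusion over all pairs, each term vanishing by the same factorization.
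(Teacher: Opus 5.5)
Your skeleton is the paper's: reduce via Remark \ref{rmk Omega psi} to sums $\sum_{\iota\in\Omega_{0,j}}\alpha_\iota(g)$, use Lemma \ref{lem r+1} to see that every relevant chain contains a dual pair, and group chains by a fixed prefix ending at some $k\in I^+$ and a fixed suffix starting at $k^{\vee}$, so that each group sums to zero by Lemma \ref{lem i i vee}. (These are the paper's sets $\Xi_{0,j}(k)$ and $\Omega_{\iota_1,\iota_2}$.) The gap is the step you yourself call the main obstacle: you never show that these groups partition $\Omega_{0,j}$.

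Moreover, the fix you propose does not work. The pair with minimal closing position is the innermost one, and it is not stable under substituting the middle section, because the new middle can itself contain a dual pair. For example, suppose a chain runs through $[1,2]\succ[0,1]\succ[-1,0]\succ[-2,-1]$, the outer two indices are dual to each other, and $[0,1]$ occurs for two indices $i_1\neq i_1'$. The chain with middle $(i_1',i_1^{\vee})$ has the outer pair as its canonical pair. Yet its group contains the chain with middle $(i_1,i_1^{\vee})$, whose innermost pair is $(i_1,i_1^{\vee})$. So the groups overlap, and their total is no longer $\psi^{s}_{0,j}(g)$. The inclusion--exclusion fallback is not specified enough to count as an argument.

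What is missing is a nesting property of dual pairs. With it, your first choice already works: minimal opening position $a$, i.e.\ the outermost pair. This is exactly what the paper's condition $\{0^{\vee},i_1^{\vee},\ldots,i_s^{\vee}\}\cap(i_{s+1},\ldots,i_t,j)=\emptyset$ encodes.

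Along any chain $(i_0,\ldots,i_s)\in\Omega_{0,j}$, set $\sigma_t:=b(\Delta_{i_t})+e(\Delta_{i_t})$. This is strictly decreasing in $t$, since at each step $e$ drops by exactly $1$ and $b$ by at least $1$. Also $b+e$ changes sign under $\Delta\mapsto\Delta^{\vee}$. Suppose $i_a,i_a^{\vee}$ sit at positions $a<c$. Then for every $a'<a$ and every $a<m<c$,
\[
\sigma_m>\sigma_c=-\sigma_a>-\sigma_{a'}.
\]
So the segment at position $m$ cannot equal $\Delta_{i_{a'}}^{\vee}$ even as a segment, whichever duplicate index is placed there. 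This disposes of your duplicate-index worry.

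Consequently, after any substitution of the middle section no dual pair opens before $a$. The prefix and suffix are unchanged, so $a$ remains the minimal opening position. The closing position $c$ is forced, since positions are distinguished by their endpoint $e$. The groups are therefore well defined and pairwise disjoint, they cover $\Omega_{0,j}$ by Lemma \ref{lem r+1}, and each sums to $0$; this gives (a) and (b). The same monotonicity is also what makes the paper's union over the sets $\Xi_{0,j}(k)$ disjoint.
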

\begin{proof}
  In this proof, if $\iota_1=(i_0,\ldots, i_s)$ and $\iota_2=(i_{s+1},\ldots, i_t)$ are two sequences of indices, then we define $(\iota_1,\iota_2):=(i_0,\ldots, i_s, i_{s+1},\ldots, i_t)$.
By Remark \ref{rmk Omega psi},
it suffices to show that for any $g \in V_{\lambda}^{\ast} \cap C(f)$, and $j \in I$ such that
\begin{enumerate}
    \item [(a)] $e(\Delta_j)=d-r-1$, or
    \item [(b)] $e(\Delta_j)= d-s$ but    
    $l(\Delta_j)<l(\Delta_{s})$,
\end{enumerate}
we must have
\[ \sum_{\iota \in \Omega_{0,j}} \alpha_{\iota}(g)=0.\]

For any $k \in I^+$, we define a (possibly empty) set
\[ \Xi_{0,j}(k):= \{((0,i_1,\ldots, i_s, k), (k^{\vee},i_{s+1},\ldots, i_t,j) )\in \Omega_{0,k}\times \Omega_{k^{\vee},j} \ | \ \{0^{\vee}, i_1^{\vee},\ldots, i_s^{\vee}\}\cap (i_{s+1},\ldots, i_t,j)=\emptyset  \}. \]
Then for each $(\iota_1,\iota_2)\in \Xi_{0,j}(k)$, we define
\[ \Omega_{\iota_1,\iota_2}:= \{ (\iota_1, \iota' , \iota_2 ) \in \Omega_{0,j} \ | \ (k,\iota', k^{\vee}) \in \Omega_{k,k^{\vee}} \}.\]
Note that this set is in bijection with $\Omega_{k,k^{\vee}}$ in an obvious way. Thus, Lemma \ref{lem i i vee} implies that for any $g \in V^{\ast} \cap C(f)$, we have ($s:=e(\Delta_k)-e(\Delta_{k^{\vee}})$)
\begin{align}\label{eq alpha iota 1 iota 2=0}
    \sum_{\iota \in \Omega_{\iota_1, \iota_2}} \alpha_{\iota}(g)= \alpha_{\iota_1}(g)\psi_{k,k^{\vee}}^s \alpha_{\iota_2}(g)=0.
\end{align}

Finally, we have
\[ \Omega_{0,j}=\bigsqcup_{k\in I^+} \bigsqcup_{(\iota_1,\iota_2)\in \Xi_{0,j}(k)} \{ \Omega_{\iota_1,\iota_2} \}.\]
Indeed, the definition of $\Xi_{0,j}(k)$ guarantees that the right hand side is a disjoint union of subsets of $\Omega_{i,j}$.
On the other hand, Lemma \ref{lem r+1} implies that the left hand side is contained in the right hand side. Then as a consequence of \eqref{eq alpha iota 1 iota 2=0}, we obtain that 
\[\sum_{\iota \in \Omega_{0,j}} \alpha_{\iota}(g)= \sum_{k\in I^+} \sum_{(\iota_1,\iota_2)\in \Xi_{0,j}(k)} \sum_{\iota \in \Omega_{\iota_1,\iota_2}} \alpha_{\iota}(g)=0.\]
This completes the proof of the proposition.
\end{proof}

\subsection{\texorpdfstring{Preparations for proving Theorem \ref{thm bad}}{}}
Following the strategy of M{\oe}glin-Waldspurger, we verify analogs of Assertions (1) and (3) in the proof of \cite[Proposition II.6]{MW86} in this subsection. Let $\{0,1,\ldots,r\} \subseteq I$ be the indices obtained from applying Steps 2 and 3 of Algorithm \ref{algo LM25} to $\mfr{m}$ and write $d=e(\Delta_0)$. According to Step 3 of Algorithm \ref{algo LM25}, we may assume that 
\[\{0,\ldots, r\} \cap \{0^{\vee},\ldots, r^{\vee}\}= \emptyset\]
by replacing certain $i$ with $i' \neq i$ such that $\Delta_i=\Delta_{i'}$ if necessary.

Let $U:=U_1\oplus U_2$ where
\begin{align*}
    U_1&:= \bigoplus_{i=0}^r \BC \Delta_i(d-i), \\
     U_2&:= \bigoplus_{i=0}^r   \BC \Delta_{i^{\vee}}(i-d), 
\end{align*}
and let $W^-$ be the orthogonal complement of $U$ with respect to the inner product $(\cdot,\cdot)$, so that $W=W^- \oplus U$.
It is clear from the definition that $W^-\cong W_{\mfr{m}^{\#}}$, the vector space associated to $\mfr{m}^{\#}$ defined in Step 4 of Algorithm \ref{algo LM25}. Moreover, the $\epsilon$-symmetric bilinear form $\langle \cdot, \cdot \rangle_{\mfr{m}^{\#}}$ on $W^-$ defined by the same recipe in \S \ref{sec setup} is exactly the restriction of $\langle \cdot, \cdot \rangle_{\mfr{m}}$.

Let $\mathcal{W}_1, \mathcal{W}_2$ be two vector subspaces of $W$, we let $\iota_{\mathcal{W}_1}: \mathcal{W}_1 \hookrightarrow W$ be the inclusion and $p_{\mathcal{W}_2}: W \to \mathcal{W}_2$ be the orthogonal projection with respect to $(\cdot,\cdot)$. For any endomorphism $h \in \End(W)$, we define $h_{\mathcal{W}_1,\mathcal{W}_2} \in \Hom_{\BC}(\mathcal{W}_1, \mathcal{W}_2)$ by
\[ h_{\mathcal{W}_1,\mathcal{W}_2}:= p_{\mathcal{W}_2} \circ h \circ \iota_{\mathcal{W}_1}.\]
We shall write $h_{\mathcal{W}_1}:=h_{\mathcal{W}_1,\mathcal{W}_1}$ for short. Also, we write 
\[ h_{\mathcal{W}_1,W}= h_{\mathcal{W}_1,U_1}\oplus h_{\mathcal{W}_1,W^{-}} \oplus h_{\mathcal{W}_1,U_2}, \ h_{W,\mathcal{W}_2}= h_{U_1,\mathcal{W}_2}\oplus h_{W^-,\mathcal{W}_2} \oplus h_{U_2,\mathcal{W}_2}. \]


\begin{lemma}\label{lem first conjugation}
    There exists a $g \in C_{\widehat{\phi}}^{\ast}\cap C(f)$ such that for $0 \leq k \leq r$,
    \[g(\Delta_k(d-k)) =\begin{cases}
        \Delta_{k+1}(d-k-1) &\text{ if }0 \leq k \leq r-1,\\
        0& \text{ if }k=r.
    \end{cases}\]
    In particular, $g(U_1) \subset U_1$.
\end{lemma}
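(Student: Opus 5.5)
The plan follows Assertion (1) in the proof of \cite[Proposition II.6]{MW86}. By Remark \ref{rmk Pyasetskii involution}, ${}^t C_{\widehat{\phi}} \cap C(f)$ is dense in the commutant of $f$ inside $V_{\lambda}^{\ast}$, namely $V_{\lambda}^{\ast}\cap C(f)$. Here I identify $C^{\ast}_{\widehat{\phi}}$ with ${}^tC_{\widehat{\phi}}$, using $f\in C_\phi$ from Lemma \ref{lem f in C phi}. So it suffices to produce a nonempty Zariski open subset $\mathcal{O}$ of the vector space $V_{\lambda}^{\ast}\cap C(f)$. Each $g\in\mathcal{O}$ should become, after conjugation by a suitable $h\in H_\lambda\cap C(f)$, an element with the required shape on $U_1$. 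Since $H_\lambda\cap C(f)$ preserves both $C(f)$ and $H_\lambda$-orbits, a generic $g\in {}^tC_{\widehat{\phi}}\cap C(f)\cap\mathcal{O}$ then conjugates to the desired element.

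\textbf{Step 1.} Take a generic $g\in V_\lambda^{\ast}\cap C(f)$. Lemma \ref{lem fg=gf Lie algebra} parametrizes it by the numbers $\alpha_{i,j}(g)$, subject to the symmetry (iii). Define vectors $v_k:=g^k(\Delta_0(d))$ for $0\le k\le r$. By Proposition \ref{prop g r+1=0}(a) we have $g(v_r)=0$. By Remark \ref{rmk Omega psi},
\[ v_k=\sum_{j} \psi^k_{0,j}(g)\Delta_j(d-k), \qquad e(\Delta_j)=d-k. \]
By Proposition \ref{prop g r+1=0}(b), every $j$ with nonzero coefficient satisfies that ${}^+(\Delta_j)^-$ precedes $\Delta_k$, i.e.\ $l(\Delta_j)\ge l(\Delta_k)$. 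The coefficient of $\Delta_k(d-k)$ is $\psi^k_{0,k}(g)$, which includes the chain term $\alpha_{0,1}\cdots\alpha_{k-1,k}$. I will check that this is a nonzero polynomial in the free parameters. The point is that $\{0,\dots,r\}\cap\{0^\vee,\dots,r^\vee\}=\emptyset$, so (iii) only pairs $\alpha_{k-1,k}$ with $\alpha_{k^\vee,(k-1)^\vee}$, which is a different coordinate. Let $\mathcal{O}$ be the locus where all $\psi^k_{0,k}(g)\ne0$.

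\textbf{Step 2.} Construct $h\in H_\lambda\cap C(f)$ by Gram--Schmidt-type elementary operations. Let $h_1$ be the degree-$0$ map sending $\Delta_k(a)\mapsto \sum_j c^{(k)}_j\Delta_j(a)$ for $a\in\Delta_k$, where $c^{(k)}_j:=\psi^k_{0,j}(g)$. This $h_1$ commutes with $f$ by Lemma \ref{lem fg=gf Lie group}(a), because $l(\Delta_j)\ge l(\Delta_k)$ and $e(\Delta_j)=e(\Delta_k)$ give ${}^+(\Delta_j)^-\prec\Delta_k$. On the remaining basis vectors $h_1$ is the identity, so it is unipotent-times-diagonal, hence invertible on $\mathcal{O}$. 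To land in $H_\lambda$, set $h:=h_1\cdot J^{-1}\,{}^th_1^{-1}J$ restricted appropriately; concretely, modify $h_1$ on the dual indices $k^\vee$ by the adjoint inverse $\Delta_{j^\vee}$-part so that ${}^thJh=J$. This works because $\{0,\dots,r\}$ and their duals are disjoint, so the two blocks do not interfere. Verifying this compatibility and the commutation with $f$ of the dual block is the main technical point. Then $h^{-1}gh$ sends $\Delta_k(d-k)\mapsto\Delta_{k+1}(d-k-1)$ and $\Delta_r(d-r)\mapsto0$.

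\textbf{Main obstacle.} I expect the hardest part to be showing that the symplectic/orthogonal correction on the $U_2$ side can be made while staying in $C(f)$ and not disturbing the action on $U_1$. I would try to define $h$ on the dual indices by the formula forced by ${}^thJh=J$, namely $h|_{\text{dual}}=J^{-1}\,{}^t(h_1|_{U_1\text{-indices}})^{-1}J$. I would then check via Lemma \ref{lem fg=gf Lie group} that dualizing segments preserves the precedence condition. Once $g(\Delta_k(d-k))\in U_1$ for all $k$, the final claim $g(U_1)\subset U_1$ follows immediately.
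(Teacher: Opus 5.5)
Your Step 1 is essentially the paper's argument. The paper checks that $\psi^k_{0,k}\not\equiv0$ by evaluating at an explicit $g''$ with $\alpha_{t-1,t}=1$, the dual coordinates forced by (iii), and all other coordinates $0$. You should do the same: noting only that $\alpha_{k-1,k}$ and $\alpha_{k^\vee,(k-1)^\vee}$ are different coordinates does not rule out cancellation between different chains.

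The genuine gap is in Step 2, exactly where you assert that the two blocks ``do not interfere''. Any isometry $h$ with $h(\Delta_k(d-k))=g^k(\Delta_0(d))$ forces the vectors $g^k(\Delta_0(d))$, $0\le k\le r$, to span a totally isotropic subspace, because $U_1$ is totally isotropic. This does not follow from $\{0,\dots,r\}\cap\{0^\vee,\dots,r^\vee\}=\emptyset$. Using ${}^tgJ=-Jg$,
\[ \langle g^i(\Delta_0(d)),g^j(\Delta_0(d))\rangle=(-1)^i\langle \Delta_0(d),g^{i+j}(\Delta_0(d))\rangle , \]
which vanishes for degree reasons unless $i+j=2d$, and in that case equals $\pm\psi^{2d}_{0,0^\vee}(g)$. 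Its vanishing is Lemma \ref{lem i i vee}, which is where bad parity is used in this lemma beyond Proposition \ref{prop g r+1=0}. Your argument never invokes it. In the paper this condition is precisely the compatibility equation ${}^th_{U_1',W}\,J\,h_{U_1',W}=0$.

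Independently, your recipe (identity on the remaining indices, $J^{-1}\,{}^t(h_1|_{U_1'})^{-1}J$ on the dual indices) does not produce an element of $H_\lambda$. Suppose $\psi^k_{0,m}(g)\neq0$ for some $m\notin\{0,\dots,r\}\cup\{0^\vee,\dots,r^\vee\}$. Then for $a\in\Delta_k\subseteq\Delta_m$ you get $\langle h(\Delta_k(a)),h(\Delta_{m^\vee}(-a))\rangle=\pm\psi^k_{0,m}(g)\neq0=\langle\Delta_k(a),\Delta_{m^\vee}(-a)\rangle$.

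The paper repairs this by writing $W=U_1'\oplus W'\oplus U_2'$ and imposing $h_{W'}=\id_{W'}$ and $h_{W',U_1'}=h_{U_2',U_1'}=h_{U_2',W'}=0$. Then ${}^thJh=J$ forces your formula $h_{U_2'}=J_{U_2',U_1'}^{-1}\,{}^th_{U_1'}^{-1}J_{U_2',U_1'}$. It also forces a nonzero correction $h_{W',U_2'}=-J_{U_2',U_1'}^{-1}\,{}^th_{U_1'}^{-1}\,{}^th_{U_1',W'}J_{W'}$. The isotropy equation above is then the only remaining consistency condition. You would also have to check that this $W'\to U_2'$ piece commutes with $f$. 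The paper does this via Lemma \ref{lem fg=gf Lie group} and the equivalence ${}^+(\Delta_{j^\vee})^-\preceq\Delta_{i^\vee}\iff{}^+(\Delta_i)^-\preceq\Delta_j$.
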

\begin{proof}
    First, we show that there exists a $g' \in C_{\widehat{\phi}}^{\ast} \cap C(f)$ such that 
    \begin{enumerate}
        \item [(1)] $(g')^{r+1}(\Delta_{0}(d))=0$, and
        \item [(2)] $\psi^{s}_{0,s}(g') \neq 0$ for $1 \leq s \leq r$, and 
        \item [(3)] if $\psi^{s}_{0,j}(g') \neq 0$, then $^+(\Delta_j)^- $ precedes $ \Delta_s$.
    \end{enumerate}
    Indeed, Conditions (1) and (3) hold for every $g' \in C(f) \cap V_{\lambda}^{\ast}$ by Proposition \ref{prop g r+1=0}. For Condition (2), since $C_{\widehat{\phi}}^{\ast} \cap C(f)$ is dense in $C(f)$ by definition (see Remark \ref{rmk Pyasetskii involution}), it suffices to show that there exists an $g'' \in C(f)\cap V_{\lambda}^{\ast}$ such that $\psi^{s}_{0,s}(g'') \neq 0$ for $1 \leq s \leq r$. Indeed, we can take $g'' \in \End(W)$ defined by
    \begin{align*}
        g'' (\Delta_{i}(a)):= \begin{cases}
            \Delta_{i+1}(a-1) &\text{ if }i \in \{0,\ldots, r-1\},\\
            \epsilon(j^{\vee})\epsilon((j-1)^{\vee}) \Delta_{(j-1)^{\vee}}(a-1) &\text{ if }i^{\vee}=j \in \{1,\ldots, r\},\\
            0&\text{ otherwise}.
        \end{cases}
    \end{align*}

Next, we construct an $h \in \End(W)^{\deg=0}$ as follows. 
{Let $U_1'=\bigoplus_{i=0}^r\bigoplus_{a\in\Delta_i}\BC\Delta_i(a).$ We define $h_{U_1',W} \in \Hom_{\BC}(U_1',W)$ by
\begin{align}\label{eq h U1' W}
    h_{U_1', W}(\Delta_{i}(a)):= \begin{cases}
\Delta_0(a) &\text{ if }i\in \{0\},\\
 \sum_{j}\psi_{0,j}^{i}(g')\Delta_j(a)&\text{ if }i \in \{1,\ldots, r\}. \end{cases}
\end{align}
As a special case of the above formula, for $i\in \{0,\ldots, r\}$, we have
\begin{align}\label{eq design h}
    h_{U_1',W}(\Delta_i(d-i))= (g')^{i}(\Delta_0(d)).
\end{align}
Also, for $j \in \{1,\ldots, r\}$, we have $\psi_{0,j}^i(g')=0$ unless $j=i$ by Remark \ref{rmk Omega psi}(2). In other words, 
\begin{equation}\label{eqn h U1 diagonal}
h_{U_1'}(\Delta_{i}(a))=\psi_{0,i}^i(g')\Delta_{i}(a)    
\end{equation}
for $i \in \{1,\ldots, r\}$. Note that $\psi_{0,i}^i(g')\neq 0$ for $i \in \{1,\ldots, r\}$ by Condition (2) for $g'$ above.
}


{
Let $U'=U_1'\oplus U_2'$ where $U_2'=\bigoplus_{i=0}^r\bigoplus_{a\in\Delta_{i^\vee}}\BC\Delta_{i^\vee}(a)=\bigoplus_{i=0}^r\bigoplus_{a\in\Delta_{i}}\BC\Delta_{i^\vee}(a-d)$ and $W'$ denote the orthogonal complement of $U'$ in $W$ with respect to $(\cdot,\cdot),$ i.e., $W=U'\oplus W'.$
We claim that there is a unique extension $h\in \End(W)^{\deg=0}$ of $h_{U_1',W}$ such that 
\begin{itemize}
    \item[(a)] $h_{W'}=\id_{W'}$, $h_{W', U_1'}=0$, $h_{U_2',U_1'}=0$ and $h_{U_2',W'}=0$, and
    \item[(b)] ${}^t h J h=J$, and 
    \item[(c)] $fh=hf$.
\end{itemize}

First, we show the uniqueness and existence of an extension satisfying Conditions (a) and (b). We begin with uniqueness. Since $J(U_1')=U_2'$, $J(W')=W'$ and $J(U_2')=U_1'$, we may write
\[ J=J_{U_1',U_2'}\oplus J_{W'} \oplus J_{U_2',U_1'}\]
where $J_{U_1',U_2'}$, $J_{W'}$ and $J_{U_2',U_1'}$ are invertible. Condition (a) implies that 
\[ h= h_{U_1'} \oplus h_{U_1',W'} \oplus h_{U_1',U_2'} \oplus \id_{W'} \oplus h_{W', U_2'}\oplus h_{U_2'}, \]
where $h_{U_1'} \oplus h_{U_1',W'} \oplus h_{U_1',U_2'}= h_{U_1', W}$ is given in \eqref{eq h U1' W}.
A direct computation translates Condition (b) into the following equations (recall that {${}^tJ= \epsilon J$}).
\begin{align*}
    \begin{cases}
        {}^t h_{U_1',U_2'} \circ J_{U_1',U_2'} \circ h_{U_1'}+ {}^th_{U_1',W'}\circ J_{W'}\circ  h_{U_1',W'}+ {}^t h_{U_1'} \circ J_{U_2',U_1'}\circ h_{U_1',U_2'}&=0,\\
        {}^t h_{U_1',W'} \circ J_{W'}+ {}^th_{U_1'} \circ J_{U_2',U_1'} \circ h_{W',U_2'}&=0,\\
        {}^th_{U_1'} \circ J_{U_2',U_1'}\circ h_{U_2'}&= J_{U_2',U_1'}.
    \end{cases}
\end{align*}
Since $h_{U_1'}$ and $J_{U_2',U_1'}$ are invertible, the second and the third equations imply that
\begin{align*}
    \begin{cases}
    h_{W',U_2'}&= - J_{U_2',U_1'}^{-1} \circ {}^th_{U_1'}^{-1} \circ {}^t h_{U_1',W'} \circ J_{W'},\\ 
        h_{U_2'}&=  J_{U_2',U_1'}^{-1} \circ {}^th_{U_1'}^{-1} \circ J_{U_2',U_1'}.
    \end{cases}
\end{align*}
This shows the uniqueness of the extension with Conditions (a) and (b). 

For the existence, we verify the first equation. Note that it is equivalent to ${}^t h_{U_1',W} \circ J \circ h_{U_1',W}=0$, which is also equivalent to
\[ \langle h_{U_1',W}(\Delta_{i}(d-i)), h_{U_1',W}(\Delta_j(d-j))\rangle=0,\]
for any $i,j \in \{0,\ldots, r\}.$ Now by \eqref{eq design h} and the condition that ${}^t g'J=-J g'$, we have 
\begin{align*}
     \langle h_{U_1',W}(\Delta_{i}(d-i)), h_{U_1',W}(\Delta_j(d-j))\rangle&= \langle (g')^i (\Delta_0(d)), (g')^j (\Delta_0(d))\rangle\\
     &= ( (g')^i (\Delta_0(d)), J (g')^j (\Delta_0(d)) )\\
     &= ( \Delta_0(d), {}^t (g')^i J (g')^j (\Delta_0(d)) )\\
     &= (-1)^i ( \Delta_0(d),  J (g')^{i+j} (\Delta_0(d)) )\\
     &=(-1)^i \langle \Delta_0(d), (g')^{i+j} (\Delta_0(d))\rangle\\
     &=\begin{cases}
       (-1)^{d-\delta+i}  \psi_{0,0^{\vee}}^{i+j}(g') & \text{ if }i+j=2d,\\
       0& \text{ otherwise,}
     \end{cases}\\
     &=0.
\end{align*}
Here, the last equality follows from Remark \ref{rmk Omega psi}(2) and Lemma \ref{lem i i vee}.
}

{
Next, we show that this unique extension satisfies Condition (c).
Write \[ h(\Delta_{i}(a))= \sum_{j \in I(a)} \beta_{i,j}(a) \Delta_{j}(a).\]
By Lemma \ref{lem fg=gf Lie group}(1), it suffices to show that 
\begin{enumerate}
    \item [(i)] $\beta_{i,j}(a)=0$ if $^+(\Delta_j)^- $ does not precede $ \Delta_i$, and
            \item [(ii)] $\beta_{i,j}(a)$ is independent of $a$ as long as $a \in \Delta_i$ and $a \in \Delta_{j}$.
\end{enumerate}

{

We verify Conditions (i) and (ii) simultaneously. 
For $i\in\{0,\dots,r\}$ and any $j$, Conditions (i) and (ii) follows directly from Equation \eqref{eq h U1' W} and Proposition \ref{prop g r+1=0} that if $\psi^{i}_{0,j}(g') \neq 0$, then $^+(\Delta_j)^- $ precedes $ \Delta_i$.

Recall from Equation \eqref{eqn h U1 diagonal} that $h_{U_1'}(\Delta_i(a))=\psi_{0,i}^i(g')\Delta_i(a)$ for $\Delta_i(a)\in U_1'$ and so $h_{U_1'}$ is diagonal.
For $\Delta_i(a)\in U_2'$, we have \[h(\Delta_i(a))=(J_{U_2',U_1'}^{-1} \circ {}^th_{U_1'}^{-1} \circ J_{U_2',U_1'})(\Delta_i(a))\] and so it follows that Conditions (i) and (ii) hold for $i\in\{0^\vee,\dots,r^\vee\}.$

Finally, let $\Delta_i(a)\in W'.$ Then we have $h(\Delta_i(a))=\Delta_i(a)+h_{W',U_2'}(\Delta_i(a)).$  
In particular, we obtain that $\beta_{i,j}(a)=0$ for any $j\not\in\{0^\vee,\dots,r^\vee,i\}$. For $j=i,$ we have $\beta_{i,i}(a)=1$ but  ${}^+(\Delta_i)^-$ precedes $\Delta_i$ and so  Condition (i) still holds. 
For the remaining case, we have that
\begin{align*}
    h_{W',U_2'}(\Delta_i(a))&= (- J_{U_2',U_1'}^{-1} \circ {}^th_{U_1'}^{-1} \circ {}^t h_{U_1',W'} \circ J_{W'})(\Delta_i(a))\\
    &= (- J_{U_2',U_1'}^{-1} \circ {}^th_{U_1'}^{-1} \circ {}^t h_{U_1',W'} )(\epsilon(i^\vee)(-1)^{-a-\delta}\Delta_{i^\vee}(-a))\\
    &= \epsilon(i^\vee)(-1)^{-a-\delta} (- J_{U_2',U_1'}^{-1} \circ {}^th_{U_1'}^{-1} )(\sum_{0 \leq k \leq r} \beta_{k,i^{\vee}}(-a) \Delta_{k}(-a))\\
    &= \epsilon(i^\vee)(-1)^{-a-\delta} (- J_{U_2',U_1'}^{-1} )(\sum_{0 \leq k \leq r} \beta_{k,i^{\vee}}(-a) \psi_{0,k}^k(g')\Delta_{k}(-a))\\
    &= \sum_{0 \leq k \leq r} \gamma_{i,k} \beta_{k,i^{\vee}}(-a) \Delta_{k^{\vee}}(a)
\end{align*}
for some nonzero number $\gamma_{i,k}$ not depending on $a$. In other words, for $j\in\{0^\vee,\dots,r^\vee\},$ we have 
\[ \beta_{i,j}(a)= \gamma_{i,j^{\vee}} \beta_{j^\vee,i^{\vee}}(-a).\]
Then Conditions (i) and (ii) follows from the observation that
\[ {}^+(\Delta_{j^{\vee}})^- \preceq \Delta_{i^\vee} \Longleftrightarrow {}^+(\Delta_{i})^- \preceq \Delta_{j}\]
and the conclusion for $j^\vee\in\{0,\dots,r\}.$




}


We also have that $h\in H_\lambda.$ Indeed, by Lemma \ref{lem fg=gf Lie group}(2), it suffices to show that $h$ is invertible. Recall that $h= h_{U_1'} \oplus h_{U_1',W'} \oplus h_{U_1',U_2'} \oplus \id_{W'} \oplus h_{W', U_2'}\oplus h_{U_2'}.$ Since $h_{U_1'}(\Delta_{i}(a))=\psi_{0,i}^i(g')\Delta_{i}(a),$ we obtain that $h_{U_1'}$ is invertible. This implies that $h_{U_2'}= J_{U_2',U_1'}^{-1} \circ {}^th_{U_1'}^{-1} \circ J_{U_2',U_1'}$ is invertible. Finally, since $ h_{W'}=\mathrm{id}_{W'}$ is invertible, we obtain that $h$ is also invertible.
}

{
Let $g:=h^{-1}\circ g'\circ h.$ Since $g' \in C_{\widehat{\phi}}^{\ast} \cap C(f)$ and $h\in H_\lambda\cap C(f)$, we have $g \in C_{\widehat{\phi}}^{\ast} \cap C(f).$ Recall that 
for $i\in \{0,\ldots, r\}$ we have 
\[ h(\Delta_i(d-i))= (g')^{i}(\Delta_0(d)). \]
Thus, for $i\in \{0,\ldots, r-1\}$, we obtain
\[ g (\Delta_i(d-i))= \Delta_{i+1}(d-i-1).\]
Furthermore, since $(g')^{r+1}(\Delta_0(d))=0,$ we have
\[
g (\Delta_r(d-r))= 0.
\]
Therefore, there exists a $g \in C_{\widehat{\phi}}^{\ast}\cap C(f)$ such that for $0 \leq k \leq r$,
    \[g(\Delta_k(d-k)) =\begin{cases}
        \Delta_{k+1}(d-k-1) &\text{ if }0 \leq k \leq r-1,\\
        0& \text{ if }k=r.
    \end{cases}\]
This completes the proof of the lemma.
}
\end{proof}

Recall that $W^- \cong W_{\mfr{m}^{\#}}$. Let $\phi^{\#}$ be the $L$-parameter corresponding to $\mfr{m}^{\#}$ and let $\lambda^{\#}:= \lambda_{\phi^{\#}}$.
It follows from the same proof of Lemma \ref{lem f in C phi} that $f_{W^-} \in C_{\phi^{\#}} \subseteq V_{\lambda^{\#}}$.

\begin{lemma}\label{lem g-}
    Suppose that $g$ lies in $C(f) \cap V_{\lambda}^{\ast}$ and $g(U_1) \subseteq U_1$. Then $g_{W^-}$ lies in $C(f_{W^-}) \cap V_{\lambda^{\#}}^{\ast}$
\end{lemma}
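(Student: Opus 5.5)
The plan is to verify the two defining conditions for $g_{W^-}$ separately. The first is that $g_{W^-}\in V_{\lambda^{\#}}^{\ast}$, i.e. it has degree $-1$ on $W^-\cong W_{\mfr{m}^{\#}}$ and is skew for $\langle\cdot,\cdot\rangle_{\mfr{m}^{\#}}$. The second is that $g_{W^-}$ commutes with $f_{W^-}$. Everything should come from three structural facts:
\begin{itemize}
\item $J(U_1)=U_2$, $J(U_2)=U_1$ and $J(W^-)=W^-$;
\item the hypothesis $g(U_1)\subseteq U_1$;
\item the position of $U_1,U_2$ at the ends of segments. Each $\Delta_k(d-k)$ is the top vector of $\Delta_k$ (since $e(\Delta_k)=d-k$), and each $\Delta_{k^\vee}(k-d)$ is the bottom vector of $\Delta_{k^\vee}$.
\end{itemize}

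Degree is immediate. All of $U_1,U_2,W^-$ are spanned by basis vectors $\Delta_i(a)$, so the orthogonal projection $p_{W^-}$ has degree $0$ and $g_{W^-}=p_{W^-}\circ g\circ\iota_{W^-}$ still has degree $-1$. Moreover, the basis of $W^-$ matches that of $W_{\mfr{m}^{\#}}$, and the form restricts to $\langle\cdot,\cdot\rangle_{\mfr{m}^{\#}}$, as already noted in the paper.

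For skewness, take $v,w\in W^-$. Since $Jw\in W^-$, we have
\[
\langle g_{W^-}v,w\rangle=(p_{W^-}gv,Jw)=(gv,Jw)=\langle gv,w\rangle .
\]
Hence the identity ${}^tgJ+Jg=0$ on $W$ restricts to the corresponding identity on $W^-$.

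Next I will show that $p_{U_2}(g(W^-))=0$. The relation ${}^tgJ=-Jg$ together with $g(U_1)\subseteq U_1$ gives ${}^tg(U_2)={}^tg(JU_1)=-J g(U_1)\subseteq U_2$. Dualizing with respect to $(\cdot,\cdot)$, this means $g$ preserves $U_2^{\perp}=W^-\oplus U_1$. So for $v\in W^-$ the $U$-component of $gv$ lies in $U_1$.

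For commutation, take $v\in W^-$, write $x_U$ for the $U$-component of a vector $x$, and use $fg=gf$:
\[
f_{W^-}g_{W^-}v-g_{W^-}f_{W^-}v=-p_{W^-}f\,(gv)_U+p_{W^-}g\,(fv)_U .
\]
The first term vanishes: $(gv)_U\in U_1$ by the previous paragraph, and $f$ kills $U_1$ because its vectors sit at the top of their segments. For the second term, $fv$ can only meet $U$ in $U_1$. Indeed, a component in $U_2$ would need some $f\Delta_j(a)=\Delta_{k^\vee}(k-d)$, which is impossible because $k-d$ is the bottom of $\Delta_{k^\vee}$. So $(fv)_U\in U_1$, hence $g(fv)_U\in U_1$ by hypothesis, and $p_{W^-}$ kills it.

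The main obstacle is the bookkeeping in the reduction to these statements. One must confirm that $f_{W^-}$, as an element of $\End(W^-)$, really is the shift operator for $\mfr{m}^{\#}$; this was granted in the text, but I would recheck that the removed vectors are exactly the top of $\Delta_k$ and the bottom of $\Delta_{k^\vee}$. One must also confirm that, after the replacement made so that $\{0,\dots,r\}\cap\{0^\vee,\dots,r^\vee\}=\emptyset$, $U_1$ and $U_2$ are genuinely disjoint coordinate subspaces, so that the projections split as claimed.
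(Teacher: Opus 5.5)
Your proposal is correct and follows the paper's proof. The paper isolates the same facts: $f(U_1)=0$, $f(W)\subseteq U_1\oplus W^-$, and $g(W^-)\subseteq U_1\oplus W^-$, the last obtained from ${}^tg(U_2)\subseteq U_2$ via $J(U_1)=U_2$. It combines these with $g(U_1)\subseteq U_1$ just as your difference computation does, and your check of skewness on $W^-$ is more explicit than the paper's one-line remark.
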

\begin{proof}
We write $p:=p_{W, W^-}$ and $\iota:= \iota_{W^-, W}$ for short in this proof. Recall the definition that
\[ g_{W^-}= p \circ g \circ \iota,\ \ f_{W^-} =p \circ f \circ \iota.\]
Since the isomorphism $W_{\mfr{m}^{\#}} \cong W^-$ preserves the $\epsilon$-symmetric bilinear form, we have $g_{W^-} \in V_{\lambda^{\#}}^{\ast}$. It remains to show that $g_{W^-}$ lies in $C(f_{W^-})$.

First, we claim that
\begin{enumerate}
    \item [(i)] $J(U_1)=J(U_2)$.
    \item [(ii)] $f(U_1)=0$.
    \item [(iii)] $f(W) \subseteq U_1 \oplus W^-$.
    \item [(iv)]$g(W^-) \subseteq U_1 \oplus W^-$.
\end{enumerate}
Indeed claims (i), (ii), and (iii) follows directly from the explicit definitions of $f$ and $J$ (see \eqref{eq def f}, \eqref{eq def J}) and these subspaces. For claim (iv), since ${}^t gJ= -J g$, claim (i) implies that
\[{}^tg (U_2)= {}^t g J (U_1)= J g (U_1)\subseteq  J(U_1)=U_2. \]
This implies claim (iv).

Now combining claims (ii), (iv), we obtain
\begin{align}\label{eq f- g- 1}
    f \circ \iota \circ p \circ g \circ \iota= f  \circ g \circ \iota.
\end{align}
Also, the condition that $g(U_1)\subseteq U_1$ implies that  
\[p \circ g \circ \iota \circ p(U_1)=p \circ g(U_1)=0.\]
Hence, by claim (iii), we obtain that
\begin{align}\label{eq f- g- 2}
   p \circ g \circ \iota \circ p \circ f \circ \iota=  p \circ g \circ f \circ \iota.
\end{align}
Therefore,
    \[ f_{W^-} \circ g_{W^-}= p \circ f \circ \iota \circ p \circ g \circ \iota 
    \overset{\eqref{eq f- g- 1}}{=} p \circ f \circ g \circ \iota = p \circ g \circ f \circ \iota \overset{\eqref{eq f- g- 2}}{=} p \circ g \circ \iota \circ p \circ f \circ \iota=g_{W^-} \circ f_{W^-}. \]
\end{proof}

\begin{lemma}\label{lem second conjugation}
    Let $g \in C_{\widehat{\phi}}^{\ast} \cap C(f)$ be an element constructed in Lemma \ref{lem first conjugation}. There exists an $h \in H_{\lambda}$ such that $g':=h^{-1} g h$ satisfying that $g'(U)\subseteq U$, $g'(W^-) \subseteq W^-$, and $(g')_{W^-}=g_{W^-}$. 
    Note that $g'$ may not lie in $C(f)$.
\end{lemma}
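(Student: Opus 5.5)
The plan is to use that $g$ is block upper triangular for the isotropic flag $U_1\subset U_1\oplus W^-\subset W$, and then remove the off-diagonal blocks by conjugating with unipotent isometries of degree $0$ that are the identity modulo $U_1$. First I record the block shape of $g$. Lemma \ref{lem first conjugation} gives $g(U_1)\subseteq U_1$. From ${}^tgJ=-Jg$ and $J(U_1)=U_2$ we get ${}^tg(U_2)\subseteq U_2$, as in claim (iv) of Lemma \ref{lem g-}. Dually, $g$ preserves the $(\cdot,\cdot)$-orthogonal complement $U_1\oplus W^-$ of $U_2$. So the only blocks that obstruct $g(U)\subseteq U$ and $g(W^-)\subseteq W^-$ are $g_{W^-,U_1}$, $g_{U_2,W^-}$ and $g_{U_2,U_1}$. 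The relation ${}^tgJ=-Jg$ identifies $g_{U_2,W^-}$ with $g_{W^-,U_1}$ up to conjugation by $J$.

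\emph{Step 1: kill $g_{W^-,U_1}$.} I look for $h_1=\id_W+X+X'+X''$ with $X\in\Hom(W^-,U_1)$, $X'\in\Hom(U_2,W^-)$ and $X''\in\Hom(U_2,U_1)$, all of degree $0$. As in the proof of Lemma \ref{lem first conjugation}, $X'$ is determined by $X$ through ${}^th_1Jh_1=J$, and $X''$ is solved from the remaining quadratic equation. Conjugating by $h_1$ replaces $g_{W^-,U_1}$ by $g_{W^-,U_1}+g_{U_1}X-Xg_{W^-}$ (to first order; the flag is preserved, so this is exact for this block). We must make this vanish.

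Since $U_1$ has the single basis vector $\Delta_k(d-k)$ in weight $d-k$, a degree-$0$ map $X$ is a family of linear functionals $x_k$ on the weight-$(d-k)$ part of $W^-$, with $X(w)=x_k(w)\Delta_k(d-k)$. Using $g(\Delta_k(d-k))=\Delta_{k+1}(d-k-1)$ for $k<r$ and $g(\Delta_r(d-r))=0$, the equation becomes
\[ x_k(w)-x_{k+1}(g_{W^-}w)=-c_{k+1}(w), \]
for $w\in W^-$ of weight $d-k$ and $0\le k\le r-1$. Here $c_{k+1}(w)$ is the $\Delta_{k+1}(d-k-1)$-coefficient of $g(w)$. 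There is no condition for $k=-1$, because $d$ is the maximal endpoint, so $W^-$ has no vectors of weight $d+1$. I set $x_r=0$ and solve downward in $k$ by $x_k(w):=x_{k+1}(g_{W^-}w)-c_{k+1}(w)$.

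After conjugating, the $W^-\to U_1$ block vanishes. The $U_2\to W^-$ block then vanishes automatically by the $J$-symmetry.

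\emph{Step 2: kill $g_{U_2,U_1}$.} I conjugate by $h_2=\id_W+Y$ with $Y\in\Hom(U_2,U_1)$ of degree $0$. For $h_2$ to lie in $H_\lambda$, $Y$ must satisfy the skew condition ${}^tYJ+JY|_{U_2}=0$ (to first order; $Y^2=0$ and $\langle U_1,U_1\rangle=0$, so this is exact). The block becomes $g_{U_2,U_1}+g_{U_1}Y-Yg_{U_2}$. I would again solve by the same weight recursion along the chain $\Delta_0(d)\to\cdots\to\Delta_r(d-r)$ and its dual chain in $U_2$. The difficulty is to arrange the solution to satisfy the symmetry constraint. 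I would try first to take any solution $Y_0$ and replace it by its projection $\tfrac12(Y_0-J^{-1}\,{}^tY_0J)$. The point is that the map $Y\mapsto g_{U_1}Y-Yg_{U_2}$ commutes with this projection, because ${}^tg_{U_2}$ corresponds to $-g_{U_1}$ under $J$. Also, the target $g_{U_2,U_1}$ already lies in the image of the symmetric part, since ${}^tgJ=-Jg$. If the recursion leaves a free constant at the end of the chain, as with $x_r=0$ in Step 1, I use that freedom to enforce the symmetry directly. I expect this step to be the main obstacle.

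\emph{Conclusion.} Put $h=h_1h_2\in H_\lambda$, where $h$ is of degree $0$ and invertible because it is unipotent. Then $g'=h^{-1}gh$ preserves $U$ and $W^-$. For $g'_{W^-}=g_{W^-}$, note the following. On $W^-$, $h\iota=\iota+(\text{map into }U_1)$, and $g$ preserves $U_1$. The element $h^{-1}$ acts as the identity modulo $U_1$ on $U_1\oplus W^-$. The projection $p$ to $W^-$ kills $U_1$. Hence $p\,h^{-1}gh\,\iota=p\,g\,\iota$.
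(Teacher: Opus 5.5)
Your argument is correct and is essentially the paper's proof, and your Step 1 is exactly its core. The paper likewise:
\begin{itemize}
\item conjugates by an isometry that is $\id+X$ on $U_1\oplus W^-$, with $X\colon W^-\to U_1$ of degree $0$ vanishing in weights $\le d-r$;
\item extends it to $H_\lambda$ by solving linearly for the $U_2\to W^-$ block and taking half the quadratic term for the $U_2\to U_1$ block;
\item obtains $(g')_{U_2,W^-}=0$ from ${}^tg'J+Jg'=0$ and $J(W^-)=W^-$.
\end{itemize}
The only difference is that the paper imports the existence of $X$ from the proof of \cite[Proposition II.6]{MW86}. Your downward recursion $x_k(w)=x_{k+1}(g_{W^-}w)-c_{k+1}(w)$ with $x_r=0$ proves it directly and correctly.

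Your Step 2 rests on a misreading and is not needed. The block $g_{U_2,U_1}$ maps $U_2$ into $U_1\subseteq U$, so it does not obstruct $g'(U)\subseteq U$. The lemma only requires $U=U_1\oplus U_2$ and $W^-$ to be invariant, not $U_1$ and $U_2$ separately. The same holds for its use in the proof of Theorem \ref{thm bad}, via $g'=(g')_U\oplus(g')_{W^-}$ with $(g')_U\in V^{\ast}_{\lambda'}$.

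So you can take $h=h_1$, and the step you flagged as the main obstacle disappears. For what it is worth, Step 2 can be carried out as you sketch. Because $r\le 2d$, the recursion there is underdetermined by exactly one constant, and your symmetrization trick then produces an admissible $Y$. But nothing in the statement requires it.
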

\begin{proof}
    By the proof of \cite[Proposition II.6]{MW86}, there exists an invertible endomorphism $h_{U_1\oplus W^-}\in \End(U_1 \oplus W^-)^{\deg =0}$ of the form
    \[ h_{U_1\oplus W^-}(\Delta_i(a))= \begin{cases}
        \Delta_i(a) &\text{ if }a\leq d-r,\\
        \Delta_i(a) +\lambda(i,a)\Delta_{d-a}(a) &\text{ if }a>d-r,
    \end{cases}\]
    where $\lambda(i,a)$ are some complex numbers, with the following property.
    Define 
    \[(g')_{U_1\oplus W^-}:=h_{U_1\oplus W^-}^{-1} \circ g_{U_1 \oplus W^-} \circ h_{U_1\oplus W^-}.\]
    Then
    \begin{itemize}
        \item $(g')_{U_1\oplus W^-}(U_1)\subseteq U_1$ and its restriction to $U_1$ is equal to $g_{U_1}$, and
        \item $(g')_{U_1\oplus W^-}(W^-)\subseteq W^-$ and its restriction to $W^-$ is equal to $g_{W^-}$.
    \end{itemize}
    Now we show that there exists a (not necessarily unique) extension $h\in H_{\lambda}$ of $h_{U_1\oplus W^-}$. Indeed, the equation ${}^th J h=J$ is equivalent to
    two equations 
    \begin{align*}
        \begin{cases}
            J_{W^{-}} h_{U_2,W^-}+ {}^t h_{W^-,U_1} J_{U_2,U_1}=0,\\
            J_{U_1,U_2} h_{U_2,U_1}+ {}^t h_{U_2,W^-} J_{W^-} h_{U_2,W^-} + {}^t h_{U_2,U_1} J_{U_2,U_1}=0.
        \end{cases}
    \end{align*}
Since $h_{W^-,U_1}$ is given and $J_{W^-}, J_{U_2,U_1}$ are invertible, the first equation uniquely determines $h_{U_2,W^-}$. Next, recall that ${}^t J= \epsilon J =J^{-1}$, which implies that ${}^tJ_{W^-}=\epsilon J_{W^-}$ and ${}^t J_{U_{2},U_1}^{-1}= \epsilon J_{U_1,U_2}$. Therefore, if we take
\[ h_{U_2,U_1}= \frac{- J_{U_2,U_1}^{-1} {}^t h_{U_2,W^-}^{} J_{W^-}^{} h_{U_2,W^-}^{}}{2},\]
then the second equation holds. This establishes the existence of an extension $h$.

Finally, it is a straightforward computation that $g':= h^{-1} g h$ sends $U_1$ to $U_1$, $W^-$ to $W^-$, and $(g')_{W^-}= g_{W^-}$ by the design of $h$. It remains to show that $g'(U_2) \subseteq U= U_1 \oplus U_2$. Since $J(W^-)=W^-$ and ${}^tg' J+Jg'=0$, we have
\[{}^t g'(W^-)= {}^t g' J(W^-)= J g'(W^-) \subseteq W^-.\]
Therefore, $(g')_{U_2,W^-}=0$. This completes the proof of the lemma.
    \end{proof}

\subsection{Proof of Theorem \ref{thm bad}}
With the preparation so far, we prove Theorem \ref{thm bad} now.

We apply the induction on rank. In particular, we assume that $\widehat{\phi^{\#}}= \AZ_{bad}(\phi^{\#})$. According to Lemmas \ref{lem first conjugation}, \ref{lem g-} and \ref{lem second conjugation}, there exists a $g' \in C_{\widehat{\phi}}^{\ast}$ such that 
\begin{itemize}
    \item [(1)] $g'(U)\subseteq U$ and $g'(W^-) \subseteq W^-$, and
    \item [(2)] $(g')_{W^-} \in C(f_{W^-})\cap V_{\lambda^{\#}}^{\ast}$.
\end{itemize}
By (1), we have $g'= (g')_{U} \oplus (g')_{W^-}$. Therefore, we have the following equation for the rank matrix 
\begin{align}\label{eq bad 0}
   r(\widehat{\phi})=r(g')= r( \iota_{U,W}\circ (g')_{U}\circ p_{W,U}) + r(\iota_{W^-,W}\circ (g')_{W^-}\circ p_{W,W^-}). 
\end{align}
Here we compose the inclusion and projection just for matching the size of the rank matrices.

On the other hand, recall that $f_{W^-}\in C_{\phi^{\#}}$. Thus (2) and the induction hypothesis imply that 
\begin{align}\label{eq bad 1}
     r((g')_{W}) \leq r( \widehat{\phi^{\#}})= r( \AZ(\phi^{\#})).
\end{align}
The same holds after composing with $\iota_{W^-,W}$ and $p_{W^-,W}$.

Next, define 
\[ \lambda':= \bigoplus_{i=0}^r (\lvert\cdot\rvert^{d-r} \oplus \lvert \cdot \rvert^{r-d}). \]
Then since $ (g')_{U}$ preserves the $\epsilon$-symmetric bilinear $J_{U}$, we see that $(g')_{U}$ lies in $V_{\lambda'}^{\ast}$. Example \ref{ex bad open} then implies that
\begin{align}\label{eq bad 2}
    r( (g')_{U}) \leq r( \phi_{[d-r,d],[-d,-d+r]} )
\end{align}
since $\phi_{[d-r,d],[-d,-d+r]}$ corresponds to the unique open orbit in $V_{\lambda'}$. The same holds after composing with $\iota_{U,W}$ and $p_{U,W}$. 

Now \eqref{eq bad 0}, \eqref{eq bad 1}, \eqref{eq bad 2} and Theorem \ref{thm LM25} imply that
\[r(\widehat{\phi}) \leq r( \phi_{\{[d-r,d],[-d,-d+r]\}} \oplus \AZ(\phi^{\#}))= r(\AZ(\phi)).  \]
Since $\phi \mapsto \widehat{\phi}$ and $\phi \mapsto \AZ(\phi)$ are both involutions on $\Phi_{\lambda}(G)$, we see that $\widehat{\phi}= \AZ(\phi)$ by Lemma \ref{lem involution poset}. This completes the proof of the theorem. \qed

\section{Pyasetskii involution and ABV-packets}\label{sec ABV}
In this section, we recall the background of ABV-packets and a conjecture that the Aubert-Zelevinsky involution sends an ABV-packet $\Pi_{\phi}^{\ABV}$ to the ABV-packet associated with its Pyasetskii involution $\Pi_{\widehat{\phi}}^{\ABV}$. Then we show that Theorem \ref{thm bad} is evidence of this conjecture.

In \cite{Art89}, Arthur formulated a series of influential conjectures concerning global and local Arthur packets. Let $F$ be a non-Archimedean field of characteristic 0. A local Arthur parameter $\psi$ for $G = \RG(F)$ is defined as a $G^{\vee}$-conjugacy class of admissible homomorphisms
\[
\psi: W_F \times \SL_2(\BC) \times \SL_2(\BC) \to {}^L G
\]
such that the restriction of $\psi$ to $W_F$ has bounded image. Associated to each such $\psi$, there is an $L$-parameter $\phi_{\psi}$ given by
\[
\phi_{\psi}(w, x, y) := \psi\left(w, x, \begin{pmatrix} |w|^{1/2} & \\ & |w|^{-1/2} \end{pmatrix}\right).
\]
Arthur conjectured that for every local Arthur parameter $\psi$ of $G$, one can attach a finite (multi-)set $\Pi_{\psi}$ of irreducible smooth representations of $G$, called the \emph{local Arthur packet}, which satisfies certain (twisted) endoscopic character relations. The members of each local Arthur packet are expected to arise as local components of square-integrable automorphic representations in global Arthur packets. Consequently, the local Arthur packets are expected to consist entirely of unitary representations. 

For real groups, Adams-Barbasch-Vogan (\cite{ABV92}) introduced a geometric framework for constructing local Arthur packets. Specifically, for each $L$-parameter $\phi$, they defined the ABV-packet (also known as the micro-packet) $\Pi_{\phi}^{\ABV}$, and conjectured that whenever the local Arthur packet $\Pi_{\psi}$ exists, it coincides with the corresponding ABV-packet: $\Pi_{\psi} = \Pi_{\phi_{\psi}}^{\ABV}$. This conjecture was subsequently established by Adams-Arancibia-Mezo (\cite{AAM24}) for a broad family of classical groups, in the cases where local Arthur packets are defined. Thus, the ABV-packets may be viewed as a generalization of local Arthur packets.

Over $p$-adic fields, Vogan extended the notion of ABV-packets in \cite{Vog93}, assuming the existence of an enhanced local Langlands correspondence. This definition was subsequently reformulated by Cunningham, Fiori, Moussaoui, Mracek, and Xu in \cite{CFMMX22}, and it is expected that these two approaches agree. For each $L$-parameter $\phi$, we consider the ABV-packet $\Pi_{\phi}^{\ABV}$ as defined in \cite[\S8]{CFMMX22}. We recall certain basic properties of $\Pi_{\phi}^{\ABV}$ from the geometric construction.

\begin{prop}[{\cite[Theorem 7.22(b),(d)]{CFMMX22}}]\label{prop ABV}
    Let $\phi \in \Phi_{\lambda}(G)$.
    \begin{enumerate}
        \item The ABV-packet $ \Pi_{\phi}^{\ABV}$ contains the $L$-packet $\Pi_{\phi}$.
        \item For any $\pi \in \Pi_{\phi}^{\ABV}$, we have $\phi_{\pi}\geq_C \phi$, where $\phi_{\pi}$ is the $L$-parameter of $\pi$ under the local Langlands correspondence.
    \end{enumerate}
\end{prop}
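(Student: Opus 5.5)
This proposition is cited from \cite[Theorem 7.22(b),(d)]{CFMMX22}, so the plan is to recall how both properties follow from the microlocal construction of $\Pi_{\phi}^{\ABV}$ rather than to prove anything new. Recall the setup. Through the enhanced local Langlands correspondence, each $\pi\in\Pi_\lambda(G)$ corresponds to a pair $(C_{\phi_\pi},\mathcal{L}_\pi)$, where $\mathcal{L}_\pi$ is an $H_\lambda$-equivariant local system on $C_{\phi_\pi}$. Let $\mathcal{P}(\pi)$ denote the associated simple perverse sheaf $\mathrm{IC}(C_{\phi_\pi},\mathcal{L}_\pi)$. Then $\pi\in\Pi_\phi^{\ABV}$ is defined to mean that the vanishing cycles functor $\mathrm{Evs}_{C_\phi}\mathcal{P}(\pi)$ is nonzero. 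Here $\mathrm{Evs}_{C_\phi}$ is the composite of restriction to the regular part of $\Lambda_{C_\phi}$ with vanishing cycles in the conormal direction, shifted so that the result is a local system.

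For (1), take $\pi\in\Pi_\phi$, so that $C_{\phi_\pi}=C_\phi$. Near a generic point $(x,\xi)\in\Lambda_{C_\phi}^{\mathrm{reg}}$, the sheaf $\mathcal{P}(\pi)$ is smooth along $C_\phi$, namely a shifted local system $\mathcal{L}_\pi[\dim C_\phi]$ on $C_\phi$ extended by zero in the normal directions. Its microlocal stalk at $\xi$ equals the stalk of $\mathcal{L}_\pi$ up to shift, which is the standard computation of vanishing cycles for a constant sheaf on a smooth submanifold against a function whose differential is conormal to it. This stalk is nonzero because $\mathcal{L}_\pi$ has positive rank. Hence $\mathrm{Evs}_{C_\phi}\mathcal{P}(\pi)\neq 0$, and therefore $\pi\in\Pi_\phi^{\ABV}$.

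For (2), I would use support considerations. The characteristic cycle of $\mathcal{P}(\pi)$ is supported on $\bigcup_{C\subseteq\overline{C_{\phi_\pi}}}\overline{\Lambda_C}$, because $\mathcal{P}(\pi)$ is supported on $\overline{C_{\phi_\pi}}$ and is constructible with respect to the orbit stratification. If $\mathrm{Evs}_{C_\phi}\mathcal{P}(\pi)\neq0$, then $\overline{\Lambda_{C_\phi}}$ lies in this support. Since the $\overline{\Lambda_C}$ are distinct irreducible components of $\Lambda_\lambda$, it follows that $C_\phi\subseteq\overline{C_{\phi_\pi}}$, which is exactly $\phi_\pi\geq_C\phi$.

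The main obstacle is making the microlocal statements in (1) rigorous, specifically the identification of the vanishing cycles stalk at a generic conormal covector with the stalk of the local system. That is where the detailed work of \cite{CFMMX22} is needed, and I would simply invoke it. For $\RO_{2n}$, the statements pass through the description in Remark~\ref{rmk even orthog Pyasetskii dual}, in which each orbit is a union of $H_\lambda^{\SO}$-orbits.
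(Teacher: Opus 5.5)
Your proposal is correct and takes the same approach as the paper, which gives no argument of its own and simply cites \cite[Theorem 7.22(b),(d)]{CFMMX22}. Your sketch is the standard content behind those two items: nonvanishing of $\mathrm{Evs}_{C}$ on $\mathcal{IC}(C,\mathcal{L})$, and the support condition. For (2) you could bypass characteristic cycles entirely, since vanishing cycles at $(x,\xi)$ depend only on the germ of $\mathcal{P}(\pi)$ at $x$, and $\mathcal{P}(\pi)$ vanishes near $C_\phi$ unless $C_\phi\subseteq\overline{C_{\phi_\pi}}$ (by $H_\lambda$-stability of $\overline{C_{\phi_\pi}}$). Your closing remark about $\RO_{2n}$ is unnecessary, since the statement, like \cite{CFMMX22}, concerns connected reductive $G$.
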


As in the real case, it is conjectured that local Arthur packets are special cases of ABV-packets. In line with \cite[\S8]{CFMMX22}, we refer to this expectation as Vogan's conjecture:

\begin{conj}\label{conj Vogan}
    Assume that the enhanced local Langlands correspondence for $G$ is established, as well as the theory of local Arthur packets. Then, for any local Arthur parameter $\psi$ of $G$, we have $\Pi_{\phi_{\psi}}^{\ABV}= \Pi_{\psi}$.
\end{conj}
Note that there is a more refined version of this statement asserting that the various distributions associated to local Arthur packets and ABV-packets also coincide; see \cite[Conjecture 8.3.1]{CFMMX22}. Vogan's conjecture for $p$-adic groups is proved for $G=\GL_n(F)$ by Cunningham-Ray in \cite{CR22, CR26}, and independently by Riddlesden (\cite{Rid23}) and the second name author (\cite{Lo24}). In an ongoing joint work with Cunningham-Liu-Ray-Xu(\cite{CHLLRX}), we will prove the refined version of Vogan's conjecture for quasi-split symplectic, orthogonal, and unitary groups.

A fundamental property of local Arthur packets for classical groups is their compatibility with the Aubert-Zelevinsky involution. Specifically, given a local Arthur parameter $\psi$, we define its dual $\widehat{\psi}$, by
\[
\widehat{\psi}(w, x, y) = \psi(w, y, x) \qquad \text{for all } w \in W_F,\ x, y \in \SL_2(\BC).
\]
This notation is compatible with the Pyasetskii involution in the sense that $\phi_{\widehat{\psi}} = \widehat{\phi_{\psi}}$ (\cite[Corollary 6.10]{CFMMX22}). For classical groups, it is known (see \cite[Lemma 4.4.1]{AGIKMS24}) that
\[
\Pi_{\widehat{\psi}} = \{ \widehat{\pi}\mid \pi \in \Pi_{\psi} \}.
\]
Motivated by Vogan's Conjecture, it is expected that an analogous compatibility holds for ABV-packets not of Arthur type as well. Furthermore, the Aubert-Zelevinsky involution is believed to be related to the Fourier transform, which itself is compatible with the construction of ABV-packets. See \cite[\S 10.2.8, 10.3.4]{CFMMX22}.

\begin{conj}[{\cite[Section 10.3.4]{CFMMX22}}]\label{conj AZ}
    For any $\phi \in \Phi_{\lambda}(G)$, we have 
    \[\Pi_{\widehat{\phi}}^{\ABV}=\{\widehat{\pi}\ | \ \pi \in \Pi_{\phi}^{\ABV}\}.\]
\end{conj}

We remark that $\Pi_{\widehat{\phi}}(G)$ and the set $\widehat{\Pi}_{\phi}(G) := \{\widehat{\pi} \mid \pi \in \Pi_{\phi}(G)\}$ need not coincide. When $G$ is a classical group and $\phi \in \Phi(G)$ is isotypic of bad parity, the $L$-packet $\Pi_{\phi'}(G)$ is a singleton for every $\phi' \in \Phi_{\lambda_{\phi}}(G)$ (Remark \ref{rmk gp}(2)).  In this situation, we show in the Proposition \ref{prop ABV AZ bad} below that Conjecture \ref{conj AZ} implies that $\Pi_{\widehat{\phi}}(G)= \widehat{\Pi}_{\phi}(G)$. This is indeed our motivation for Theorem \ref{thm bad}. In other words, Theorem \ref{thm bad} provides evidence for Conjecture \ref{conj AZ}.

\begin{prop}\label{prop ABV AZ bad}
Let $G$ be a connected reductive algebraic group defined and quasisplit over $F$.  Assume Conjecture \ref{conj AZ} holds for $G$. Suppose that $\lambda$ is an infinitesimal parameter of $G$ such that the $L$-packets $\Pi_{\phi}(G)$ are singletons for all $\phi \in \Phi_{\lambda}(G)$. Then
\[ \Pi_{\widehat{\phi}}(G)=\{ \widehat{\pi}\ | \ \pi \in \Pi_{\phi}(G)\}. \]
\end{prop}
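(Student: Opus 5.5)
Write $\Pi_{\phi}(G)=\{\pi_{\phi}\}$ for each $\phi\in\Phi_{\lambda}(G)$. The plan is to prove the claim by induction along the closure ordering on the finite set $\Phi_{\lambda}(G)$. Two facts drive it. First, every ABV-packet $\Pi_{\phi'}^{\ABV}$ with $\phi'\in\Phi_\lambda(G)$ is a subset of $\{\pi_{\phi''}\mid \phi''\geq_C\phi'\}$ and contains $\pi_{\phi'}$; this is Proposition \ref{prop ABV}. Second, the Aubert--Zelevinsky involution preserves the set $\Pi_\lambda:=\{\pi_{\phi'}\mid\phi'\in\Phi_\lambda(G)\}$. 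Granting the second fact, the map $\phi'\mapsto\phi_{\widehat{\pi_{\phi'}}}$ is an involution $\iota$ on $\Phi_{\lambda}(G)$. The goal is to show $\iota(\phi)=\widehat{\phi}$. I will show $\iota(\phi')\geq_C\widehat{\phi'}$ for all $\phi'$ and then apply Lemma \ref{lem involution poset} to the two involutions $\iota$ and $\phi'\mapsto\widehat{\phi'}$.

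For the inequality, fix $\phi'$. Conjecture \ref{conj AZ} gives $\Pi^{\ABV}_{\widehat{\phi'}}=\{\widehat{\pi}\mid \pi\in\Pi^{\ABV}_{\phi'}\}$. By Proposition \ref{prop ABV}(1), $\pi_{\phi'}\in\Pi^{\ABV}_{\phi'}$, so $\widehat{\pi_{\phi'}}\in\Pi^{\ABV}_{\widehat{\phi'}}$. Proposition \ref{prop ABV}(2) then gives $\phi_{\widehat{\pi_{\phi'}}}\geq_C\widehat{\phi'}$, which is exactly $\iota(\phi')\geq_C\widehat{\phi'}$. This step uses no induction at all.

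The remaining point, which I expect to be the only real obstacle, is that $\iota$ is well defined and is an involution on $\Phi_\lambda(G)$. That is, AZ must preserve the set of representations whose $L$-parameter has infinitesimal parameter $\lambda$. Involutivity follows from $\widehat{\widehat{\pi}}=\pi$ \cite{Aub95}, so only preservation needs an argument. I would deduce it from the conjecture itself. The ABV-packet $\Pi^{\ABV}_{\widehat{\phi'}}$ consists of representations attached to $\lambda$, since it is built from perverse sheaves on $V_\lambda$ and Proposition \ref{prop ABV}(2) places every member at a parameter in $\Phi_\lambda(G)$. The conjecture then puts $\widehat{\pi_{\phi'}}$ inside $\Pi^{\ABV}_{\widehat{\phi'}}$, hence inside $\Pi_\lambda$. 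Alternatively, one can use that AZ preserves cuspidal supports up to the Weyl group action, so the infinitesimal parameter is preserved. Either route gives that $\iota$ is a self-map of $\Phi_\lambda(G)$, and hence a bijective involution.

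Finally, Lemma \ref{lem involution poset} applied to $(\Phi_\lambda(G),\geq_C)$ with $\iota_1=\iota$ and $\iota_2=(\phi'\mapsto\widehat{\phi'})$ gives $\iota=\widehat{\,\cdot\,}$. Therefore $\phi_{\widehat{\pi_\phi}}=\widehat\phi$, which is $\Pi_{\widehat{\phi}}(G)=\{\widehat{\pi}\mid\pi\in\Pi_{\phi}(G)\}$.
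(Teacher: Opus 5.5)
Your proposal is correct and is essentially the paper's proof: define $\iota(\phi')$ as the $L$-parameter of $\widehat{\pi_{\phi'}}$, get $\iota(\phi')\geq_C\widehat{\phi'}$ from Conjecture \ref{conj AZ} together with Proposition \ref{prop ABV}(1),(2), and conclude with Lemma \ref{lem involution poset}. Your added justification that $\iota$ preserves $\Phi_\lambda(G)$ (a point the paper passes over) is sound, and the ``induction along the closure ordering'' mentioned in your opening is never used, so drop it.
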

\begin{proof}
For each $L$-parameter $\phi \in \Phi_{\lambda}(G)$, let $\pi_{\phi}$ denote the unique member of $\Pi_{\phi}(G)$ and let $\widetilde{\phi}$ be the $L$-parameter of $\widehat{\pi_{\phi}}$. Since the Aubert-Zelevinsky involution is an involution, the map $\phi \mapsto \widetilde{\phi}$ is an involution on $\Phi_{\lambda}(G)$.

By Proposition \ref{prop ABV}(1), $\pi_{\phi} \in \Pi_{\phi}^{\ABV}$. Conjecture \ref{conj AZ} then gives $\widehat{\pi_{\phi}} \in \Pi_{\widehat{\phi}}^{\ABV}$, so Proposition \ref{prop ABV}(2) implies $\widetilde{\phi} \geq_C \widehat{\phi}$ for all $\phi \in \Phi_{\lambda}(G)$. Lemma \ref{lem involution poset} now yields $\widehat{\phi} = \widetilde{\phi}$ for all $\phi \in \Phi_{\lambda}(G)$. This completes the proof of the proposition.
\end{proof}

\end{document}